\newcommand{\smldots}{%
\mathinner{{\ldotp}{\ldotp}{\ldotp}}%
}
\newcommand{\lp}{L^p(\mathbb{R}^n)}
\newcommand{\R}{\mathbb{R}}
\newcommand{\C}{\mathbb{C}}
\newcommand{\Cv}{\mathcal{C}}
\newcommand{\Sp}{\mathbb{S}}
\newcommand{\M}{\mathcal{M}_{k,n}}
\newcommand{\lqm}{L^q(\mathcal{M}_{k,n})}
\newcommand{\lqnm}{L^{n+1}(\mathcal{M}_{k,n})}
\newcommand{\Tx}{\mathcal{T}_{x'_0\!,\ldots,\!x'_n}\!}
\newcommand{\voln}{\text{det}}
\newcommand{\volk}{\Delta}
\newcommand{\one}{\mathbbm{1}} 
\newcommand{\tolrho}{\epsilon_\rho} 
\newcommand{\tolb}{\epsilon_b}
\newcommand{\delrho}{\delta_\rho}  
\newcommand{\delb}{\delta_b} 
\newcommand{\sgn}{\text{sgn}} 
\newtheorem{thm}{Theorem}
\newtheorem{defn}{Definition}
\newtheorem{prop}{Proposition}
\newtheorem{lem}{Lemma}
\newtheorem{cor}{Corollary}
\newtheorem*{nota}{Notation}
\begin{document}

\title {Uniqueness of extremizers for an endpoint inequality of the $k$-plane transform}
\author{Taryn C. Flock\thanks{The author was supported in part by NSF grant DMS-0901569}  }
\date{}

\maketitle

\begin{abstract}
    The $k$-plane transform is a bounded operator from $\lp$ to $L^q$ of the Grassmann manifold of all affine $k$-planes  in 
    $\R^n$ for certain exponents depending on $k$ and $n$. In the endpoint case $q=n+1$, we identify all extremizers of the 
	associated inequality for the general $k$-plane transform.
\end{abstract}

\begin{section}{Introduction}
	Let $\mathcal{G}_{k,n}$ be the Grassmann manifold of all $k$-planes in $\R^n$ passing through the origin and let 
	$\mathcal{M}_{k,n}$ be the Grassmann manifold of all affine $k$-planes in $\R^n$.  Parameterize $\M$ by $(\theta,y)$ where 
	$\theta\in\mathcal{G}_{k,n}$ and $y$ is in the $(n-k)$-dimensional subspace orthogonal to $\theta$, so that $(\theta,y)$ represents the affine $k$-plane, $\theta$ translated by $y$. Equip $\M$ with the product measure formed by pairing the unique Haar probability measure on $\mathcal{G}_{k,n}$, denoted $d\gamma(\theta)$, and Lebesgue measure on the $(n-k)$-dimensional subspace orthogonal to $\theta$, denoted $d\lambda_{\theta^\perp}(y)$. Let $d\lambda_\theta$ is Lebesgue measure on the $k$-plane $\theta$. \\ 
	\indent The $k$-plane transform in $\R^n$  is given by 
		$$T_{k,n}f(\theta,y)=\int_{x\in\theta}f(x+y)\;d\lambda_\theta(x). $$
	 When $k=n-1$ this is the Radon transform and when $k=1$ it is the X-ray transform. This operator is also called the $k$-plane transform in Euclidean space.  \\
	\indent The $k$-plane transform satisfies several inequalities (see \cite{C84} and \cite{BL97}). We are concerned with the $L^p(\R^n)$-$L^q(\M)$ inequality in the case that $q=n+1$, $p=\frac{n+1}{k+1}$: 
	\begin{equation}\label{main} 
		 \left( \int_{\mathcal{G}_{k,n}}\!\!\int_{\theta^\perp}|T_{k,n}f(\theta,y)|^{n+1}\;d\lambda_{\theta^\perp}(y)d\gamma(\theta) \right)^{1/(n+1)}
		 \leq A \|f\|_{L^{\frac{n+1}{k+1}}(\R^n)}.
	\end{equation} 
	This is an endpoint inequality in the sense that the $L^p(\R^n)$-$L^q(\M)$ inequalities satisfied by $T_{k,n}$ are, up to constant factors, precisely those that follow from interpolating \eqref{main} and the trivial $L^1(\R^n)$-$L^1(\M)$ inequality. 
	\begin{defn}  A function $f\in L^p(\R^n)$ is an extremizer of \eqref{main} if it has nonzero norm and satisfies 
		\begin{equation*} 
			\frac{\|T_{k,n}f\|_{\lqm} }{\|f\|_{L^p(\R^n)}}= \sup_{\{g:\|g\|_{L^p(\R^n)}\neq0\}} \frac{ \|T_{k,n}g\|_{\lqm}}{\|g\|_{L^p(\R^n)}}
		\end{equation*}
for $p=\frac{n+1}{k+1}$ and $q=n+1$. 
	\end{defn}
Extremizers and optimal constants have been determined for some of the most fundamental $L^p$ inequalities of
Fourier and real analysis. Among such achievements is the celebrated work of Lieb  \cite{HLS} on the Hardy-Littlewood-Sobolev inequality. In \cite{BL97}, Baernstein and Loss conjectured that $(1+|x|^2)^\frac{-(n-k)}{2(p-1)}$ is among the extremizers of the $L^p(\R^n)$-$L^q(\M)$ inequalities for the $k$-plane transform. They proved the $q=2$ case of the conjecture by relating the problem to the equivalent problem for the Hardy-Littlewood-Sobolev inequality.  As Lieb's work addressed both existence and uniqueness in this case, this proved that when $q=2$ all extremizers are of the form $c(\gamma+|x-a|^2)^{-(n+k)/2}$ for $c\in\C$, $\gamma>0$ and $a\in\R^n$. \\
\indent   For  $q=n+1$, the conjecture was proven for the Radon transform by Christ in \cite{C11} and for general $k$ by Drouot in \cite{D11}.  Christ also showed uniqueness: all extremizers of the endpoint inequality for the Radon transform are of the form $c(1+|\phi(x)|^2)^{-n/2}$ for $\phi$ an invertible affine endomorphism of $\R^n$, and all such functions are extremizers.  This paper extends the methods in \cite{C11} to the general $k$-plane transform.   Our main result is: 
	\begin{thm}\label{mainthm} 
		$f\in L^{(n+1)/(k+1)}(\R^n)$ is an extremizer of the inequality \eqref{main}  if and only if 
		\begin{equation*}
			f(x)=c(1+|\phi(x)|^2)^{-(k+1)/2}
		\end{equation*} 
		for some $c\in \C-\{0\}$ and some $\phi$ an invertible affine endomorphism of $\R^n$.
	\end{thm}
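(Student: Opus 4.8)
The plan is to follow Christ's treatment of the Radon transform in \cite{C11} and adapt each step to the general value of $k$. The ``if'' direction is quick: $(1+|x|^2)^{-(k+1)/2}$ is an extremizer by \cite{D11}, and the ratio in \eqref{main} is unchanged under $f(x)\mapsto c\,f(Lx+b)$ for $c\in\C-\{0\}$, $(L,b)\in GL(n,\R)\ltimes\R^n$ (the powers of $|\det L|$ produced on the two sides cancel by homogeneity of the norms), so every $c(1+|\phi(x)|^2)^{-(k+1)/2}$ is again an extremizer; the substance is the converse.

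So let $f$ be an extremizer; one exists by \cite{D11}. First I would reduce $f$ to a canonical form. Since $|T_{k,n}f(\theta,y)|\le T_{k,n}|f|(\theta,y)$ and $\||f|\|_{L^p}=\|f\|_{L^p}$, the function $|f|$ is also an extremizer, and strict convexity of $t\mapsto|t|^{n+1}$ forces, for a.e.\ $(\theta,y)$, $x\mapsto f(x+y)$ to have an a.e.\ constant argument on $\theta$; connectedness of $\M$ then gives $f=e^{i\alpha}|f|$ for a constant $\alpha$, so we may take $f\ge0$. Setting $\Phi(g)=\|T_{k,n}g\|_{\lqnm}^{n+1}$ and differentiating the problem of maximizing $\Phi$ over $\{\|g\|_{L^p}=1\}$, a nonnegative extremizer satisfies the Euler--Lagrange equation
\begin{equation*}
  T_{k,n}^{*}\!\big((T_{k,n}f)^{n}\big)=\lambda\, f^{p-1},\qquad \lambda>0,
\end{equation*}
where $T_{k,n}^{*}$ is the adjoint $k$-plane transform.

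Next, regularity: bootstrapping $f^{p-1}=\lambda^{-1}T_{k,n}^{*}((T_{k,n}f)^{n})$ against the smoothing and decay estimates for $T_{k,n}$ and $T_{k,n}^{*}$, I would show that $f$ is strictly positive, $C^{\infty}$, and comparable to $|x|^{-(k+1)}$ as $|x|\to\infty$. This is the step that genuinely differs from the $k=n-1$ case of \cite{C11}: a single application of $T_{k,n}$ or $T_{k,n}^{*}$ regularizes only a bounded amount (the composition $T_{k,n}^{*}T_{k,n}$ is essentially the Riesz potential of order $k$), so the iteration must be run enough times and the sharp decay rate extracted from the equation itself.

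Finally, uniqueness. Using translations, an element of $GL(n,\R)$ and a dilation, one may normalize so that $\int f^{p}x\,dx=0$, $\int f^{p}x_ix_j\,dx=c\,\delta_{ij}$ for some $c>0$, and $\|f\|_{L^p}=1$, leaving only the compact group $O(n)$ unused; it then remains to prove that such a normalized extremizer equals $(1+|x|^2)^{-(k+1)/2}$. This is the heart of the argument, and I expect it to be the main obstacle. Two routes present themselves. (i) A rigidity/classification statement: every strictly positive $C^{\infty}$ solution of the Euler--Lagrange equation with $|x|^{-(k+1)}$ decay is an affine image of the standard bubble --- provable, in principle, by a moving-plane argument for the nonlinear integral equation, by a symmetrization argument built on the rearrangement inequalities of \cite{BL97} together with a sharp analysis of their equality cases, or by a competing-symmetries iteration. (ii) A nondegeneracy argument: compute the second variation of $\Phi$ at the bubble on $\{\|g\|_{L^p}=1\}$, show that the null space of the linearized operator is exactly the tangent space of the symmetry orbit, so that extremizers near the bubble lie on that orbit, and then globalize using the connectedness and compactness of the set of normalized extremizers furnished by the concentration-compactness behind \cite{D11}. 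Either way the crux is understanding the linearization of $T_{k,n}^{*}((T_{k,n}\cdot)^{n})$ about the extremizer --- equivalently, the spectrum of the associated eigenvalue problem --- sharply enough to rule out a null direction transverse to the $\big(GL(n,\R)\ltimes\R^n\big)\times\C^{*}$-orbit.
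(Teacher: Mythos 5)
Your outline diverges from the paper at exactly the two points where the real work lies, and at both there is a genuine gap. First, the regularity step: you assert that bootstrapping the Euler--Lagrange equation against smoothing estimates yields a strictly positive $C^\infty$ extremizer with $|x|^{-(k+1)}$ decay. For the Radon transform ($k=n-1$) smoothness of extremizers was known before \cite{C11}, but for general $k$ it is not --- extremizers of \eqref{main} are not even known to be continuous, and no such bootstrap exists in the literature (the composition $T_{k,n}^*T_{k,n}$ is only a Riesz potential of order $k$, and nothing forces the iteration to close). The paper is structured precisely to avoid this: the entire symmetrization argument is rerun for merely measurable extremizers, using Lebesgue points of the radius function $\rho(x',s)$, the notion of almost Lebesgue convex level sets, and the construction of a good representative $\tilde f$ with open convex superlevel sets. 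The only use made of the Euler--Lagrange equation is the a.e.\ positivity of nonnegative extremizers.

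Second, and more seriously, your classification step is a menu of candidate strategies (moving planes, competing symmetries, second-variation nondegeneracy, or ``symmetrization with a sharp analysis of equality cases''), none of which is carried out. The last of these is the route the paper actually takes, and executing it is the content of the paper: write $\|T_{k,n}f\|_{\lqnm}^{n+1}$ via Drury's identity; slice $\R^n=\R^k\times\R^{n-k}$ so the inner integral is a Brascamp--Lieb--Luttinger form in the vertical variables; deduce equality in the BLL inequality for a.e.\ choice of parameters and reduce to superlevel sets by the layer-cake formula; then, because the number of functions ($n+1$) no longer matches the dimension ($n-k$) as it does for the Radon transform, prove an adapted Burchard theorem via a ``permissibility'' condition showing the extra $n-k$ factors are redundant; construct positive-measure families of parameters for which permissibility and equality hold simultaneously; conclude that the $(n-k)$-dimensional cross-sections of level sets are translates and dilates of one fixed ellipsoid with affinely varying centers; and finally use convexity, scaled skew reflections for every $\varphi\in O(n)$, and conjugation of a compact subgroup of the affine group into $O(n)$ to obtain radiality up to an affine map, after which Drouot's conditional theorem finishes. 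None of these ingredients appears in your proposal, and the alternatives you list are not known to work here: the spectral/nondegeneracy analysis of the linearized operator is unavailable, and moving planes is problematic because the symmetry group is the full affine group, so an extremizer need not be symmetric about any point until after the very affine normalization you are trying to establish.
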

Uniqueness up to composition with affine maps is expected because of the symmetries of the problem. 
\begin{defn} Let $\varphi:\R^n\to\R^n$ be a function for which there exists a closed set $E\subset\R^n$ with $|E|=0$ such that $\varphi\in C^1(\R^n\setminus E)$ and $\varphi:\R^n\setminus E\to \R^n\setminus E$ is a bijection. Define $\mathcal{J}:L^p(\R^n)\to L^p(\R^n)$ by $\mathcal{J}f= |J_\varphi|^{1/p}(f\circ\varphi)$ where $|J_\varphi|$ is the Jacobian determinant of $\varphi$. Such a transformation is a symmetry of \eqref{main}  if  
$$ \|T_{k,n}\mathcal{J}(f)\|_{\lqm}=\|T_{k,n}f\|_{\lqm}.$$ 
\end{defn}
As  $\|f\|_{L^p(\R^n)}=\|\mathcal{J}(f)\|_{L^p(\R^n)}$, if $\mathcal{J}$ is a symmetry of \eqref{main} and $f$ is an extremizer of \eqref{main} then $\mathcal{J}(f)$ is also an extremizer of \eqref{main}. Composition with any invertible affine map is a symmetry of \eqref{main} (see \cite{D11}). That the set of  symmetries  of the endpoint inequality is in fact larger is crucial in the existence proof  in \cite{D11}, and is used to determine that $c(1+|x|^2)^{-(k+1)/2}$ is a radial extremizer. Sections 3 and 4 each give an interpretation of the additional symmetry.  \\
 \indent The proof of Theorem \ref{mainthm} has two main steps. The first, done by Drouot in \cite{D11}, is to show that extremizers exist and that $f=c(1+|x|^2)^{-(k+1)/2}$ is a radial nonincreasing extremizer. Drouot further proved the conditional result that if every extremizer of \eqref{main} has the form $f\circ\phi$ for $f$ a radial nonincreasing extremizer and $\phi$ an affine map, then all extremizers have the the form required in Theorem \ref{main}.  This paper concerns the second step, showing that the conditional step holds -- that any extremizer of \eqref{main} has the form $f\circ\phi$ for $f$ a radial nonincreasing extremizer and $\phi$ an affine map. \\
\indent Our analysis is modeled on that of Christ in \cite{C11}. The proof is similar to that for the Radon transform given in \cite{C11}, but the change in dimension presents two difficulties. The result in \cite{C11} relies on Burchard's theorem regarding cases of equality in the Riesz rearrangement inequality \cite{B96},\cite{Bthesis}. For the Radon transform the theorem proved in \cite{Bthesis} applies directly, but this result must be adapted before it applies for the $k$-plane transform case.  This is dealt with in \S 2.2. \\
\indent  Secondly, while in the case of the Radon transform it was known before \cite{C11} that extremizers of the endpoint inequality are smooth, in the general case they are not yet even known to be continuous. We modify the methods of \cite{C11} to apply to functions that are only assumed to be measurable. This takes the bulk of \S 2.3-2.4. Section 2.5 finishes the proof of Theorem \ref{mainthm}. \\
\indent Central to the analysis is a multilinear form (Drury's identity) that gives the $L^q$ norm of the $k$-plane transform.  A related multilinear form has been studied by Valdimarsson using similar methods in \cite{V12}.  As in Valdimarsson's case there is a certain amount of geometric invariance that allows us to immediately extend our result for the $k$-plane transform Euclidean space to the $k$-plane transform in elliptic space.  This transform was originally introduced by Funk \cite{F13}. See Helgason (for instance \cite{H99}) for the modern perspective. The question of  $L^p$-$L^q$ inequalities for the $k$-plane transform in elliptic space has been considered by Strichartz \cite{S81}, Christ \cite{C84}, and Drury \cite{D89}.  \\
\indent The $k$-plane transform in elliptic space is defined as follows.  Let $F$ be a function defined on $\mathcal{G}_{1,n}$, the set of lines through the origin in $\R^{n}$. Let $\pi\in\mathcal{G}_{k,n}$ be a $k$-plane passing through the origin  in $\R^{n}$. There is a unique probability Haar measure on the space of lines through the origin contained in $\pi$ analogous to that for $\mathcal{G}_{1,k}$. This measure will be denoted by $d\gamma_\pi$. The $k$-plane transform in elliptic space is given by
\begin{equation*}
T^E_{k,n}F(\pi)=\int_{\theta\subset\pi} F(\theta)\;d\gamma_{\pi}(\theta).
\end{equation*} 
Christ \cite{C84} proves that there exists a finite indeterminate constant $A_E$ such that for all $f\in L^p(\R^n)$,  
\begin{equation}\label{mainSph} 
		 \left( \int_{\mathcal{G}_{k,n}}\!\!\!\!\!\!|T^E_{k,n}F(\pi)|^{n}d\gamma(\pi) \right)^{1/n}\!\!\!\!\!\!\!\!
		 \leq A_E \left( \int_{\mathcal{G}_{1,n}}|F(\theta)|^{\frac{n}{k}}d\gamma(\theta) \right)^{\frac{k}{n}}.
	\end{equation} 
Assign coordinates on $\mathcal{G}_{1,n}$, losing a null set, by identifying each unit vector $\theta$ in the northern hemisphere with the line it spans. For a linear map $L$, $L(\theta)$ is the image of the unit vector $\theta$ under the map $L$. The main result of Section 3 is:
\begin{thm}   \label{SphThm} 
$F\in L^{\frac{n}{k}}(\mathcal{G}_{1,n})$ is an extremizer of the inequality \eqref{mainSph}  if and only if 
		\begin{equation*}
			F(\theta)=c\left|L(\theta)\right|^{-k} 
		\end{equation*} 
		 for some $c\in \C-\{0\}$ and some invertible linear endomorphism $L$ of $\R^{n}$. 
	\end{thm}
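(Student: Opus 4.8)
The plan is to reduce Theorem~\ref{SphThm} to Theorem~\ref{mainthm} by gnomonic (central) projection, which exhibits the elliptic $k$-plane transform on $\mathcal{G}_{1,n}$ as a weighted copy of the Euclidean $(k-1)$-plane transform on $\R^{n-1}$. (We may assume $2\le k\le n-1$, the remaining cases being degenerate.) This reduction is also the ``interpretation of the additional symmetry'' promised for Section~3: the extra invariance of \eqref{mainSph} is exactly the projective action of $SO(n)$ inherited from the sphere.

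First I would fix coordinates. Identify $\mathcal{G}_{1,n}$, up to a null set, with the open upper hemisphere $\mathbb{S}^{n-1}_+\subset\R^n$ as in the paper, and let $\Phi:\mathbb{S}^{n-1}_+\to\R^{n-1}$ be gnomonic projection onto the tangent hyperplane at $e_n$, with inverse $\Phi^{-1}(x)=(x,1)(1+|x|^2)^{-1/2}$. Computing the Jacobian of $\Phi$ gives $d\gamma(\theta)=c_n(1+|x|^2)^{-n/2}\,dx$, so that $\mathcal U F(x):=c_n^{k/n}(1+|x|^2)^{-k/2}F(\Phi^{-1}(x))$ defines a bijective isometry $\mathcal U:L^{n/k}(\mathcal{G}_{1,n})\to L^{n/k}(\R^{n-1})$. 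The key geometric point is that, for $\pi\in\mathcal{G}_{k,n}$ not contained in $\{x_n=0\}$, $\Phi$ carries $\pi\cap\mathbb{S}^{n-1}_+$ onto an affine $(k-1)$-plane $\Psi(\pi)\in\mathcal{M}_{k-1,n-1}$, and $\Psi$ is a diffeomorphism away from null sets. Restricting $\Phi$ to $\pi$ and computing the induced Jacobian, one finds $d\gamma_\pi(\theta)=h(\pi)(1+|x|^2)^{-k/2}\,d\lambda_{\Psi(\pi)}(x)$ for $x=\Phi(\theta)$, where $h(\pi)$ depends only on $\pi$ and the weight $(1+|x|^2)^{-k/2}$ is exactly the one absorbed into $\mathcal U$. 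Hence $T^E_{k,n}F(\pi)=h(\pi)\int_{\Psi(\pi)}\mathcal U F\,d\lambda_{\Psi(\pi)}=h(\pi)\bigl(T_{k-1,n-1}\mathcal U F\bigr)(\Psi(\pi))$; in operator form, $T^E_{k,n}=\mathcal V\circ T_{k-1,n-1}\circ\mathcal U$ with $\mathcal V G(\pi):=h(\pi)\,G(\Psi(\pi))$.

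The substantive step is to show that $\mathcal V:L^{n}(\mathcal{M}_{k-1,n-1})\to L^{n}(\mathcal{G}_{k,n})$ is a nonzero scalar multiple of an isometry, equivalently that the density of $|h\circ\Psi^{-1}|^{n}\,\Psi_*(d\gamma)$ with respect to the natural measure $d\mu$ on $\mathcal{M}_{k-1,n-1}$ is constant. I would prove this by invariance: $SO(n)$ acts on $\mathcal{G}_{k,n}$ preserving $d\gamma$, and on $\mathcal{M}_{k-1,n-1}$ by the projective transformations $g\mapsto\Psi\circ g\circ\Psi^{-1}$; both $\Psi_*(d\gamma)$ and $d\mu$ have explicit transformation laws under this action, as does the fiber weight $h$, and combining them shows the density in question is invariant under an action that is transitive off null sets, hence constant, the constant being pinned down by one evaluation. (Alternatively, this compatibility is implicit in the equivalence of \eqref{mainSph} with the corresponding Euclidean inequality in \cite{D89},\cite{C84}.) Granting it, $\|T^E_{k,n}F\|_{L^n(\mathcal{G}_{k,n})}=\Lambda\,\|T_{k-1,n-1}\mathcal U F\|_{L^n(\mathcal{M}_{k-1,n-1})}$ for a fixed $\Lambda>0$, while $\|F\|_{L^{n/k}(\mathcal{G}_{1,n})}=\|\mathcal U F\|_{L^{n/k}(\R^{n-1})}$, so $F$ is an extremizer of \eqref{mainSph} if and only if $\mathcal U F$ is an extremizer of the endpoint inequality for $T_{k-1,n-1}$ on $\R^{n-1}$, i.e.\ of \eqref{main} with $n,k$ replaced by $n-1,k-1$.

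By Theorem~\ref{mainthm} with $n$ replaced by $n-1$ and $k$ by $k-1$, this occurs exactly when $\mathcal U F(x)=c(1+|\psi(x)|^2)^{-k/2}$ for some $c\in\C-\{0\}$ and an invertible affine $\psi:\R^{n-1}\to\R^{n-1}$, i.e.\ $F(\Phi^{-1}(x))=c'(1+|x|^2)^{k/2}(1+|\psi(x)|^2)^{-k/2}$. The remaining check is purely algebraic: for an invertible linear $L:\R^n\to\R^n$ with $A$ its first $n-1$ columns and $b$ its last, one has $|L(\theta)|^2=(1+|x|^2)^{-1}|Ax+b|^2$ for $\theta=\Phi^{-1}(x)$, and completing the square gives $|Ax+b|^2=\alpha(1+|\psi_L(x)|^2)$ with $\psi_L$ invertible affine and $\alpha=\operatorname{dist}(b,\operatorname{range}A)^2$, which is strictly positive precisely because $L$ is invertible; moreover $L\mapsto(\psi_L,\alpha)$ is onto. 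Hence $c'(1+|x|^2)^{k/2}(1+|\psi(x)|^2)^{-k/2}$ and $c''|L(\theta)|^{-k}$ describe the same family of functions, and Theorem~\ref{SphThm} follows. I expect the main obstacle to be precisely the compatibility statement that $\mathcal V$ is a multiple of an isometry --- reconciling the base Jacobian of $\Psi$ with the fiber weight $h$; the Jacobian computation for $\Phi$, the algebraic identification of the two extremizer families, and the bookkeeping of the null sets discarded in the identifications should all be routine.
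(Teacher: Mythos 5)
Your proposal is correct and is essentially the paper's own argument: the paper likewise transfers the elliptic inequality to the Euclidean endpoint inequality by central projection (lifting $f$ on $\R^n$ to $F(\theta)=\theta_{n+1}^{-(k+1)}f(\mathcal{S}^{-1}(\theta))$ on $\mathcal{G}_{1,n+1}$, which is your map $\mathcal U$ read in the opposite direction), uses Drury's identity $d\gamma(\pi)=c\,b(\Pi)^{-(n+1)}d\mu(\Pi)$ together with the fiber Jacobian to get the exact cancellation you call the isometry property of $\mathcal V$, and then converts the extremizers of Theorem~\ref{mainthm} via the same algebraic identity $1+|\phi(x)|^2=|L(x,1)|^2$. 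The only cosmetic differences are the index shift from projecting down to $\R^{n-1}$ rather than lifting from $\R^n$, and your optional $SO(n)$-invariance derivation of the measure identity, which the paper simply cites from Drury.
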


\indent  Section 4 concerns a third variant of the $k$-plane transform,  $T_{k,n}^\sharp$. Denote the space of $k\times (n-k)$ matrices by $Mat(k,n-k)$. Let $f:\R^n\to\C$, $A\in Mat(k,n-k)$ and $b\in\R^{(n-k)}$. Then $T_{k,n}^\sharp f$ is given by:
	\begin{equation*}
		T_{k,n}^\sharp f(A,b)=\int_{\R^k}f(x',A(x')+b)dx'. 
	\end{equation*}
We view $T_{k,n}^\sharp f(A,b)$ as a function on $\R^{(k+1)(n-k)}$ by identifying $Mat(k,n-k)\times\R^{(n-k)}$ with $\R^{(k+1)(n-k)}$ by first identifying $Mat(k,n-k)$ with $\R^{n-k}\times\ldots\times\R^{n-k}$.  As usual, equip $\R^{(k+1)(n-k)}$ with Lebesgue measure. The main result of section 4 is:
\begin{thm}\label{TTs} 
There exists a finite constant $A^\sharp\in \R_+$ such that for all $f\in L^p(\R^n)$ 
	\begin{equation}	\label{mains}
		 \left( \int_{\R^{k(n-k)}}\int_{\R^{n-k}}|T_{k,n}^\sharp f(A,b)|^qdAd b \right)^{1/q}\leq A^\sharp \|f\|_{\lp}.
	\end{equation}
Further, $f\in\lp$ is an extremizer of \eqref{mains} if and only if it is an extremizer of \eqref{main}. 
\end{thm}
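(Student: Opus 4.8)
The plan is to show that $T_{k,n}^\sharp$ is simply the $k$-plane transform $T_{k,n}$ written out in an affine coordinate chart on the Grassmann manifold $\M$, and that the Jacobian factors produced by this change of coordinates cancel \emph{exactly} --- precisely because the exponents are $p=\frac{n+1}{k+1}$ and $q=n+1$ --- leaving an honest identity
\[
  \|T_{k,n}^\sharp f\|_{L^{n+1}(\R^{(k+1)(n-k)})}=\kappa_{k,n}\,\|T_{k,n}f\|_{\lqm}
\]
valid for \emph{every} $f\in\lp$, where $\kappa_{k,n}>0$ depends only on $k$ and $n$. Granting this, Theorem~\ref{TTs} is immediate: finiteness of $A^\sharp=\kappa_{k,n}A$ is inherited from finiteness of the constant $A$ in \eqref{main}, and the two extremizer problems coincide because the Rayleigh quotients $\|T_{k,n}^\sharp f\|/\|f\|_{\lp}$ and $\|T_{k,n}f\|_{\lqm}/\|f\|_{\lp}$ differ by the fixed factor $\kappa_{k,n}$ for every $f\neq 0$.

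To set this up I would first note that every affine $k$-plane whose orthogonal projection onto the first $k$ coordinates of $\R^n$ is surjective can be written in exactly one way as $\{(x',A(x')+b):x'\in\R^k\}$ with $A\in Mat(k,n-k)$ and $b\in\R^{n-k}$, and that the $k$-planes violating this condition form a null set for the measure $d\gamma(\theta)\,d\lambda_{\theta^\perp}(y)$ on $\M$. Let $G(A)$ denote the Gram determinant of the parameterization $x'\mapsto(x',A(x'))$ of this plane --- equivalently $G(A)=\det(I+A^{T}A)=\det(I+AA^{T})$. A direct change of variables on the plane gives the pointwise identity
\[
  T_{k,n}^\sharp f(A,b)=G(A)^{-1/2}\,T_{k,n}f\bigl(\theta(A),\,y(A,b)\bigr),
\]
where $\theta(A)\in\mathcal{G}_{k,n}$ is the direction of the plane and $y(A,b)\in\theta(A)^\perp$ is the orthogonal translation carrying $\theta(A)$ onto it; the factor $G(A)^{-1/2}$ is exactly the discrepancy between integrating over the plane against $k$-dimensional Lebesgue measure and against the base variable $x'$.

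The next step is to compute the Jacobian of the map $(A,b)\mapsto(\theta(A),y(A,b))$ between the two parameter spaces. Using the standard formula for the invariant measure on $\mathcal{G}_{k,n}$ in an affine chart, $d\gamma(\theta(A))=c_{k,n}\,G(A)^{-n/2}\,dA$, together with the elementary identity $d\lambda_{\theta^\perp}(y)=G(A)^{-1/2}\,db$ (parameterize $\theta(A)^\perp\cong\R^{n-k}$ by $v$, note $b=(I+AA^{T})v$, and compare Gram determinants), one obtains
\[
  d\gamma(\theta)\,d\lambda_{\theta^\perp}(y)=c_{k,n}\,G(A)^{-(n+1)/2}\,dA\,db.
\]
Now raise the pointwise identity to the power $n+1$ and integrate in $(A,b)$: the weight $G(A)^{-(n+1)/2}$ coming from the integrand cancels identically against the weight $G(A)^{(n+1)/2}$ that appears when $dA\,db$ is re-expressed through $d\gamma\,d\lambda_{\theta^\perp}$. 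Since the excluded planes are negligible, this gives $\|T_{k,n}^\sharp f\|_{L^{n+1}}^{n+1}=c_{k,n}^{-1}\,\|T_{k,n}f\|_{\lqm}^{n+1}$, which is the asserted norm identity with $\kappa_{k,n}=c_{k,n}^{-1/(n+1)}$.

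There is no deep obstacle here --- the content of the theorem is this change of variables --- so what requires care is the execution. One must pin down all normalizations and transpose conventions and verify the affine-chart formula $d\gamma=c_{k,n}\,G^{-n/2}\,dA$ for the Haar measure on $\mathcal{G}_{k,n}$ (this is the source of the exponent $n/2$, hence of the exact cancellation). One must check that the ``vertical'' planes genuinely form a $d\gamma\,d\lambda_{\theta^\perp}$-null set, so that the affine chart captures the full $L^{n+1}(\M)$ norm. And one should arrange the manipulations so that they are valid for a merely measurable $f$: the identity for $T_{k,n}^\sharp f$ holds pointwise wherever the defining integral converges absolutely, and applying Tonelli's theorem to $|f|$ disposes of the measurability and integrability issues. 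The exact cancellation of the Gram-determinant weights is not a coincidence --- it is the analytic reflection of the fact, already used in \cite{D11}, that at these endpoint exponents the inequality \eqref{main} is invariant under composition with invertible affine maps.
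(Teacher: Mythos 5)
Your proposal is correct, but it takes a genuinely different route from the paper. The paper proves the norm identity (its Lemma \ref{TTsheq}) by expanding $\|T_{k,n}^\sharp f\|_{L^q}^q$ as an $(n+1)$-fold multilinear integral, changing variables $s_j=A(x_j')+b+t_j$, and evaluating an approximate-identity (delta-function) limit; this forces the hypotheses that $f$ be nonnegative and continuous, matches the result against the rearranged Drury identity of Lemma \ref{DL2}, and then Theorem \ref{TTs} follows by an approximation argument plus the reduction to $|f|$. You instead treat $(A,b)$ as an affine chart on $\M$ and work at the level of the parameter space: the pointwise relation $T_{k,n}^\sharp f(A,b)=G(A)^{-1/2}\,T_{k,n}f(\theta(A),y(A,b))$ with $G(A)=\det(I+A^{T}A)$, together with $d\gamma(\theta(A))=c_{k,n}G(A)^{-n/2}dA$ and $d\lambda_{\theta^\perp}(y)=G(A)^{-1/2}db$ (your computation $b=(I+AA^{T})v$ is the right one), gives exact cancellation of the weights precisely at $q=n+1$, hence $\|T_{k,n}^\sharp f\|_{L^{n+1}}=\kappa_{k,n}\|T_{k,n}f\|_{\lqm}$ for every measurable $f$ via Tonelli, with no continuity assumption and no approximation step; the extremizer equivalence is then immediate from proportional Rayleigh quotients, whereas the paper must first reduce to nonnegative extremizers. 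The two facts you flag as needing verification are indeed the only real obligations: the chart density exponent $n/2$ (obtainable, e.g., from the Gaussian column-span construction, where $A=X_2X_1^{-1}$ has matrix-Cauchy density proportional to $\det(I+A^{T}A)^{-n/2}$) and the $\gamma$-nullity of the ``vertical'' planes; both are standard but should be recorded. What your route buys: an explicit constant, validity for all of $L^p$ in one stroke, and a conceptual explanation of why the endpoint exponent produces the cancellation. What the paper's route buys: it expresses $\|T_{k,n}^\sharp f\|^q$ through the same multilinear form $\Tx$ used throughout, keeping the three realizations of inequality \eqref{main} on a uniform footing.
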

Again, this is an extension of a result in \cite{C11} where Theorem \ref{TTs} is proved in the case that $k=n-1$.

\begin{nota} Where appropriate we identify functions $f\in L^p$ with the equivalence class of functions that are equal to $f$ almost everywhere.\\
\indent In Sections $2$ through $5$ the values of $p$ and $q$ will be fixed: $p=\frac{n+1}{k+1}$ and $q=n+1$. This convention is broken in Section 6, where more general $q$ are considered. \\
\indent We use $\R_+$ to denote the set of positive real numbers.  Let $E$ be a Lebesgue measurable set. $|E|$ will be the Lebesgue measure of $E$.  When $|E|>0$, $E^*$ will be the open ball centered at $0$ such that $|E|=|E^*|$. When $|E|=0$, $E^*$ will denote the empty set.  We use $\one_E$ to denote the indicator function of the set $E$. By the phrase ``$E=F$ up to a null set'' we mean that the symmetric difference of $E$ and $F$ has measure zero. The symmetric difference of two sets will be denoted by $\Delta$. Thus, $A\Delta B=(A\setminus B)\cup (B\setminus A)$.\\
\indent $\R^n=\R^{k}\times\R^{n-k}$ with coordinates $x'\in\R^{k}$ and $v\in\R^{n-k}$. We use $f_{x'}(v)=f(x',v)$ to discuss functions with the horizontal variable fixed. We also use $E(x',s)=\{v: f_{x'}(v)>s\}$ to denote the superlevel sets of these functions. Following the above notation, $E^*(x',s) $  is the open ball in $\R^{n-k}$ centered at 0 such that  $|E^*(x',s)|=|E(x',s)|$. \\  
\indent $\delta_{i,j}$ denotes the Kronecker delta. \\
\indent Lastly,  we have several notions of volume.  If $(x_0,\ldots,x_{k})$ is a generic point in $\R^{n(k+1)}$, $\pi(x_0,\ldots,x_k)$ will be the unique $k$-plane in $\R^n$ determined by $x_0,\ldots,x_k$ and $\voln(x_0,\ldots,x_k)$ will be the $k$-dimensional volume of the simplex determined by $x_0,\ldots,x_k$ in $\R^n$. We let $x'$ be the projection of $x\in\R^n$ onto $\R^k$ and  $\volk(x'_0,\ldots,x'_k)$ be the $k$-dimensional volume of the simplex formed by $x_0', \ldots, x_k'$ in  $\R^k$ . 
\end{nota} 
\end{section}  

\begin{section}{The $k$-plane transform in Euclidean space}

\indent Our analysis relies heavily on four results from the literature (which require three definitions to state). 
\begin{lem}[Drury's Identity, \cite{D83}] Let $f\in L^p(\R^n)$ be a nonnegative function. There exists $C\in\R_+$ depending only on $n$ and $k$ such that 
	\begin{multline*}
		 \left\|T_{k,n}f\right\|_{\lqm}^{q} \\ =C \int \prod_{i=0}^kf(x_i) \left( \prod_{i=k+1}^{n}\int_{\pi(x_0,\ldots,x_k)}\!\!\!\!\!\!\!\!\!\!\!\!\!\!\!\!\!\! f(x_i)\;d\sigma \right) 
		\voln^{(k-n)}(x_0,\ldots,x_k)\;dx_0\ldots dx_k
	\end{multline*}
where $\voln^{(k-n)}(x_0,\ldots,x_k)$ is the $k$-dimensional volume of the simplex determined by $x_0,\ldots,x_k$ in $\R^n$ raised to the power $(k-n)$ and $d\sigma$ is the surface measure on $\pi(x_0,\ldots,x_k)$.
\end{lem}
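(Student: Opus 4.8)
\emph{Proof proposal.} The plan is to start from the definition of the norm, expand the $(n+1)$st power by Tonelli (legitimate since $f\ge 0$), and then recognize the resulting integral over the affine Grassmannian $\M$ as an instance of the affine Blaschke--Petkantschin formula of integral geometry. Writing $E=(\theta,y)$ for the affine $k$-plane parametrized by $(\theta,y)$, $\lambda_E$ for Lebesgue measure on $E$, and $\mu$ for the product measure on $\M$ described in the introduction, nonnegativity of $f$ gives $|T_{k,n}f|=T_{k,n}f$ and hence
$$\|T_{k,n}f\|_{\lqm}^{n+1}=\int_{\M}\Big(\int_E f\,d\lambda_E\Big)^{n+1}d\mu(E).$$

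Next I would use the splitting $n+1=(k+1)+(n-k)$ to expand the power while peeling off the first $k+1$ copies:
$$\|T_{k,n}f\|_{\lqm}^{n+1}=\int_{\M}\int_{E^{k+1}}\prod_{i=0}^{k}f(x_i)\,\Big(\int_E f\,d\lambda_E\Big)^{n-k}d\lambda_E(x_0)\cdots d\lambda_E(x_k)\,d\mu(E).$$
For $\mu$-a.e.\ $E$ and $\lambda_E^{k+1}$-a.e.\ tuple $(x_0,\ldots,x_k)\in E^{k+1}$ the points $x_0,\ldots,x_k$ are affinely independent, so $E=\pi(x_0,\ldots,x_k)$ and the factor $\big(\int_E f\,d\lambda_E\big)^{n-k}$ may be rewritten as $\prod_{i=k+1}^{n}\int_{\pi(x_0,\ldots,x_k)}f(x_i)\,d\sigma(x_i)$. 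The key step is then to invoke the affine Blaschke--Petkantschin formula: for nonnegative measurable $g$ on $(\R^n)^{k+1}$,
$$\int_{\M}\int_{E^{k+1}}g\,d\lambda_E(x_0)\cdots d\lambda_E(x_k)\,d\mu(E)=c_{n,k}\int_{(\R^n)^{k+1}}g\,\voln^{(k-n)}(x_0,\ldots,x_k)\,dx_0\cdots dx_k,$$
where $c_{n,k}$ is a finite constant depending only on $n$ and $k$ (absorbing the normalization of $\mu$ relative to the motion-invariant measure and the factorial relating the simplex volume to the volume of the parallelepiped spanned by $x_1-x_0,\ldots,x_k-x_0$). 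Applying this with $g(x_0,\ldots,x_k)=\prod_{i=0}^{k}f(x_i)\prod_{i=k+1}^{n}\int_{\pi(x_0,\ldots,x_k)}f\,d\sigma$ yields the asserted identity with $C=c_{n,k}$.

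I expect the substantive point to be the Blaschke--Petkantschin change of variables itself: establishing that the map $(E,x_0,\ldots,x_k)\mapsto(x_0,\ldots,x_k)\in(\R^n)^{k+1}$ has Jacobian equal, up to a dimensional constant, to $\voln(x_0,\ldots,x_k)^{n-k}$, and checking that the parametrization of $\M$ by $(k+1)$-tuples of affinely independent points is measure-theoretically faithful away from the null set of degenerate tuples. This is classical and is essentially the content of \cite{D83}. Everything else—expanding the power and identifying $E$ with $\pi(x_0,\ldots,x_k)$—is routine, and Tonelli guarantees that both sides of the stated identity are simultaneously finite or infinite, so no integrability hypothesis beyond $f\in L^p(\R^n)$ is required.
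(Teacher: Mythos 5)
Your outline is correct. The paper does not prove this lemma at all --- it is quoted directly from Drury \cite{D83} as one of the background results --- so there is no internal proof to compare against; your reduction to the affine Blaschke--Petkantschin formula is the standard derivation and is essentially the content of the cited source. Splitting the exponent as $n+1=(k+1)+(n-k)$, identifying $E$ with $\pi(x_0,\ldots,x_k)$ off the null set of affinely dependent tuples, and applying the Blaschke--Petkantschin change of variables (whose Jacobian $\voln^{\,n-k}$, inverted, produces the weight $\voln^{(k-n)}$, with the constant absorbing the normalization of the product measure $d\gamma\,d\lambda_{\theta^\perp}$ on $\M$) yields exactly the stated identity, and Tonelli is justified throughout by nonnegativity of $f$.
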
 

\begin{defn}
	 Let $f$ be any measurable function on $\R^n$ such that all superlevel sets $\{x:|f(x)|>t\}$ for $t>0$ have finite measure.  Define $f^*$ the (symmetric nonincreasing) rearrangement of $f$ to be the function 
	\begin{equation*}
		f^*(x)=\int_0^\infty \one_{\{|f(x)|>t\}^*}(x)\;dt. 
	\end{equation*}
\end{defn}
$f_y^*(v)$ will denote the rearrangement of the function $f_y(v)=f(y,v)$ where $y\in\R^{n-k}$ is fixed.  It is a standard fact (see for instance \cite{LLtext}) that  $\|f\|_{L^p}=\|f^*\|_{L^p}$.

\begin{thm}[Brascamp, Lieb, and Luttinger's  rearrangement  inequality, \cite{BLL}] 
Let $f_i(x)$ for $1\leq i\leq m$  be nonnegative measurable functions on $\R^n$, and let $a_{i,j}$ for $1\leq i\leq m$ and $1\leq j\leq k$ be real numbers. Then 
\begin{equation*}
\int_{\R^{nk}}\prod_{i=1}^{m}f_i(\sum_{j=1}^{k}a_{i,j}x_j)\;dx_1\ldots dx_k\leq \int_{\R^{nk}}\prod_{i=1}^{m}f_i^*(\sum_{j=1}^{k}a_{i,j}x_j)\;dx_1\ldots dx_k.
\end{equation*} 
\end{thm}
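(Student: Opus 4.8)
I would prove this classical rearrangement inequality along the standard route: reduce to indicator functions, then to dimension $n=1$, where it becomes a statement about Lebesgue measures of slab-intersections amenable to a continuous symmetrization argument. \emph{Reduction to indicators.} Using the layer-cake identity $f_i(x)=\int_0^\infty \one_{\{f_i>t_i\}}(x)\,dt_i$, substituting into the left-hand side, expanding the product, and invoking Tonelli's theorem, one rewrites the left-hand side as $\int_{(0,\infty)^m} I\big(\{f_1>t_1\},\dots,\{f_m>t_m\}\big)\,dt_1\cdots dt_m$, where $I(E_1,\dots,E_m):=\int_{\R^{nk}}\prod_i\one_{E_i}\big(\sum_j a_{ij}x_j\big)\,dx_1\cdots dx_k$; since $\{f_i^*>t\}=\{f_i>t\}^*$ up to a null set, it suffices to prove $I(E_1,\dots,E_m)\le I(E_1^*,\dots,E_m^*)$ for arbitrary measurable $E_i\subset\R^n$ of finite measure. (When the coefficient array is degenerate some of these integrals may be $+\infty$; the reductions below respect the convention $\infty\le\infty$, so this causes no trouble, and in the cases of interest both sides are finite anyway.)

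\emph{Reduction to $n=1$.} Fix a unit vector $e\in\R^n$ and let $S_e$ denote Steiner symmetrization in the direction $e$: decomposing $\R^n=\R e\oplus e^\perp$, replace each fiber $E\cap(\R e+w)$, $w\in e^\perp$, by the centered interval of the same one-dimensional length, so that $|S_eE|=|E|$. Writing $x_j=t_je+u_j$ with $t_j\in\R$ and $u_j\in e^\perp$, one has $\sum_j a_{ij}x_j\in E_i$ if and only if $\sum_j a_{ij}t_j$ lies in the fiber $E_i\cap(\R e+w_i)$, where $w_i=\sum_j a_{ij}u_j$; hence for each fixed $u\in(e^\perp)^k$ the inner $t$-integral is a one-dimensional instance of the inequality, and integrating the one-dimensional bound over $u$ (Tonelli again) gives $I(E_1,\dots,E_m)\le I(S_eE_1,\dots,S_eE_m)$. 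I would then invoke the classical fact that finitely many finite-measure sets can be driven simultaneously to their symmetric rearrangements by a common sequence of Steiner symmetrizations: since $A\mapsto \sum_i\int_{\R^n}|x|^2\,\one_{A_i}(x)\,dx$ is non-increasing under a common Steiner symmetrization and is minimized, for fixed measures, exactly when every $A_i$ is a centered ball, a greedy choice of directions yields $S_{e^{(N)}}\cdots S_{e^{(1)}}E_i\to E_i^*$ in symmetric difference for every $i$; because $I$ is multilinear and each $\one_{E_i}$ enters through a fixed bounded substitution, $I$ is continuous under $L^1$-convergence of the $\one_{E_i}$, so the $\R^n$ inequality follows from the $n=1$ case.

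\emph{The one-dimensional case.} Now $E_i\subset\R$, and after an inner approximation (monotone convergence) I may assume each $E_i$ is a finite union of bounded open intervals. Let every connected component of every $E_i$ slide toward the origin at unit speed, merging with a neighbor upon collision; this yields measure-preserving families $E_i(s)$, $s\ge0$, with $E_i(0)=E_i$ and $E_i(s)=E_i^*$ once $s$ is large, and it remains to show $s\mapsto I(E_1(s),\dots,E_m(s))$ is non-decreasing. Differentiating at a generic $s$ and using that the moving boundary of a component $[a,b]$ contributes a gain at the endpoint nearer $0$ and an equal loss at the farther endpoint, $\tfrac{d}{ds}I$ becomes a sum over $i$ and over the components of $E_i(s)$ of the quantity $G_i(c_{\mathrm{in}})-G_i(c_{\mathrm{out}})$, where $G_i(c):=\int_{\{x:\,\sum_j a_{ij}x_j=c\}}\prod_{i'\ne i}\one_{E_{i'}(s)}\big(\sum_j a_{i'j}x_j\big)\,d\sigma$ and $|c_{\mathrm{in}}|\le|c_{\mathrm{out}}|$. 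The crux of the whole theorem is showing these terms are $\ge0$ — morally, that each slab cross-section functional $G_i$ is larger nearer the origin — which is where the partial symmetrization already accomplished along the flow must be exploited; Brascamp, Lieb, and Luttinger establish it by an induction on the total number of intervals together with a limiting argument. This monotonicity is the only genuine obstacle; the two preceding reductions are routine, and once the one-dimensional inequality is in hand the general statement follows.
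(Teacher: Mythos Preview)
The paper gives no proof of this statement; it is quoted from \cite{BLL} and used throughout as a black box. Your sketch is essentially the original Brascamp--Lieb--Luttinger argument: layer-cake reduces to indicators, fiberwise application of the one-dimensional case shows $I$ is nondecreasing under Steiner symmetrization in any direction, iterated Steiner symmetrizations drive the sets to centered balls, and in dimension one a continuous interval-sliding flow finishes the job, with the monotonicity of that flow being the only substantive point --- for which you rightly defer to \cite{BLL}.

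Two places in your write-up are a bit soft. First, the second-moment Lyapunov argument you give for simultaneous convergence of iterated Steiner symmetrizations to balls is incomplete as stated: convergence of $\sum_i\int |x|^2\one_{A_i}$ to its infimum does not by itself force $L^1$ convergence of the $A_i$ to balls, and ``greedy choice of directions'' needs to be made precise. One typically either cites the known fact that a single dense-and-recurrent sequence of directions works for every finite-measure set, or argues via compactness after truncating to bounded sets. Second, the $L^1$-continuity of $I$ you invoke can fail when the coefficient array $(a_{i,j})$ is degenerate (then $I$ may be identically $+\infty$ and certainly need not depend continuously on the sets); the standard cure is to prove the inequality first for bounded $E_i$, where finiteness and continuity are immediate, and then recover the general case by monotone convergence. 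Neither issue is serious, and with these routine patches your outline is correct.
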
 

\begin{defn} Define 
	\begin{equation*}
		\mathcal{I}(E_0,\ldots,E_{m})= \int\left(\prod_{i=1}^{m}\one_{E_i}(x_i)\right) \one_{E_{0}}(x_1-\sum_{i=2}^{m}x_i)\;dx_1\ldots dx_{m}.
	\end{equation*}
\end{defn}

\begin{defn} A set of positive numbers $\{\rho_i\}_{i=0}^{m}$ is strictly admissible if they satisfy this 
	generalization of the triangle inequality:
	\begin{equation*}
	 \sum_{\stackrel{j=0}{j \neq i}}^{m}\rho_j>\rho_i\text{     for all } i\in[0,m]. \\
	\end{equation*}
\end{defn}

\begin{thm}[Burchard's theorem for indicator functions, \cite{B96}, \cite{Bthesis}]
	 Let $m\geq 2$. Let $ E_i$ for $i\in[0,m]$ be sets of finite positive measure in $\R^n$. Denote by  $\rho_i$ the radii of the $E_i^*$. If the family $\{\rho_i\}_{i=1}^m$ is strictly admissible and 
	\begin{equation*}
		\mathcal{I}(E_0,\ldots,E_{m})=\mathcal{I}(E^*_0,\ldots,E^*_{m})
	\end{equation*}
	then,  for each $i\in[0,m]$ there exist vectors $c_i\in\R^n$ and  numbers $\alpha_i\in\R_+$  such that $\displaystyle \sum_{i=1}^{m}c_i=c_{0}$, and there exists a fixed ellipsoid $\mathcal{E}\subset\R^{n}$ centered at the origin, such that  up to sets of measure zero 
	\begin{equation*}
		E_i=c_i+\alpha_i\mathcal{E}.
	\end{equation*}
\end{thm}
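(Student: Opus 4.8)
The inequality $\mathcal{I}(E_0,\ldots,E_m)\le\mathcal{I}(E_0^*,\ldots,E_m^*)$ is itself a special case of the Brascamp--Lieb--Luttinger inequality: take the $m+1$ functions $\one_{E_i}$, $m$ integration variables, and coefficients $a_{i,j}=\delta_{i,j}$ for $1\le i\le m$ together with $a_{0,1}=1$ and $a_{0,j}=-1$ for $2\le j\le m$. So the theorem is an \emph{equality-case} statement, and the plan is to extract geometric rigidity from the symmetrization procedure that proves the inequality. The tool I would use is continuous Steiner symmetrization: for each direction $e\in\Sp^{n-1}$ there is a one-parameter family $S^e_t$, $t\in[0,\infty]$, with $S^e_0=\mathrm{id}$ and $S^e_\infty$ the ordinary Steiner symmetrization about $e^\perp$, and applying the Brascamp--Lieb--Luttinger inequality on almost every line parallel to $e$ shows that $t\mapsto\mathcal{I}(S^e_tE_0,\ldots,S^e_tE_m)$ is nondecreasing.

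Next I would invoke the standard fact that there is a sequence of directions $e_1,e_2,\ldots$ along which the iterated symmetrizations $S^{e_N}_\infty\cdots S^{e_1}_\infty E_i$ converge in measure to $E_i^*$, simultaneously for all $i=0,\ldots,m$. Since $\mathcal{I}$ is continuous with respect to convergence in measure of sets of bounded measure, and is monotone along this sequence while agreeing at the two ends by hypothesis, it must be \emph{constant} along the whole sequence; refining the choice of directions, one gets invariance of $\mathcal{I}$ under $S^e_t$ for every $t\in[0,\infty]$ and every $e$ in a countable dense subset of $\Sp^{n-1}$, hence for all $e$ by continuity.

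The crux is then the rigidity analysis of a single invariant direction. I would show: if $\mathcal{I}(E_0,\ldots,E_m)=\mathcal{I}(S^e_tE_0,\ldots,S^e_tE_m)$ for all $t$ and the radii $\rho_i$ are strictly admissible, then for almost every line $\ell$ parallel to $e$ the slices $E_i\cap\ell$ are intervals, the midpoint of $E_i\cap\ell$ depends affinely on the foot of $\ell$ in $e^\perp$, and the affine function attached to $E_0$ is the sum of those attached to $E_1,\ldots,E_m$. Strict admissibility enters decisively here: it is what forces the integrand of $\mathcal{I}$ to be genuinely coupled, so that the one-dimensional Brascamp--Lieb--Luttinger inequality along $\ell$ is \emph{strict} unless each slice is already an interval at the rearranged position (if one admissibility constraint degenerated, a set could be moved freely and the conclusion would fail). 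Running this over the dense set of directions upgrades ``interval slices in direction $e$'' to convexity of each $E_i$, and intersecting the affine midpoint relations over all directions identifies each $E_i$ as a translate of an ellipsoid; matching the slice-length profiles $r\mapsto|\{x\in E_i:\langle x,e\rangle=r\}|$, which are likewise symmetrization-invariant, across $i$ forces these ellipsoids to be common dilates, i.e.\ $E_i=c_i+\alpha_i\mathcal{E}$, and the midpoint-compatibility relation is precisely $\sum_{i=1}^m c_i=c_0$.

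The main obstacle I expect is this single-direction rigidity lemma: it needs a quantitative, stability-type refinement of the one-dimensional Riesz/Brascamp--Lieb--Luttinger inequality together with careful bookkeeping of how strict admissibility keeps all $m+1$ sets mutually coupled. A secondary difficulty is the soft analysis surrounding it — establishing simultaneous convergence of iterated Steiner symmetrizations to the symmetric rearrangement for a finite family of sets, and supplying the continuity and approximation arguments needed to descend from the symmetric limit back to each intermediate stage and then to arbitrary directions.
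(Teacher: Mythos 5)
This statement is not proved in the paper at all: it is one of the four background results quoted from the literature (Burchard, \cite{B96}, \cite{Bthesis}), so there is no in-paper proof to compare against. Judged on its own terms, your outline does track the strategy used in Burchard's work: monotonicity of the multilinear functional under (continuous) Steiner symmetrization, convergence of iterated Steiner symmetrizations to the symmetric rearrangement simultaneously for all the sets, and a reduction to an equality analysis in a fixed direction in which strict admissibility is what keeps all $m+1$ sets coupled. The soft steps you describe (fiberwise application of the one-dimensional inequality, continuity of $\mathcal{I}$ under convergence in measure, starting the universal sequence of directions with an arbitrary prescribed $e$) are all standard and can be carried out.

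As a proof, however, there is a genuine gap: the two steps that carry essentially all of the content are asserted rather than established. First, the single-direction rigidity lemma --- that invariance of $\mathcal{I}$ under $S^e_t$ forces almost every slice parallel to $e$ to be an interval whose midpoint depends \emph{affinely} on the foot of the line in $e^\perp$, with the affine functions satisfying the additive compatibility relation --- is precisely the delicate equality-case analysis of the one-dimensional Riesz/Brascamp--Lieb--Luttinger inequality and its globalization across fibers; you name it as the main obstacle, and invoking an unspecified ``quantitative stability-type refinement'' does not supply it (Burchard's argument is a direct structural analysis, not a stability estimate, and in particular one must rule out degenerate configurations where some slices are null or where the coupling degenerates on a set of lines of positive measure). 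Second, the passage from ``interval slices with affinely varying midpoints in every direction'' to ``each $E_i$ is, up to a null set, $c_i+\alpha_i\mathcal{E}$ for a single ellipsoid $\mathcal{E}$'' is itself a nontrivial characterization theorem for ellipsoids; ``intersecting the affine midpoint relations over all directions'' and ``matching slice-length profiles'' gesture at it but do not prove it, and this is where a substantial part of Burchard's work lies. So your proposal is a faithful plan of the known route to the theorem, but the theorem's actual content is deferred to two unproven lemmas rather than derived.
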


\begin{thm}[Drouot, \cite{D11}] Let $1\leq k\leq n-1$. Assume that any extremizer $f\in\lp$ for the $k$-plane transform inequality \eqref{main} can be written $f\circ \phi$ with $f$ a radial nonincreasing extremizer and $\phi$ an invertible affine map. Then any extremizer can be written 
	$$f=c(1+|\phi(x)|^2)^{-(k+1)/2}$$
with $c\in\C$ and $\phi$ an invertible affine map. 
\end{thm}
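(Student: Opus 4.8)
The plan is to use the hypothesis to reduce everything to identifying the radial nonincreasing extremizers, and then to pin those down using the enlarged symmetry group of \eqref{main} produced in \cite{D11}. First I would note that it suffices to show that every radial nonincreasing extremizer $g$ has the form $g(x)=c(1+\lambda|x|^2)^{-(k+1)/2}$ with $c,\lambda>0$: then, since $\sqrt\lambda\,\phi$ is again an invertible affine endomorphism, any extremizer $g\circ\phi$ equals $c(1+|\sqrt\lambda\,\phi(x)|^2)^{-(k+1)/2}$, which is the claimed form. The extension to complex $c$ is the usual phase reduction, since $|T_{k,n}f|\le T_{k,n}|f|$ pointwise forces $|f|$ to be an extremizer and equality in the triangle inequality then forces $\arg f$ to be almost everywhere constant.

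The engine is the family of ``special projective'' symmetries. For $v\in\R^n$ put $\varphi_v(x)=x/(1+v\cdot x)$; one computes $|J_{\varphi_v}(x)|=|1+v\cdot x|^{-(n+1)}$, so that with $p=\frac{n+1}{k+1}$ the induced map $\mathcal{J}_vf(x)=|1+v\cdot x|^{-(k+1)}f\!\left(x/(1+v\cdot x)\right)$ is, by \cite{D11} (the $k$-plane transform intertwines projective transformations of $\R^n\subset\R P^n$), a symmetry of \eqref{main}. Hence for a radial nonincreasing extremizer $g=G(|x|)$, each $\mathcal{J}_vg$ is an extremizer, so by hypothesis $\mathcal{J}_vg=g_v\circ\phi_v$ with $g_v$ radial nonincreasing and $\phi_v$ invertible affine; in particular every superlevel set $\{\mathcal{J}_vg>s\}$ is, up to a null set, an ellipsoid, and all of these are concentric dilates of one fixed ellipsoid. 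On the other hand $\mathcal{J}_vg\ge0$ everywhere and $\mathcal{J}_vg(x)=|1+v\cdot x|^{-(k+1)}G\!\left(|x|/|1+v\cdot x|\right)$, so $\{\mathcal{J}_vg>s\}=\{\,|x|^2<\Phi_s(|1+v\cdot x|)\,\}$ with $\Phi_s(t)=t^2\big(G^{-1}(st^{k+1})\big)^2$; this set is symmetric across the hyperplane $\{v\cdot x=-1\}$ and invariant under rotations about the line $\R v$, which forces the common center of the ellipsoids to be the foot of the perpendicular from the origin to $\{v\cdot x=-1\}$ and their axis of revolution to be $\R v$.

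Equating the two descriptions on the half-space $H_v^+=\{1+v\cdot x>0\}$, in coordinates $a=v\cdot x/|v|$ and $x^\perp$, shows that $\Phi_s(1+|v|a)-a^2$ coincides with a fixed-shape piece of ellipsoid, hence is a quadratic polynomial in $a$ whose top two coefficients do not depend on $s$ (center and shape being fixed), valid on an $a$-interval that expands to all of $\R$ as $s\to0$. (Here one must first exclude that $g$ is compactly supported: if $g$ were supported in $B(0,R_0)$, taking $|v|>1/R_0$ makes $\{\mathcal{J}_vg>0\}=\varphi_v^{-1}(B(0,R_0))$ an unbounded proper subset of $\R^n$, whereas $\bigcup_{s>0}\{\mathcal{J}_vg>s\}=\bigcup_{s>0}\phi_v^{-1}(B(0,r_v(s)))$ is either a bounded ellipsoid or all of $\R^n$, a contradiction.) Thus $\Phi_s(t)=Pt^2+P_1t+P_0(s)$ on $(0,\infty)$ with $P,P_1$ independent of $s$; substituting $\Phi_s(t)=t^2F(st^{k+1})$ with $F=(G^{-1})^2$ and $u=st^{k+1}$ gives $F(u)=P+P_1s^{1/(k+1)}u^{-1/(k+1)}+P_0(s)s^{2/(k+1)}u^{-2/(k+1)}$ for $u$ ranging over the full open range of $G$. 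Since the left side is independent of $s$ and $1,u^{-1/(k+1)},u^{-2/(k+1)}$ are linearly independent, $P_1=0$ and $P_0(s)s^{2/(k+1)}$ equals a constant $P_0'$, so $F(u)=P+P_0'u^{-2/(k+1)}$. Monotonicity of $G$, together with $F(0^+)=+\infty$ (full support) and $F(\sup G)=0$, forces $P_0'>0$ and $P<0$; inverting gives $G(r)=c(1+\lambda r^2)^{-(k+1)/2}$ with $\lambda=-1/P$ and $c=(P_0'/|P|)^{(k+1)/2}$, which completes the proof.

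The step I expect to be the main obstacle is extracting this functional equation rigorously from the bare geometric input that every superlevel set of $\mathcal{J}_vg$ is an ellipsoid: one has to verify that the interval on which $\Phi_s$ is recognized as a quadratic genuinely exhausts $(0,\infty)$ (equivalently, that $u$ exhausts the range of $G$) as $s\to0$ --- which is exactly where ruling out compact support of $g$ is needed --- and one must track carefully which quantities depend on $s$, which depend on $v$, and which are universal, the collapse $P_1=0$ being the point where the $s$- and $v$-dependence interact. A cleaner but less robust route is to differentiate $\mathcal{J}_vg=g_v\circ\phi_v$ in $v$ at $v=0$, which formally produces the ODE $(k+1)G(r)+(r+\beta r^{-1})G'(r)=0$ and hence the same conclusion; I would avoid this because it presupposes differentiability of $G$ and of $v\mapsto(g_v,\phi_v)$, which is not available here.
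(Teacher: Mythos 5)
This statement is not proved in the paper at all: it is imported verbatim from Drouot \cite{D11}, where it is established by a competing-symmetries argument in the style of Carlen--Loss. Your route is therefore genuinely different, and in outline it is sound. The reduction to classifying radial nonincreasing extremizers is the right one, and the engine works: each $\mathcal{J}_vg$ is an extremizer (this is not literally in \cite{D11}, which uses only the single inversion $J$, but it follows from the elliptic correspondence of Section 3 together with the singular value decomposition, since every projective map is a composition of affine maps and maps pulled back from rotations of the sphere, and both classes are symmetries of \eqref{main}); the hypothesis then makes the superlevel sets of $\mathcal{J}_vg$ concentric homothetic ellipsoids of revolution about $\R v$, which is exactly what your quadratic representation of $\Phi_s$ and the ensuing functional equation need. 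The exclusion of compactly supported $g$ and the final inversion to $G(r)=c(1+\lambda r^2)^{-(k+1)/2}$ are fine (you should also dispose of the case $\sup G=\infty$, where the same equation forces $P=0$ and $G(r)=cr^{-(k+1)}\notin L^{(n+1)/(k+1)}(\R^n)$).

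Two concrete corrections. First, the set $\{|x|^2<\Phi_s(|1+v\cdot x|)\}$ is \emph{not} symmetric across the hyperplane $\{v\cdot x=-1\}$ (the reflection preserves $|1+v\cdot x|$ but not $|x|$), and the common center is not the foot of the perpendicular: for $g(x)=(1+|x|^2)^{-(k+1)/2}$ one computes $\mathcal{J}_vg(x)=\bigl((1+v\cdot x)^2+|x|^2\bigr)^{-(k+1)/2}$, whose superlevel sets are centered at $-v/(1+|v|^2)$, not at $-v/|v|^2$. This claim is fortunately not load-bearing: your derivation only uses that the center and shape are independent of $s$, which the hypothesis supplies, and the deduction $P_1=0$ then recovers the true center a posteriori. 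Second, and more substantively, your parenthetical claim that for fixed $v$ the variable $u=s|1+v\cdot x|^{k+1}$ exhausts the range of $G$ as $s\to0$ is false even when $g$ has full support: the slice-nonemptiness condition reads $G^{-1}(u)>|t-1|/(t|v|)$ with $t=(u/s)^{1/(k+1)}$, so as $s\to0$ it tends to $G^{-1}(u)>1/|v|$, and the attainable $u$ fill only (roughly) $(0,G(1/|v|))$, which is strictly inside $(0,\sup G)$; in the model case the cap is $(|v|^2/(1+|v|^2))^{(k+1)/2}$. So a single $v$ does not determine $F$ on its whole domain; you must let $|v|\to\infty$ and match the constants $P(v),P_0'(v)$ on the nested overlapping $u$-intervals (linear independence of $1$ and $u^{-2/(k+1)}$ makes this matching immediate), alongside the $s$-overlap propagation you already flag. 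With these repairs your argument closes and yields a self-contained alternative to Drouot's proof of the conditional theorem.
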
 

The main idea of the proof of Theorem \ref{mainthm} is that if $f\in\lp$ is an extremizer of \eqref{main}, then $f$ produces equality in an inequality of the type addressed by Brascamp, Lieb, and Luttinger.   Although the cases of equality in general are not well understood, we are able to show that Burchard's work to applies to our case. This allows us to deduce that any extremizer is, up to composition with an affine map, a nonincreasing radial function. Our theorem then follows from Drouot's.  Our goal will be the following proposition:

\begin{prop}\label{radial}
	For any nonnegative extremizer $f\in L^p(\R^n)$ of \eqref{main}  there exists $\phi$ an invertible affine
	 transformation of $\R^n$, such that $f=F\circ\phi$ for $F$ some radial nonincreasing function $F:\R^n\to[0,\infty)$.
\end{prop}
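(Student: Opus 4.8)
\textit{Strategy.} The plan is to follow the rearrangement argument announced after Drouot's theorem: use Drury's identity to recognize the extremal problem as an equality case of the Brascamp--Lieb--Luttinger inequality, apply the (suitably adapted) Burchard theorem to pin down the shape of the superlevel sets of $f$, and then glue this information together. Throughout, $f\ge 0$ denotes the nonnegative extremizer under consideration.

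\textit{From Drury's identity to a Brascamp--Lieb--Luttinger equality.} First I would recast Drury's identity in a usable form. For each $i>k$, parametrize the point $x_i$ ranging over the $k$-plane $\pi(x_0,\dots,x_k)$ by affine (barycentric) coordinates relative to $x_0,\dots,x_k$; the surface measure $d\sigma$ then contributes a Jacobian proportional to $\voln(x_0,\dots,x_k)$, and the $n-k$ such Jacobians exactly cancel the factor $\voln^{(k-n)}(x_0,\dots,x_k)$ present in the identity. Pulling the barycentric integrations outside yields
\begin{equation*}
\|T_{k,n}f\|_{\lqm}^{q}=C\int_{(\R^{k})^{n-k}}\Bigl(\int_{(\R^{n})^{k+1}}\prod_{i=0}^{k}f(x_i)\prod_{i=k+1}^{n}f\bigl(\textstyle\sum_{j=0}^{k}t^{(i)}_{j}x_j\bigr)\,dx_0\cdots dx_k\Bigr)\,dt,
\end{equation*}
where, for each value of the parameter $t$, the inner integral is exactly of the type to which the Brascamp--Lieb--Luttinger inequality applies: $n+1$ copies of $f$ on $\R^{n}$ composed with linear combinations of $k+1$ integration variables. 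Hence $\|T_{k,n}f\|_{\lqm}\le\|T_{k,n}f^{*}\|_{\lqm}$, where $f^{*}$ is the symmetric nonincreasing rearrangement on $\R^{n}$; since $\|f^{*}\|_{L^p(\R^n)}=\|f\|_{L^p(\R^n)}$, extremality of $f$ forces $\|T_{k,n}f^{*}\|_{\lqm}=\|T_{k,n}f\|_{\lqm}$, and hence equality in the pointwise inequality above for a.e.\ $t$.

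\textit{Applying Burchard's theorem.} Using the layer-cake formula $f=\int_0^\infty\one_{\{f>s\}}\,ds$ in each factor, this equality forces, for a.e.\ $t$ and a.e.\ $(s_0,\dots,s_n)$, equality in the corresponding multilinear functional of the indicators $\one_{\{f>s_i\}}$. When $k=n-1$ this functional is $\mathcal{I}$ and Burchard's theorem applies directly; for general $k$ it has $n-k$ ``combined'' slots instead of one, so the first task (this is \S2.2) is to establish the analogous equality characterization for this overdetermined functional, using also that for a.e.\ $t$ the linear forms involved are in general position and that one may restrict attention to tuples of levels lying within a bounded ratio, on which the radii of $\{f>s_i\}^{*}$ are automatically strictly admissible since $n\ge 2$ (as $1\le k\le n-1$). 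The output is: for a.e.\ admissible $(t;s_0,\dots,s_n)$ there exist a centered ellipsoid $\mathcal E$, vectors $c_i$, and scalars $\alpha_i>0$ with $c_i=\sum_{j=0}^{k}t^{(i)}_{j}c_j$ for $i>k$, such that $\{f>s_i\}=c_i+\alpha_i\mathcal E$ up to a null set.

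\textit{Gluing, and the main obstacle.} It remains to glue this parameter-by-parameter data into a statement about $f$ itself, and this is the hardest part. Because the $\{f>s_i\}$ are superlevel sets of one function, $\mathcal E$, $c_i$, $\alpha_i$ depend (up to null sets) only on $s_i$; comparing tuples that share a level and normalizing, the shape of $\mathcal E$ must be a single fixed centered ellipsoid $\mathcal N_0$, independent of $t$ and of the level, so $\{f>s\}=c(s)+\alpha(s)\mathcal N_0$ up to a null set for a.e.\ $s$. Substituting this into $c(s_{k+1})=\sum_{j=0}^{k}t^{(k+1)}_{j}c(s_j)$ and letting $t^{(k+1)}$ range over the affine hyperplane $\{\sum_j t_j=1\}$ shows the right-hand side is independent of $t$, forcing $c(s_0)=\dots=c(s_k)$ and hence $c(\cdot)$ to be a.e.\ equal to a constant $c$; nestedness of the superlevel sets then makes $\alpha(\cdot)$ nonincreasing after choosing a representative. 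Writing $\mathcal N_0=\{y:|My|<1\}$ for an invertible symmetric $M$ gives $f(x)=G(|M(x-c)|)$ a.e.\ for a nonincreasing $G$, i.e.\ $f=F\circ\phi$ with $\phi(x)=M(x-c)$ an invertible affine endomorphism and $F(y)=G(|y|)$ radial nonincreasing, as claimed. The genuine difficulty throughout is that, since extremizers are only known to be measurable, every geometric assertion above holds only up to null sets and only for almost every parameter value, so each gluing step must be made rigorous through a succession of Fubini and Lebesgue-density arguments with no continuity to fall back on; this occupies \S2.3--2.4 and is the main obstacle, with the adaptation of Burchard's theorem in \S2.2 being the secondary one.
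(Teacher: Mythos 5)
Your route is genuinely different from the paper's: you rearrange in the full variable $x\in\R^n$, writing the points $x_{k+1},\dots,x_n$ in barycentric coordinates $t$ relative to $x_0,\dots,x_k$, whereas the paper fixes the horizontal variables $x'_i\in\R^k$ and applies Brascamp--Lieb--Luttinger only in the $(n-k)$-dimensional fibers $v$, i.e.\ a Steiner-type symmetrization. If your version of the Burchard step were secured, your endgame would in fact be shorter than the paper's: you would get homothetic ellipsoids in $\R^n$ directly, and varying $t$ with the levels fixed forces a common center, so you would never need the paper's machinery of almost Lebesgue convex superlevel sets, continuity of $\rho$, scaled skew reflections, and the conjugation of a compact subgroup of the affine group into $O(n)$ (\S 2.4--2.5 and \S 5). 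Your reduction of Drury's identity and the passage from extremality to a.e.\ equality at the level of indicator functions are also sound.

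The genuine gap is in the step you delegate to ``\S 2.2.'' For $k<n-1$ the functional has $n-k\ge 2$ combined slots, and there is no equality characterization for such overdetermined Riesz-type forms under the hypothesis you invoke (strict admissibility of all $n+1$ radii, ``automatic'' for levels within a bounded ratio). Burchard's theorem only covers the form $\mathcal{I}$ with a single combined slot, and the paper's adaptation (Lemma \ref{extraf} and Theorem \ref{ABT}) works by a quite different mechanism: the extra $n-k-1$ sets are made so \emph{large} that their indicator factors are identically $1$ on the relevant region --- this is the permissibility condition \eqref{admB}, $\sum_j|b_{i,j}|\rho_j<\rho_i$ for $i\in[k+2,n]$ --- after which Burchard is applied to the remaining $k+2$ sets under the weighted admissibility \eqref{admA}. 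In your scheme, where all $n+1$ sets are superlevel sets of the one function $f$, this forces you to take the levels $s_{k+2},\dots,s_n$ near zero so that $\rho(s_i)\to\infty$, which requires $|\{f>0\}|=\infty$; that is exactly the a.e.\ positivity of nonnegative extremizers, proved in the paper via the Euler--Lagrange equation (Lemma \ref{nonneg}, \S 6), and your proposal never invokes it. By contrast, ``levels within a bounded ratio'' points in the opposite direction and cannot make the extra slots redundant; moreover, for a merely measurable $f$ the radii at comparable levels need not be comparable, so even the claimed automatic strict admissibility is not justified. Finally, you still need to show that the set of parameters $(t,s_0,\dots,s_n)$ where permissibility holds has positive measure inside the full-measure set where equality holds (the analogue of the construction in the proof of Proposition \ref{lsE}); in your setting this is easier, since $\rho$ depends only on the single variable $s$, but it must be carried out and is absent from the sketch. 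As written, the central step of the argument is therefore unsupported, although the surrounding architecture is correct and, with the redundancy-plus-positivity mechanism supplied, would yield a streamlined proof.
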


\begin{proof}[Proof of Theorem 1 assuming Proposition 1] 
It is easy to see that if $f\in\lp$ is an extremizer of \eqref{main} then $f=c|f|$ for some $c\in\C-\{0\}$, thus it suffices to consider nonnegative extremizers. By Proposition 1, the conditions of Drouot's theorem are satisfied for all nonnegative functions, and  thus any extremizer can be written $f=c(1+|\phi(x)|^2)^{-(k+1)/2}$ for some $c\in\C$ and $\phi$ an invertible affine map. That any such function is an extremizer follows as $f=c(1+|x|^2)^{-(k+1)/2}$ is an extremizer, and invertible affine maps a symmetries of $\eqref{main}$ (\cite{D11}).  
\end{proof} 

\begin{subsection}{Direct Symmetrization } 
Following Christ's proof in \cite{C11}, we begin by reorganizing Drury's identity separating $\R^n$ into $\R^{k}\times\R^{n-k}$ with coordinates $x'\in\R^{k}$ and $v\in\R^{n-k}$.  After this change the inner integral will be of the form addressed by \cite{BLL} and, additionally, we may use the flexibility in varying the parameters in the outer integral to resolve some of the technical complications.
\begin{lem} \label{DL2}  Let $f\in L^p(\R^n)$ be a nonnegative function. There exists $C\in\R_+$ depending only on $n$ and $k$ such that 
 	\begin{multline*} 	
		 \|T_{k,n}f\|_{\lqm}^{q}= \\  
		C\int_{\R^{(n-k)(k+1)}}\!\!\!\!\!\!\!\!\!\!\!\!\!\!\!\!\!\!\!\! \!\!\!\! \volk^{(k-n)}(x'_0,\ldots, x'_k)\!\!\int_{\R^{k(n+1)}}\!\! \prod_{i=0}^kf(x'_i,v_i)\!\!\!\!  \prod_{i=k+1}^{n}\!\!\!\!
			f(x'_i,\sum_{j=0}^{k}\!b_{i,j}v_j)\;dv_0\ldots dv_k dx'_0\ldots dx'_{n}  
	\end{multline*} 
where $b_{i,j}$ are certain measurable real-valued functions of $x'_0,\ldots,x'_{k},x'_i$, $i$ and $j$. 
\end{lem}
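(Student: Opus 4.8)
The plan is to start from Drury's identity (Lemma 1) and pass to the splitting $\R^n=\R^k\times\R^{n-k}$, $x=(x',v)$, by re-coordinatizing each $k$-plane integral through orthogonal projection onto the first $k$ coordinates. Since $f$ is nonnegative, Tonelli's theorem lets us reorder and regroup all integrations freely, and finiteness of $\|T_{k,n}f\|_{\lqm}$ shows the integrand of Drury's identity is integrable; in particular configurations with $\voln(x_0,\ldots,x_k)=0$ may be ignored. I would also discard the set of configurations $(x_0,\ldots,x_k)$ with $\volk(x'_0,\ldots,x'_k)=0$: this is the zero set of a polynomial, not identically zero, in the $k(k+1)$ coordinates of $x'_0,\ldots,x'_k$, hence a null set in the configuration space $\R^{n(k+1)}$. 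Away from these two null sets, the projection $P\colon\R^n\to\R^k$, $(x',v)\mapsto x'$, restricts to an affine isomorphism of $\pi:=\pi(x_0,\ldots,x_k)$ onto $\R^k$.

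Next I would re-coordinatize $\pi$ via $P$. A generic point of $\pi$ is an affine combination $\sum_{j=0}^k t_j x_j$ with $\sum_j t_j=1$, which projects to $\sum_j t_j x'_j$; since $\volk(x'_0,\ldots,x'_k)\neq0$, for each $x'_i\in\R^k$ there is a unique tuple $b_{i,j}=b_{i,j}(x'_0,\ldots,x'_k,x'_i)$ with $\sum_{j=0}^k b_{i,j}=1$ and $\sum_{j=0}^k b_{i,j}x'_j=x'_i$, namely the barycentric coordinates of $x'_i$ relative to the simplex on $x'_0,\ldots,x'_k$; Cramer's rule exhibits the $b_{i,j}$ as rational, hence jointly measurable, functions on their domain of definition. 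The point of $\pi$ corresponding to $x'_i$ is then $\bigl(x'_i,\sum_{j=0}^k b_{i,j}v_j\bigr)$, which is exactly the expression in the statement. Because $P$ is affine it scales $k$-dimensional volume by a constant, and since it carries the simplex on $x_0,\ldots,x_k$ (of $k$-volume $\voln(x_0,\ldots,x_k)$) onto the simplex on $x'_0,\ldots,x'_k$ (of $k$-volume $\volk(x'_0,\ldots,x'_k)$), that constant is $\volk(x'_0,\ldots,x'_k)/\voln(x_0,\ldots,x_k)$. Hence for each $i\in\{k+1,\ldots,n\}$,
\[
\int_{\pi}f(x_i)\,d\sigma=\frac{\voln(x_0,\ldots,x_k)}{\volk(x'_0,\ldots,x'_k)}\int_{\R^k}f\Bigl(x'_i,\sum_{j=0}^k b_{i,j}v_j\Bigr)\,dx'_i.
\]

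Finally I would reassemble Drury's identity. Writing $x_i=(x'_i,v_i)$ and $dx_i=dx'_i\,dv_i$ for $i=0,\ldots,k$, substituting the display above into each of the $n-k$ plane integrals, and collecting the volume factors yields
\[
\voln^{(k-n)}(x_0,\ldots,x_k)\cdot\Bigl(\frac{\voln(x_0,\ldots,x_k)}{\volk(x'_0,\ldots,x'_k)}\Bigr)^{n-k}=\volk^{(k-n)}(x'_0,\ldots,x'_k),
\]
so all dependence on $\voln$ disappears and the remaining weight involves only $x'_0,\ldots,x'_k$. One last application of Tonelli arranges the integrations over $v_0,\ldots,v_k\in\R^{n-k}$ and over $x'_0,\ldots,x'_n\in\R^k$ into the order displayed in the statement, with a constant $C$ still depending only on $n$ and $k$. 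I expect the main obstacle to be the bookkeeping in this volume cancellation, together with the verification that the two null sets $\{\voln=0\}$ and $\{\volk=0\}$ are genuinely removable; the rest is a routine change of variables justified by Tonelli.
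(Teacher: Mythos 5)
Your proposal is correct and follows essentially the same route as the paper: re-coordinatize each plane integral via the projection $(x',v)\mapsto x'$, identify the coefficients $b_{i,j}$ as barycentric coordinates computed by Cramer's rule (which is exactly the paper's formula \eqref{coef}), and cancel the $n-k$ Jacobian factors $\voln/\volk$ against $\voln^{(k-n)}$. Your explicit handling of the null sets $\{\voln=0\}$ and $\{\volk=0\}$ and the appeal to Tonelli are details the paper leaves implicit, but the argument is the same.
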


\begin{proof}
	This is essentially a change of coordinates. Let $x_i=(x'_i,v_i)$ for $i\in[0,n]$. Take $x'_i$ to be an independent variable in $\R^k$ for each $i\in[0,n]$, and take  $v_i$ to be an independent variable in $\R^{(n-k)}$ for $i\in[0,k]$. Then for $i\in[k+1,n]$, $v_i$ will be determined by $x'_0, \ldots, x'_k$, $x'_i$, and $v_0,\ldots,v_k$ so that for $i\in [k+1, n]$,  each $(x_i',v_i)$ lies  in the $k$-plane spanned by $\{(x'_i,v_i)\}_{i=0}^k$. Specifically, let $A:\R^k\to\R^{n-k}$ be the unique affine map determined by $(k+1)$-tuple of equations $\{A(x_i')=v_i\}_{i=0}^k$. Then for $i\in[k+1,n]$, set $v_i=A(x_i')$. \\
\indent  Our goal is to express $d\sigma$ in terms of $dx_i'$ for $i\in[k+1,n]$. The parameterization above of $\pi(x_0,\ldots,x_k)$ takes the $k$-simplex in $\R^k$ spanned by $(x'_0,\ldots,x'_k)$ which has volume $\volk(x'_0,\ldots,x'_{k})$ to the $k$-simplex in $\R^n$ spanned by $(x_0,\ldots,x_{k})$ which has volume $\voln(x_0,\ldots,x_{k})$.  Therefore, for each $x_i$ with $i\in[k+1,n]$, $d\sigma(x_i)=\frac{\voln(x_0,\ldots,x_{k})}{\volk(x'_0,\ldots,x'_{k})}dx_i'$. As $n-k$ terms of this type appear in Drury's identity, the $\voln(x_0,\ldots,x_{k})$ terms cancel leaving
	\begin{multline*}
 \|T_{k,n}f\|_{\lqnm}^{n+1}= \\
		C\int\!\!\!\! \int\!\prod_{i=0}^k\!f(x'_i,v_i)\!\!\!\prod_{i=k+1}^{n}\!\!\!\!f(x'_i,A(x'_i))\volk^{(k-n)}(x'_0,\ldots, x'_k)\;
		dv_1\ldots dv_k dx'_0\ldots dx'_{n}.
	\end{multline*}
	Finally, a computation by Cramer's rule shows that for $i\in[k+1,n]$,  $A(x'_i)=\sum_{j=0}^{k}b_{i,j}v_j$ for coefficients $b_{i,j}$ given by 
	\begin{equation}\label{coef}
		b_{i,j}=\frac{\volk(x'_0,\ldots,x'_{j-1},x'_i,x'_{j+1},\ldots,x'_k)}{\volk(x'_0,\ldots,x'_k)}.
	\end{equation}
The formula \eqref{coef} gives $b_{i,j}=\delta_{i,j}$ if $0\leq i \leq k$. Define $b_{i,j}=\delta_{i,j}$ for all $0\leq i \leq k$. 
\end{proof}
 
The inner integral in Lemma \ref{DL2} becomes $$\int\prod_{i=0}^kf(x'_i,v_i)\prod_{i=k+1}^{n}f(x'_i,\sum_{j=0}^{k}b_{i,j}v_j)\;dv_0\ldots dv_k =\int\prod_{i=0}^{n}f_{x'_i}(\sum_{j=0}^{k}b_{i,j}v_j)dv_0\ldots dv_k .$$ 

\begin{defn} For $b_{i,j}$ with $i\in[0,n]$ and $j\in[0,k]$ depending on $(x_0',\ldots,x_n')$, given by \eqref{coef}, and $F_i:\R^{n-k}\to\R$ for all $i\in[0,n]$, let $\Tx$ denote the operator given by 
	\begin{equation*} 
		\Tx(F_0,\ldots,F_n)= \int\prod_{i=0}^{n}F_i(\sum_{j=0}^{k}b_{i,j}v_j)dv_0\ldots dv_k .
	\end{equation*}
\end{defn} 

\indent As the $b_{i,j}$ are real valued, by Brascamp, Lieb, and Luttinger's theorem
\begin{equation}\label{blleq}
	\Tx(F_0,\ldots,F_n) \leq\Tx(F_0^*,\ldots,F_n^*).
\end{equation}
 
\begin{lem}\label{Tx}
	For every nonnegative extremizer $f\in L^p(\R^n)$  of \eqref{main} and every symmetry $\mathcal{J}$ of \eqref{main}, for almost every $x'_0,\ldots,x'_n$ 
	\begin{equation*}
	\Tx(\mathcal{J}(f)_{x'_0},\ldots,\mathcal{J}(f)_{x'_n})=\Tx(\mathcal{J}(f)_{x'_0}^*,\ldots,\mathcal{J}(f)_{x'_n}^*). 
	\end{equation*}
\end{lem}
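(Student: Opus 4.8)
The plan is to combine Lemma~\ref{DL2} with the pointwise rearrangement inequality \eqref{blleq} and the extremality hypothesis. First I would reduce to the case where $\mathcal{J}$ is the identity: any symmetry of \eqref{main} preserves both $\|\cdot\|_{\lp}$ and $\|T_{k,n}\cdot\|_{\lqm}$ and carries nonnegative functions to nonnegative functions, so $g:=\mathcal{J}(f)$ is again a nonnegative extremizer of \eqref{main}; it therefore suffices to prove that for \emph{every} nonnegative extremizer $g\in\lp$ one has $\Tx(g_{x'_0},\ldots,g_{x'_n})=\Tx(g^*_{x'_0},\ldots,g^*_{x'_n})$ for a.e.\ $(x'_0,\ldots,x'_n)$. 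The main device is the \emph{fiberwise rearrangement} $Sg:\R^n\to[0,\infty)$ given by $Sg(x',v):=g^*_{x'}(v)$ --- for each fixed $x'\in\R^k$ the slice $v\mapsto Sg(x',v)$ is the symmetric nonincreasing rearrangement on $\R^{n-k}$ of $v\mapsto g(x',v)$ (and we set $Sg(x',\cdot)\equiv 0$ on the null set of $x'$ for which $g_{x'}$ has a superlevel set of infinite measure, so that $Sg$ is defined everywhere). By construction $(Sg)_{x'}=g^*_{x'}$.

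Next I would establish the two facts about $Sg$ that the argument needs: (i) $Sg$ is Lebesgue measurable on $\R^n$, and (ii) $Sg\in\lp$ with $\|Sg\|_{\lp}=\|g\|_{\lp}$. Fact (ii) follows from Tonelli's theorem, which gives $\|h\|_{\lp}^p=\int_{\R^k}\|h_{x'}\|_{L^p(\R^{n-k})}^p\,dx'$ for $h=g$ and for $h=Sg$, together with the standard one-variable identity $\|g^*_{x'}\|_{L^p(\R^{n-k})}=\|g_{x'}\|_{L^p(\R^{n-k})}$; the same computation also shows $g_{x'}\in L^p(\R^{n-k})$ for a.e.\ $x'$, so that its rearrangement is well defined a.e. Fact (i) is the one genuinely technical point: writing $Sg(x',v)=\int_0^\infty\one_{\{|v|<r(x',t)\}}\,dt$, where $r(x',t)$ is the radius of the ball in $\R^{n-k}$ of volume $|\{v:g_{x'}(v)>t\}|$, the map $(x',t)\mapsto|\{v:g_{x'}(v)>t\}|$ is measurable by Tonelli, hence so is $r$, and hence so is $Sg$.

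With (i) and (ii) in hand I would apply Lemma~\ref{DL2} to the two nonnegative $\lp$ functions $g$ and $Sg$, obtaining $\|T_{k,n}g\|_{\lqm}^{q}=C\int\volk^{(k-n)}(x'_0,\ldots,x'_k)\,\Tx(g_{x'_0},\ldots,g_{x'_n})\,dx'_0\cdots dx'_n$ and $\|T_{k,n}(Sg)\|_{\lqm}^{q}=C\int\volk^{(k-n)}(x'_0,\ldots,x'_k)\,\Tx(g^*_{x'_0},\ldots,g^*_{x'_n})\,dx'_0\cdots dx'_n$, the second identity using $(Sg)_{x'}=g^*_{x'}$. Since $\volk^{(k-n)}(x'_0,\ldots,x'_k)>0$ for a.e.\ $(x'_0,\ldots,x'_k)$ --- the affinely dependent $(k+1)$-tuples in $\R^k$ form a Lebesgue-null set --- the Brascamp--Lieb--Luttinger inequality \eqref{blleq} yields $\|T_{k,n}g\|_{\lqm}^{q}\le\|T_{k,n}(Sg)\|_{\lqm}^{q}$. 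On the other hand, applying \eqref{main} to $Sg$ and using (ii) together with the extremality of $g$ gives $\|T_{k,n}(Sg)\|_{\lqm}\le A\|Sg\|_{\lp}=A\|g\|_{\lp}=\|T_{k,n}g\|_{\lqm}$. Hence both inequalities are equalities, so the integral over the $x'$-variables of the (nonnegative) quantity $\volk^{(k-n)}(x'_0,\ldots,x'_k)\big(\Tx(g^*_{x'_0},\ldots,g^*_{x'_n})-\Tx(g_{x'_0},\ldots,g_{x'_n})\big)$ vanishes; as $\volk^{(k-n)}>0$ a.e., the difference $\Tx(g^*_{x'_0},\ldots,g^*_{x'_n})-\Tx(g_{x'_0},\ldots,g_{x'_n})$ is zero for a.e.\ $(x'_0,\ldots,x'_n)$, which is the assertion.

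The main obstacle is fact (i), the joint measurability of the fiberwise rearrangement $Sg$; once that is dealt with, the rest is a formal combination of Lemma~\ref{DL2}, the inequality \eqref{blleq}, and the extremality hypothesis, together with the elementary positivity of $\volk^{(k-n)}$ off a null set. One should also note in passing that all the integrals above are finite --- immediate from Lemma~\ref{DL2} and \eqref{main} since $g,Sg\in\lp$ --- so that subtracting the $\Tx$-integrals is legitimate.
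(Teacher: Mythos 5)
Your proposal is correct and follows essentially the same route as the paper: reduce to $\mathcal{J}=\mathrm{id}$, form the fiberwise rearrangement ($f^\sharp$ in the paper, your $Sg$), apply Lemma \ref{DL2} to both functions, use \eqref{blleq} together with extremality and $\|Sg\|_{\lp}=\|g\|_{\lp}$ to force equality of the two $x'$-integrals, and then conclude pointwise a.e.\ equality from the nonnegativity of the integrand difference and the a.e.\ positivity of $\volk^{(k-n)}$. The only difference is that you spell out the measurability and norm-preservation of the fiberwise rearrangement, which the paper leaves implicit.
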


\begin{proof}
	As  $\mathcal{J}$ is a symmetry of \eqref{main}, $\mathcal{J}(f)$ is an extremizer of \eqref{main}, hence it suffices to consider $\mathcal{J}$ the identity transformation on $L^p(\R^n)$.  Multiplying both sides of \eqref{blleq} by $\volk(x'_0,\ldots,x'_k)^{(k-n)}$ gives 
	\begin{equation}\label{detT} 
	 \!\!\!\! \volk(x'_0,\!\smldots\!,x'_k)^{(k-n)}\Tx\!(f_{x'_0},\!\smldots\!,f_{x'_n})\!\leq\!\!\volk(x'_0,\!\smldots\!,x'_k)^{(k-n)}\Tx\!(f^*_{x'_0},\!\smldots\!,f^*_{x'_n})  .
	\end{equation}
\indent Let $f^\sharp(x,v)=f_x^*(v)$. Then integrating in each $x_i'$ shows 
	\begin{equation}\label{rrTkn}
		\|T_{k,n}f\|_{\lqm}^q\leq\|T_{k,n}f^\sharp\|_{\lqm}^q.
	\end{equation} 
Since $f$ is an extremizer, there is equality in \eqref{rrTkn}. Hence, there is equality in \eqref{detT} for almost every $x_0',\ldots,x_k'$. Multiplying by $\volk(x'_0,\ldots,x'_k)^{(n-k)}$, which is nonzero for almost every $x_0',\ldots,x_k'$,  proves the proposition. 
\end{proof}
Following Burchard \cite{B96} and Christ \cite{C11}, rather than work directly with $\Tx$, we further reduce to the case where $\Tx$ is applied to characteristic functions of superlevel sets of extremizers. This requires the layer cake decomposition of a function.
\begin{prop}[Layer cake decomposition (see for instance \cite{LLtext})] 
	If $f$ is a nonnegative measurable function, then 
	\begin{equation*} 
		f(x)=\int_0^\infty \one_{\{f(x)>t\}}(x)dt. 
	\end{equation*}
\end{prop}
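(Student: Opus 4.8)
The plan is to prove the identity pointwise in $x$. Since the two sides involve no coupling between distinct values of $x$, it suffices to fix $x\in\R^n$, set $a:=f(x)\in[0,\infty]$, and verify the scalar identity $a=\int_0^\infty \one_{\{a>t\}}\,dt$. The key step is the elementary observation that, viewed as a function of the integration variable $t\in(0,\infty)$, the integrand $t\mapsto\one_{\{f>t\}}(x)$ is exactly the indicator of the interval $(0,a)$: it equals $1$ when $t<a$ and $0$ when $t\geq a$ (and is identically $1$ on $(0,\infty)$ in the degenerate case $a=\infty$). Granting this, the right-hand side equals the Lebesgue measure of $(0,a)$, namely $a$, which is $f(x)$; carrying this out for every $x$ finishes the argument.

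I would also record two routine remarks. First, the superlevel sets $\{f>t\}$ are measurable because $f$ is measurable, so the indicators appearing are well defined; moreover joint measurability of $(x,t)\mapsto\one_{\{f>t\}}(x)$ together with Tonelli's theorem guarantees that $x\mapsto\int_0^\infty\one_{\{f>t\}}(x)\,dt$ is a measurable function (which is automatic \emph{a posteriori}, since it equals $f$). Second, the identity is genuinely pointwise, not merely almost everywhere, so no null-set subtleties arise. There is essentially no obstacle here: the proposition is the definition of the Lebesgue integral in disguise, and it is stated only because this layer-cake decomposition is the device that will let us pass from $\Tx$ applied to the slices $f_{x'_i}$ to $\Tx$ applied to the indicators $\one_{E(x'_i,s)}$ of their superlevel sets in the next stage of the proof.
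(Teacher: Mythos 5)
Your proof is correct and is exactly the standard pointwise argument; the paper does not prove this proposition at all but simply cites it as a known fact from \cite{LLtext}, where the same elementary verification (the integrand in $t$ is the indicator of $(0,f(x))$, so the integral is its length) appears. Nothing further is needed.
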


To implement this reduction we will need a proposition parallel to Lemma \ref{Tx} for superlevel sets. 
\begin{prop}\label{rreqP} 
	 For every nonnegative extremizer $f$ of \eqref{main}, for almost every $x'_0,\ldots,x'_n$  and  almost every $s_0,\ldots, s_n$, 
\begin{equation} \label{rreqls}
\Tx(E(x'_0,s_0),\ldots,E(x'_n,s_n))= \Tx(E(x'_0,s_0)^*,\ldots,E(x'_n,s_n)^*)
\end{equation} 
where $E(x'_i,s_i)$ is shorthand for $\one_{E(x'_i,s_i)}$. 
\end{prop}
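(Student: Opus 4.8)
The plan is to deduce \eqref{rreqls} from Lemma \ref{Tx} by expanding each slice into layers, interchanging orders of integration, and invoking the pointwise‑in‑$s$ rearrangement inequality \eqref{blleq}, following Burchard \cite{B96} and Christ \cite{C11}. Fix a nonnegative extremizer $f$. For a.e.\ $x'\in\R^k$ the slice $f_{x'}$ lies in $L^p(\R^{n-k})$, so every superlevel set $E(x',s)$ has finite measure, $E(x',s)^*$ is a (possibly empty) ball, and the rearrangement $f_{x'}^*$ is defined. By Lemma \ref{DL2}, $\int \volk^{(k-n)}(x'_0,\ldots,x'_k)\,\Tx(f_{x'_0},\ldots,f_{x'_n})\,dx'_0\cdots dx'_n=C^{-1}\|T_{k,n}f\|_{\lqm}^{q}<\infty$, and $\volk^{(k-n)}(x'_0,\ldots,x'_k)\in(0,\infty)$ for a.e.\ $(x'_0,\ldots,x'_k)$; hence $\Tx(f_{x'_0},\ldots,f_{x'_n})<\infty$—and therefore, by Lemma \ref{Tx}, also $\Tx(f_{x'_0}^*,\ldots,f_{x'_n}^*)<\infty$—for a.e.\ $(x'_0,\ldots,x'_n)$.

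Next, substitute the layer‑cake decomposition $f_{x'_i}(v)=\int_0^\infty\one_{E(x'_i,s_i)}(v)\,ds_i$, together with $f_{x'_i}^*(v)=\int_0^\infty\one_{E(x'_i,s_i)^*}(v)\,ds_i$ (which is the very definition of the rearrangement of $f_{x'_i}$), into the definition of $\Tx$. Since all integrands are nonnegative, Tonelli's theorem gives, for a.e.\ $(x'_0,\ldots,x'_n)$,
\begin{align*}
\Tx(f_{x'_0},\ldots,f_{x'_n})&=\int_{[0,\infty)^{n+1}}\Tx\big(E(x'_0,s_0),\ldots,E(x'_n,s_n)\big)\,ds_0\cdots ds_n,\\
\Tx(f_{x'_0}^*,\ldots,f_{x'_n}^*)&=\int_{[0,\infty)^{n+1}}\Tx\big(E(x'_0,s_0)^*,\ldots,E(x'_n,s_n)^*\big)\,ds_0\cdots ds_n,
\end{align*}
both right‑hand integrals being finite for a.e.\ $(x'_0,\ldots,x'_n)$. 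Applying \eqref{blleq} with $F_i=\one_{E(x'_i,s_i)}$, and using $(\one_E)^*=\one_{E^*}$, gives for every fixed $(s_0,\ldots,s_n)$
$$\Tx\big(E(x'_0,s_0),\ldots,E(x'_n,s_n)\big)\le\Tx\big(E(x'_0,s_0)^*,\ldots,E(x'_n,s_n)^*\big).$$

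Let $g(x'_0,\ldots,x'_n,s_0,\ldots,s_n)$ be the (nonnegative) difference between the right‑ and left‑hand sides of the last display. By Lemma \ref{Tx} the two displayed integrals over $(s_0,\ldots,s_n)$ coincide for a.e.\ $(x'_0,\ldots,x'_n)$, so $\int_{[0,\infty)^{n+1}}g\,ds_0\cdots ds_n=0$; since $g\ge0$, this forces $g=0$ for a.e.\ $(s_0,\ldots,s_n)$, for a.e.\ $(x'_0,\ldots,x'_n)$, which is precisely \eqref{rreqls}. (One may also phrase the conclusion as $g=0$ a.e.\ on $\R^{k(n+1)}\times[0,\infty)^{n+1}$: $g$ is jointly measurable, being built from the explicit integral formula for $\Tx$ and the measurable dependence of $E(x'_i,s_i)$ on $(x'_i,s_i)$, so Tonelli applies once more.)

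I expect the only point needing care to be the Tonelli bookkeeping—securing that $\Tx(f_{x'_0},\ldots,f_{x'_n})$ is finite for a.e.\ $(x'_0,\ldots,x'_n)$ before splitting into layers and before forming the difference of the two integrals, so that the subtraction is legitimate; this is the reason for first invoking Lemma \ref{DL2}. The remaining ingredients—the layer‑cake decomposition, the identity $(\one_E)^*=\one_{E^*}$, and the inequality \eqref{blleq}—are applied directly.
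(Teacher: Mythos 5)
Your proposal is correct and follows essentially the same route as the paper's proof: layer-cake decomposition of each slice, the Brascamp--Lieb--Luttinger inequality applied pointwise in $(s_0,\ldots,s_n)$, and Lemma \ref{Tx} to force the nonnegative difference to vanish almost everywhere. The extra Tonelli/finiteness bookkeeping via Lemma \ref{DL2} is a sound (and welcome) justification of a step the paper leaves implicit, but it does not change the argument.
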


\begin{proof}
	Applying the layer cake decomposition to each $F_{x'_i}$,
	\begin{equation*}
		\Tx(F_{x'_0},\ldots,F_{x'_n})\!=\!\!\int_{(0,\infty)^{n\!+\!1}}\!\!\int_{(\R^{n\!-\!k})^{k\!+\!1}}\!\prod_{i=0}^n\one_{E(x'_i,s_i)}\!\!
		\left(\sum_{j=0}^{k}b_{i,j}v_j\right)\!\!\prod_{l=0}^k\!dv_l \!\!\prod_{m=0}^n\!\!ds_m
	\end{equation*}
	Similarly,
	\begin{equation*}
		\Tx(F^*_{x'_0},\ldots,F^*_{x'_n})\!=\!\!\int_{(0,\infty)^{n\!+\!1}}\!\!\int_{(\R^{n\!-\!k})^{k\!+\!1}}\!\prod_{i=0}^n
		\one_{E^*(x'_i,s_i)}\!\!
		\left(\sum_{j=0}^{k}b_{i,j}v_j\right)\!\!\prod_{l=0}^kdv_l\!\! \prod_{m=0}^n \!\!ds_m.
	\end{equation*}
	Again by the result of Brascamp, Lieb, and Luttinger in \cite{BLL}, 
	\begin{equation*}
		\!\!\! \int_{(\R^{n\!-\!k})^{k\!+\!1}}\!\prod_{i=0}^n\!\one_{E(\!x'_i,s_i\!)}\!\!
		\left(\!\sum_{j=0}^{k}b_{i,j}v_j\!\!\right)\!\!\prod_{l=0}^kdv_l\!\leq\!\!\!
		\int_{(\R^{n\!-\!k})^{k\!+\!1}}\!\prod_{i=0}^n\!\one_{E^*\!(\!x'_i,s_i\!)}\!\!
		\left(\!\sum_{j=0}^{k}b_{i,j}v_j\!\!\right)\!\!\prod_{l=0}^kdv_l\!\!\!.
	\end{equation*}
	Integrating in $s_i$ gives 
	\begin{equation*}
		\Tx(f_{x'_0},\ldots,f_{x'_n})\leq\Tx(f^*_{x'_0},\ldots,f^*_{x'_n}).
	\end{equation*}
	As equality holds here for almost every $x'_0,\ldots,x'_n$ and the product of characteristic functions is nonnegative, equality must hold in \eqref{rreqls} for almost every $x'_0,\ldots,x'_n$, for  almost every $s_0,\ldots, s_n$ . 
\end{proof}
\end{subsection}

\begin{subsection}{Inverse symmetrization for superlevel sets }

In \cite{C11},  Christ performs a change of variables and applies Burchard's Theorem (\cite{B96},\cite{Bthesis}) to conclude that the superlevel sets of the $f_{x_i}$ are intervals.  Here, because of the change in the relationship between the dimension and the number of functions, the result does not apply directly. Before applying Burchard's Theorem (\cite{B96},\cite{Bthesis}), we must first show that the extra $n-k$ functions are redundant given a modified admissibility condition and then apply a change of variables so that the functions, rather than the functional, depend on $b_{i,j}$. 
\begin{defn}
	A set of positive numbers $\{\rho_i\}_{i=0}^{n}$ is permissible with respect to $(x_0',\ldots,x'_n)$  if:
	\begin{eqnarray}
		\label{admA} \sum_{\stackrel{j=0}{j\neq i}}^{k+1}|b_{(k+1),j}|\rho_j>|b_{(k+1),i}|\rho_i& \text{  for all } i\in[0,k+1] \\
		\label{admB} \sum_{j=0}^{k}|b_{i,j}|\rho_j<\rho_i & \text{  for all } i\in[k+2,n] 
	\end{eqnarray}
	 where  the $b_{i,j}$ are determined for $i\in[0,n]$ and $j\in[0,k]$ by $x_0',\ldots,x'_n$ according to \eqref{coef} and $b_{(k+1),(k+1)}=1$.
\end{defn}

\begin{lem}\label{extraf}
	  For $i\in[0,n]$ let $E_i\subset \R^{n-k}$ be a set of finite positive measure. Let $\rho_i$ be the radius of $E_i^*$. If the set $\{\rho_i\}_{i=0}^n$ is permissible with respect to $(x_0',\ldots,x'_n)$   and $\Tx(E_0,\ldots,E_n)=\Tx(E^*_0,\ldots,E^*_n)$ then 
	\begin{equation*}
		\Tx(E_0,\ldots,E_{n})=\Tx(E_0,\ldots,E_{k+1},\R,\ldots,\R)
	\end{equation*}
	and 
	\begin{equation*}
		\Tx(E_0,\ldots,E_{k+1},\R,\ldots,\R)=\Tx(E^*_0,\ldots,E^*_{k+1},\R,\ldots,\R)
	\end{equation*}
\end{lem}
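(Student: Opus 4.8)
The plan is to deduce both assertions at once by sandwiching $\Tx(E_0,\ldots,E_n)$ between $\Tx(E_0,\ldots,E_{k+1},\R,\ldots,\R)$ and $\Tx(E_0^*,\ldots,E_n^*)$ through three comparisons, each of which the hypothesis $\Tx(E_0,\ldots,E_n)=\Tx(E_0^*,\ldots,E_n^*)$ then forces to be an equality.

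First, since the integrand defining $\Tx$ is a product of nonnegative factors and $\one_{E_i}\le\one_{\R^{n-k}}$ pointwise, dropping the factors indexed by $i\in[k+2,n]$ only increases the integral, so $\Tx(E_0,\ldots,E_n)\le\Tx(E_0,\ldots,E_{k+1},\R,\ldots,\R)$. Second, I would apply the Brascamp--Lieb--Luttinger inequality to the right-hand side; to stay within the hypotheses of that theorem I first replace each copy of $\R^{n-k}$ by a ball $B_R$ of radius $R$ about the origin, so that all $n+1$ sets have finite measure, obtaining $\Tx(E_0,\ldots,E_{k+1},B_R,\ldots,B_R)\le\Tx(E_0^*,\ldots,E_{k+1}^*,B_R,\ldots,B_R)$ because $B_R^*=B_R$, and then let $R\to\infty$. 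Both sides increase to their obvious limits by monotone convergence, and both limits are finite since (using $b_{i,j}=\delta_{i,j}$ for $i\le k$) the integrand is bounded by $\prod_{i=0}^{k}\one_{E_i^*}(v_i)$; this gives $\Tx(E_0,\ldots,E_{k+1},\R,\ldots,\R)\le\Tx(E_0^*,\ldots,E_{k+1}^*,\R,\ldots,\R)$.

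The third comparison is where permissibility is used, and it is in fact an exact identity: I claim $\Tx(E_0^*,\ldots,E_{k+1}^*,\R,\ldots,\R)=\Tx(E_0^*,\ldots,E_n^*)$. Indeed, $E_i^*$ is the open ball of radius $\rho_i$, and because $b_{i,j}=\delta_{i,j}$ for $i\le k$ the integrand of $\Tx(E_0^*,\ldots,E_n^*)$ vanishes unless $|v_i|<\rho_i$ for all $i\in[0,k]$; on that set the triangle inequality together with the strict inequality \eqref{admB} gives $|\sum_{j=0}^{k}b_{i,j}v_j|\le\sum_{j=0}^{k}|b_{i,j}|\rho_j<\rho_i$ for every $i\in[k+2,n]$, so each factor $\one_{E_i^*}(\sum_j b_{i,j}v_j)$ with $i\in[k+2,n]$ equals $1$ there and may be replaced by $\one_{\R^{n-k}}$ without changing the value of the integral.

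Chaining the three comparisons yields $\Tx(E_0,\ldots,E_n)\le\Tx(E_0,\ldots,E_{k+1},\R,\ldots,\R)\le\Tx(E_0^*,\ldots,E_{k+1}^*,\R,\ldots,\R)=\Tx(E_0^*,\ldots,E_n^*)$, and since the two ends are equal by hypothesis every step is an equality; equality of the first two terms is the first assertion and equality of the last three is the second. I do not expect a serious obstacle: the only points requiring care are justifying the use of a BLL-type inequality with $\R^{n-k}$ inserted (handled by the truncation-and-limit step above) and checking that \eqref{admB}, being a \emph{strict} inequality, genuinely places $\sum_j b_{i,j}v_j$ in the open ball $E_i^*$ rather than its closure. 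Note that condition \eqref{admA} plays no role in this lemma; it is recorded in the definition of permissibility for the later change of variables and the application of Burchard's theorem.
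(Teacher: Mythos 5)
Your proof is correct and follows essentially the same route as the paper: the identical chain $\Tx(E_0,\ldots,E_n)\le\Tx(E_0,\ldots,E_{k+1},\R,\ldots,\R)\le\Tx(E^*_0,\ldots,E^*_{k+1},\R,\ldots,\R)=\Tx(E^*_0,\ldots,E^*_n)$, with the final identity deduced from the strict inequality \eqref{admB} exactly as in the paper, and the hypothesis forcing equality at every step. Your truncation-by-balls justification of the middle rearrangement step is a reasonable extra care (the paper simply invokes the rearrangement inequality for $\Tx$; alternatively one can just drop the $\one_{\R}$ factors and apply Brascamp--Lieb--Luttinger directly to the remaining $k+2$ indicator functions).
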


\begin{proof}
	By definition $\Tx(E^*_0,\ldots,E^*_n)=\int\prod_{i=0}^n\one_{E^*_i}(\sum_{j=0}^{k}b_{i,j}v_j)dv_0\ldots dv_k$.   Recall that $b_{i,j}=\delta_{i,j}$ if $i,j\in[0,k]$. Consider 
	\begin{equation*}
		\prod_{i=0}^k\one_{E^*_i}(v_i)\one_{E^*_l}(\sum_{j=0}^{k}b_{l,j}v_j). 
	\end{equation*}
	For $l\in[k+2,n]$, from the definition of permissibility \eqref{admB}, 
		$$ \rho_l>\sum_{j=0}^k|b_{l,j}|\rho_j.$$
	As $\rho_j$ is the radius of the open ball $E_j^*$ which is centered at the origin, it follows that for any choice of vectors $v_j\in E_j^*$, 
		$$\sum_{j=0}^k|b_{l,j}|v_j\in E_{l}^*.$$
	Therefore,
	\begin{equation*}
		\prod_{i=0}^k\one_{E^*_i}(v_i)\one_{E^*_l}(\sum_{j=0}^{k}b_{l,j}v_j)=\prod_{i=0}^k\one_{E^*_i}(v_i).
	\end{equation*}
	Because this holds for every $l\in[k+2,n]$, 
	\begin{equation*}
		\prod_{i=0}^k\one_{E^*_i}(v_i)\prod_{l=k+2}^{n}\one_{E^*_l}
			(\sum_{j=0}^{k}b_{l,j}v_j)=\prod_{i=0}^k\one_{E^*_i}(v_i).
	\end{equation*}
	Multiplying by $\one_{E^*_{k+1}}(\sum_{j=0}^{k}b_{l,j}v_j)$ yields, 
	\begin{equation*}
		\prod_{l=1}^{n}\one_{E^*_l}(\sum_{j=0}^{k}b_{l,j}v_j)=
		\prod_{i=0}^{k+1}\one_{E^*_i}(v_i).
	\end{equation*}
Multiply the right hand side by one in the form $\prod_{l=k+2}^{n}\one_{\R}(\sum_{j=0}^{k}b_{l,j}v_j)$ and integrate in $v_j$ for $j\in[0,k]$ to obtain
	\begin{equation} \label{TkTm}
		\Tx(E^*_0,\ldots,E^*_n)=\Tx(E^*_0,\ldots,E^*_{k+1},\R,\ldots,\R). 
	\end{equation}
	Now
		$$\prod_{i=0}^n\one_{E_i}(\sum_{j=0}^{k}b_{i,j}v_j)\leq\prod_{i=0}^{k+1}\one_{E_i}(\sum_{j=0}^{k}b_{i,j}v_j)$$
	because each term in the product is a characteristic function. Hence
		$$\Tx(E_0,\ldots,E_n)\leq\Tx(E_0,\ldots\!,E_{k\!+\!1},\R,\ldots,\R).$$
	Combining this with \eqref{TkTm} and the fact that $\Tx$ satisfies rearrangement inequalities yields
\begin{multline*} 
\Tx(E_0,\ldots,E_n)\leq\Tx(E_0,\ldots,E_{k\!+\!1},\R,\ldots,\R) \\ \leq
\Tx(E_0^*,\ldots,E_{k\!+\!1}^*,\R,\ldots,\R)=\Tx(E_0^*,\ldots,E_{n}^*)
\end{multline*} 
	Since by assumption $\Tx(E_0,\ldots,E_n)=\Tx(E_0^*,\ldots,E_{n}^*)$ equality must hold at every step.
\end{proof}

\begin{thm}[An adaption of Burchard's theorem for indicator functions] \label{ABT}
	 Let $ E_i$  be sets of finite positive measure in $\R^{n-k}$ for $i\in[0,n]$. Denote by $\rho_i$ the radius of $E_i^*$. If the family $\rho_i$ is permissible with respect to $(x'_0,\ldots,x'_n)$ and 
	\begin{equation*}
		 \Tx(E_0,\cdots,E_n)= \Tx(E^*_0,\cdots,E^*_n) 
	\end{equation*}
	then for each $i\in[0,k+1]$ there exist vectors $\beta_i\in\R^{n-k}$ and numbers $\alpha_i\in\R_+$ such that $\sum_{i=0}^{k}\beta_i=\beta_{k+1}$, and there exists an ellipsoid $\mathcal{E}$ which is centered at the origin and independent of $i$ such that,  up to null sets,
	\begin{equation*}
		b_{(k+1),i}E_i=\beta_i+\alpha_i\mathcal{E}
	\end{equation*}
	 where the $b_{i,j}$ are determined for $i\in[0,n]$ and $j\in[0,k]$ by $x_0',\ldots,x'_n$ according to \eqref{coef} and $b_{(k+1),(k+1)}=1$.
\end{thm}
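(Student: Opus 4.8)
The plan is to strip off the redundant $n-k$ sets using Lemma~\ref{extraf}, perform a diagonal change of variables that turns the remaining equality into an equality case of the Brascamp--Lieb--Luttinger rearrangement inequality for $k+2$ sets in $k+1$ vector variables, apply Burchard's theorem for indicator functions, and then transport the conclusion back through the change of variables.

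First, the hypotheses of Theorem~\ref{ABT} are exactly those of Lemma~\ref{extraf}, so that lemma gives $\Tx(E_0,\dots,E_{k+1},\R,\dots,\R)=\Tx(E_0^{*},\dots,E_{k+1}^{*},\R,\dots,\R)$; since $b_{i,j}=\delta_{i,j}$ for $i\in[0,k]$ and the $\one_{\R}$-factors are $1$, both sides equal $\int_{(\R^{n-k})^{k+1}}\prod_{i=0}^{k}\one_{E_i}(v_i)\,\one_{E_{k+1}}(\sum_{j=0}^{k}b_{(k+1),j}v_j)\,dv_0\cdots dv_k$ (respectively with each $E_i$ replaced by $E_i^{*}$). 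Assuming $b_{(k+1),j}\neq0$ for all $j\in[0,k]$ --- which holds whenever $(x_0',\dots,x_n')$ is in general position, the only regime in which Theorem~\ref{ABT} is used, and is anyway needed for the conclusion to make sense --- substitute $w_j=b_{(k+1),j}v_j$. The Jacobian is one and the same nonzero constant on the two sides; cancelling it and using $(cE)^{*}=|c|\,E^{*}$, the equality becomes
\[
\int\prod_{j=0}^{k}\one_{\tilde E_j}(w_j)\,\one_{\tilde E_{k+1}}\Big(\textstyle\sum_{j=0}^{k}w_j\Big)\,dw=\int\prod_{j=0}^{k}\one_{\tilde E_j^{*}}(w_j)\,\one_{\tilde E_{k+1}^{*}}\Big(\textstyle\sum_{j=0}^{k}w_j\Big)\,dw,
\]
where $\tilde E_j:=b_{(k+1),j}E_j$ for $j\in[0,k]$, $\tilde E_{k+1}:=E_{k+1}$, and $\tilde E_j^{*}$ is the ball of radius $|b_{(k+1),j}|\rho_j$ for $j\in[0,k]$ and of radius $\rho_{k+1}$ for $j=k+1$. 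Matching the all-plus form $\sum_j w_j$ to the signed form $x_1-\sum_{i\ge2}x_i$ in the definition of $\mathcal{I}$, by reflecting those $\tilde E_j$ with $b_{(k+1),j}<0$ (which changes neither measures nor symmetric rearrangements), this is precisely an identity $\mathcal{I}(G_0,\dots,G_{k+1})=\mathcal{I}(G_0^{*},\dots,G_{k+1}^{*})$ with $m=k+1\ge2$ (recall $k\ge1$), where $G_0,\dots,G_{k+1}$ are, up to reflection and relabeling, the sets $\tilde E_0,\dots,\tilde E_{k+1}$.

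The radii of the $G_i^{*}$ now form the family $\{\rho_{k+1}\}\cup\{\,|b_{(k+1),j}|\rho_j:j\in[0,k]\,\}$, and --- this is the point of the permissibility conditions --- inequality \eqref{admA}, read with $b_{(k+1),(k+1)}=1$, asserts exactly that this family is strictly admissible. Burchard's theorem for indicator functions then supplies vectors $\gamma_i\in\R^{n-k}$ and numbers $\alpha_i\in\R_{+}$ for $i\in[0,k+1]$ satisfying the linear relation of that theorem, together with one ellipsoid $\mathcal{E}$ centered at the origin, such that $G_i=\gamma_i+\alpha_i\mathcal{E}$ up to null sets. Undoing the relabeling and reflections, using $-\mathcal{E}=\mathcal{E}$, exhibits each $b_{(k+1),i}E_i$ ($i\in[0,k+1]$) as $\beta_i+\alpha_i\mathcal{E}$ up to a null set, with $\beta_i=\pm\gamma_{\sigma(i)}$ for an explicit permutation $\sigma$ and explicit signs; carrying those signs --- and the minus signs in the definition of $\mathcal{I}$ --- through Burchard's relation among the $\gamma_i$ turns it into $\sum_{i=0}^{k}\beta_i=\beta_{k+1}$, which is the assertion.

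The two substantive points are that permissibility does exactly what is needed: \eqref{admB} is what lets Lemma~\ref{extraf} discard the last $n-k$ sets, and \eqref{admA} is exactly strict admissibility of the rescaled radii $|b_{(k+1),j}|\rho_j$, $\rho_{k+1}$ --- both essentially forced once one sees how the change of variables $w_j=b_{(k+1),j}v_j$ interacts with $\mathcal{I}$. The only real labor I expect is the sign- and index-bookkeeping in the final step: identifying the all-plus form with the signed form defining $\mathcal{I}$ and verifying that Burchard's center relation becomes $\sum_{i=0}^{k}\beta_i=\beta_{k+1}$. A secondary nuisance is the degenerate case $b_{(k+1),j}=0$, which must be excluded at the outset (or ruled out from the situation in which the theorem is applied), since there the change of variables collapses.
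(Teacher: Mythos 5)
Your proposal is correct and follows essentially the same route as the paper: strip the last $n-k$ sets via Lemma~\ref{extraf}, rescale the variables by $b_{(k+1),j}$ (the paper folds the signs directly into the substitution $y_0=b_{(k+1),0}v_0$, $y_i=-b_{(k+1),i}v_i$, whereas you rescale and then reflect, a cosmetic difference), recognize the result as an equality $\mathcal{I}(E_{k+1},b_{(k+1),0}E_0,\ldots,b_{(k+1),k}E_k)=\mathcal{I}$ of the rearranged sets, note that \eqref{admA} is exactly strict admissibility of the rescaled radii, and invoke Burchard's theorem. Your remark that $b_{(k+1),j}\neq 0$ must be assumed is a fair point that the paper leaves implicit (and which holds in all applications of the theorem), but it does not change the argument.
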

\begin{proof} 
By Lemma \ref{extraf}, $ \Tx(E_0,\cdots,E_{k+1},\R,\ldots,\R)= \Tx(E^*_0,\cdots,E^*_{k+1},\R,\ldots,\R)$.   Set $y_0=b_{(k+1),0}v_0$ and $y_i= - b_{(k+1),i}v_i$ for $i\in[1,k]$.   Recall that $b_{i,j}=\delta_{i,j}$ if $i,j\in[0,k]$.
	\begin{align*}
		\Tx(E_0,\ldots,E_{k+1},\R,\ldots,\R)= \int\prod_{i=0}^{k}\one_{E_i}(v_i)\one_{E_{k+1}}(\sum_{j=0}^{k}b_{(k+1),j}v_j)dv_0\ldots dv_k\\
		= c \int\prod_{i=0}^{k}\one_{E_i}(b_{(k+1),i}^{-1}y_i)\one_{E_{k+1}}				(y_0-\sum_{j=1}^{k}y_j)dy_0\ldots dy_k. 
	\end{align*}
Therefore, 
	\begin{equation*}
		\Tx(E_0,\ldots,E_{k+1},\R,\ldots,\R)=c\mathcal{I}(E_{k+1},b_{(k+1),0}E_0,\ldots, b_{(k+1),k}E_k).
	\end{equation*}
The permissibility condition \eqref{admA} is precisely the requirement that the radii of $\{b_{(k+1),i}E_i^*\}_{i=0}^k\bigcup E^*_{k+1}$ are strictly admissible. Thus as the family $\rho_i$ is permissible with respect to $(x'_0,\ldots,x'_n)$,  Burchard's Theorem applied to $\{b_{(k+1),i}E_i^*\}_{i=0}^k\bigcup E^*_{k+1}$ gives the result.  
\end{proof} 

\end{subsection}

\begin{subsection}{Identifying $(n-k)$-cross sections of superlevel sets }

\begin{defn} To each nonnegative extremizer $f$ of \eqref{main}, associate a function $\rho(x',s)$ which is the radius of the ball $E^*(x',s)$. \end{defn} 

In this section we show that almost every $(n-k)$-cross section of almost every superlevel set is, up to a null set, an ellipsoid. The main step is to show that each such set of positive measure can be associated to an $(n+1)$-tuple of sets to which Burchard's theorem in the form of Theorem \ref{ABT} may be applied.    We construct such $(n+1)$-tuples predominantly following the proof of Lemma 5.4 in \cite{C11}. Our proof differs in that it is not yet known that extremizers are continuous, so we will rely on Lebesgue points of the function $\rho(x',s)$. The goal is:

\begin{prop} \label{lsE}  Let $f$ be any nonnegative extremizer of \eqref{main}. For almost every $x'\in\R^k$, for almost every $s\in\R_+$ the set $E(x',s)$ differs from an ellipsoid by a null set.
\end{prop}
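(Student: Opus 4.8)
The plan is to derive Proposition~\ref{lsE} from Theorem~\ref{ABT} by producing, for almost every base pair $(x'_0,s_0)$, an $(n+1)$-tuple of superlevel sets to which that theorem applies with $E(x'_0,s_0)$ in the role of $E_0$. Fix a nonnegative extremizer $f$, and consider a base pair with $|E(x'_0,s_0)|>0$ (the case $|E(x'_0,s_0)|=0$ being trivial). For a.e.\ $x'_0$ the slice $f_{x'_0}$ lies in $L^p(\R^{n-k})$, so $\rho(x'_0,s)$ is finite for every $s>0$ and positive whenever $|E(x'_0,s)|>0$; discarding a further null set, I may assume that $(x'_0,s_0)$ is a Lebesgue point of the measurable function $(x',s)\mapsto\rho(x',s)$, that $|\{v:f(x',v)>0\}|\ge\tfrac12\,|E(x'_0,s_0)|$ for $x'$ in a set of density one at $x'_0$ (Lebesgue differentiation applied to a truncation), that $\rho(x'_0,s_0)=:r\in(0,\infty)$, and, combining Proposition~\ref{rreqP} with Fubini, that the set of tuples $(x'_1,\dots,x'_n,s_1,\dots,s_n)$ for which equality \eqref{rreqls} holds with $(x'_0,\dots,x'_n)$ and $(s_0,\dots,s_n)$ is co-null. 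It then suffices to find $x'_1,\dots,x'_n\in\R^k$ and $s_1,\dots,s_n\in\R_+$ such that: (i) the full tuple lies in that co-null equality set; (ii) each $E_i:=E(x'_i,s_i)$ has finite positive measure; and (iii) the simplex $x'_0,\dots,x'_k$ is nondegenerate and the radii $\rho_i:=\rho(x'_i,s_i)$ are permissible with respect to $(x'_0,\dots,x'_n)$. Granting this, Theorem~\ref{ABT} gives $b_{(k+1),0}E_0=\beta_0+\alpha_0\mathcal E$ up to a null set; since $b_{(k+1),0}\neq0$ and $\alpha_0>0$, the set $E(x'_0,s_0)$ equals the ellipsoid $b_{(k+1),0}^{-1}(\beta_0+\alpha_0\mathcal E)$ up to a null set, which is the assertion.

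To exhibit the tuple I would adapt the construction behind Lemma~5.4 of \cite{C11}, arranging the positions so that the permissibility inequalities \eqref{admA}--\eqref{admB} hold with quantitative slack, hence robustly. Take $x'_1,\dots,x'_k$ to be small generic perturbations of $x'_0$ forming a nondegenerate $k$-simplex with it; take $x'_{k+1}$ to be a small generic perturbation of the vertex $x'_1$, placed off the affine span of $x'_1,\dots,x'_k$ so that $b_{(k+1),0}$ is nonzero but small; and take $x'_{k+2},\dots,x'_n$ to be small, mutually distinct generic perturbations of $x'_1$. For such a configuration the coefficients of \eqref{coef} satisfy $b_{i,1}\approx1$ and $b_{i,j}\approx0$ for $j\neq1$ whenever $i\geq k+1$. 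Now choose the levels so that $\rho_i\approx r$ for $i\in\{0\}\cup[k+2,n]$ (take $s_i$ near $s_0$) while $\rho_1,\dots,\rho_k,\rho_{k+1}$ are comparable, small, fixed multiples of $r$ (take the corresponding $s_i>s_0$). Then the left sides of \eqref{admB} are $\approx(\text{small})\cdot r<r\approx\rho_i$, while the radii entering \eqref{admA} are, up to small errors, $\{\,|b_{(k+1),0}|\rho_0,\ \rho_1,\ \text{small},\dots,\text{small},\ \rho_{k+1}\,\}$ with $\rho_1\approx\rho_{k+1}$ and $|b_{(k+1),0}|\rho_0>0$, a strictly admissible family provided the small entries are appropriately ordered. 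Since the $b_{i,j}$ depend continuously on the positions away from the degeneracy locus, all of \eqref{admA}--\eqref{admB} then hold on an open set of positions and radii.

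The main obstacle, and the real point of departure from \cite{C11}, is that $\rho$ is only known to be measurable, so the target radius values must be realized at \emph{actual} points near the template while staying in the co-null equality set. For the indices with $\rho_i\approx r$, the Lebesgue-point property of $\rho$ at $(x'_0,s_0)$ supplies a positive-measure set of $(x'_i,s_i)$ near $(x'_0,s_0)$ with $|\rho(x'_i,s_i)-r|$ arbitrarily small. For the indices whose radius must be a small multiple of $r$, one uses $|\{v:f(x'_0,v)>0\}|\ge|E(x'_0,s_0)|>0$ together with the density-point property at $x'_0$ to see that a positive-measure set of $x'_i$ near $x'_0$ has $\rho(x'_i,0^{+})$ bounded below by a fixed multiple of $r$; for each such $x'_i$ the nonincreasing, right-continuous function $s\mapsto\rho(x'_i,s)$ runs from that lower bound down to $0$ as $s$ increases, so for a positive-measure set of $s_i>s_0$ it lies within $\varepsilon$ of the (tiny) target value. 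Here one must avoid the at most countably many $s$ at which $s\mapsto\rho(x'_i,s)$ has a downward jump straddling the target interval, and must verify that the good choices of $x'_i$ make up a set of positive measure; this bookkeeping is the delicate part. Intersecting the resulting positive-measure family of configurations with the co-null equality set yields a configuration satisfying (i)--(iii), and Theorem~\ref{ABT} completes the proof. The crux throughout is that ``permissible'' is not preserved under modification of $\rho$ on a null set, so the construction has to be steered by the Lebesgue-point structure rather than by pointwise values of $\rho$.
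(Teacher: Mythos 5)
Your overall frame is sound and close in spirit to the paper's argument: reduce to Theorem \ref{ABT} via Proposition \ref{rreqP} and Fubini, build a positive-measure family of admissible configurations near the base pair using Lebesgue points of $\rho$, and read off the ellipsoid (placing the base set at index $0$ rather than $k+1$ is a harmless variant, as is using a generic perturbed configuration instead of the regular-simplex template of Lemma \ref{us}). The genuine gap is in the step where you must realize the \emph{prescribed} small radii $\rho_1,\dots,\rho_{k+1}\approx c\,r$ with $c$ a small fixed constant. For a function that is only known to be measurable (and a.e.\ positive), the nonincreasing map $s\mapsto\rho(x',s)$ can have a downward jump whose gap contains the entire target interval $(cr-\varepsilon,cr+\varepsilon)$; in that case \emph{no} choice of $s_i$ attains a radius near $cr$ at that $x'$, so ``avoiding the countably many jump levels'' does not help, and you give no argument that a positive-measure set of $x'_i$ near $x'_1$ avoids such straddling jumps. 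Concretely, if on a neighborhood of $x'_0$ the slices were of the form $f_{x'}(v)=2\cdot\one_{\{|v|<1\}}(v)+\varepsilon(x')g(v)$ with $g>0$ supported where $|v|\geq 1$, then the attainable radii at each such $x'$ lie in $\{0\}\cup[1,\infty)$ and every intermediate small radius is skipped; nothing known at this stage of the proof rules such behavior out, since continuity of extremizers is precisely what is unavailable. Your lower bound $\rho(x'_i,0^+)\gtrsim r$ controls only the supremum of attainable radii, not attainability of intermediate values.

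The paper's construction is engineered to avoid exactly this trap: it never prescribes a target radius out of thin air. The auxiliary radii at indices $0,\dots,k$ are required only to be close to a value $\rho(x'_{k+1},r_{k+1})$ that the function \emph{actually attains} at a Lebesgue point, chosen with $r_{k+1}<s_{k+1}$ so that it exceeds $\rho(x'_{k+1},s_{k+1})$ (this, together with the symmetric coefficients $b_{(k+1),j}=1/(k+1)$ from Lemma \ref{us}, yields \eqref{admA}); and the indices $k+2,\dots,n$ are only required to have radius larger than a fixed constant $C'$, which holds on sets of positive measure because $f>0$ a.e.\ (Lemma \ref{nonneg}) forces $\rho(x',s)\to\infty$ as $s\to 0^{+}$ (this yields \eqref{admB}). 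Your argument could likely be repaired in the same spirit — take the radii at indices $1$ and $k+1$ near the attained value $r$ itself (with tolerance small compared to $|b_{(k+1),0}|\,r$ so that \eqref{admA} remains strict), and take the radii at indices $k+2,\dots,n$ merely sufficiently large rather than $\approx r$ — but as written, the step producing the small prescribed radii does not go through.
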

\indent Before we wade into the proof we need  a few technical lemmas. 

\begin{lem}\label{Lpoints} For every nonnegative extremizer $f$ of \eqref{main} the associated function $\rho(x',s)$ is in $L^1_{loc}(\R^k\times\R_+) $.  In particular, almost every $(x',s)\in\R^k\times\R_+$ is a Lebesgue point of the function $(x',s)\to\rho(x',s)$. \end{lem}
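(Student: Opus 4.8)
The plan is to show that $\rho(x',s)$ is locally integrable by relating its $L^1$ norm over a compact set to the $L^p$ norm of the extremizer $f$, and then invoke the Lebesgue differentiation theorem. First I would recall the definition: $\rho(x',s)$ is the radius of the ball $E^*(x',s)$ where $E(x',s) = \{v : f_{x'}(v) > s\}$, so that $|E(x',s)| = |E^*(x',s)| = \omega_{n-k}\,\rho(x',s)^{n-k}$, where $\omega_{n-k}$ is the volume of the unit ball in $\R^{n-k}$. Thus it suffices to show that $(x',s) \mapsto |E(x',s)|^{1/(n-k)}$ is in $L^1_{loc}(\R^k \times \R_+)$, since the map $t \mapsto (t/\omega_{n-k})^{1/(n-k)}$ is continuous.

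The key step is the following observation. For fixed $x'$, the distribution function $s \mapsto |E(x',s)|$ is nonincreasing, and for any $\sigma > 0$ we have $\int_0^\sigma |E(x',s)|^{1/(n-k)}\,ds \le \sigma\, |E(x',0^+)|^{1/(n-k)}$ is not quite enough by itself; instead I would use that $f_{x'} \in L^p(\R^{n-k})$ for almost every $x'$ (by Fubini, since $f \in L^p(\R^n)$ with $p = (n+1)/(k+1)$), together with Chebyshev's inequality: $|E(x',s)| \le s^{-p}\int_{\R^{n-k}} |f_{x'}(v)|^p\,dv =: s^{-p} g(x')$, where $g \in L^1_{loc}(\R^k)$ — indeed $g \in L^{1}$ with $\|g\|_{L^1(\R^k)} = \|f\|_{L^p(\R^n)}^p$. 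Hence for $s$ bounded away from $0$, say $s \ge \epsilon$, and $x'$ in a compact set $K$, one gets $\rho(x',s) \lesssim (g(x')/s^p)^{1/(n-k)}$, and since $p/(n-k) = \frac{n+1}{(k+1)(n-k)}$, integrating in $s$ over $[\epsilon, M]$ is harmless (the power of $s$ is finite) while integrating $g(x')^{1/(n-k)}$ over $K$ is finite because $g \in L^1$ and $1/(n-k) \le 1$ (so $g^{1/(n-k)} \in L^1_{loc}$ by Hölder on the bounded set $K$). This controls the region $s \ge \epsilon$; near $s = 0$ one must instead use that $E(x',s)$ has finite measure for $s>0$, and bound $\int_0^\epsilon \rho(x',s)\,ds$ — here I would either use the layer-cake bound $\int_{\R^{n-k}} f_{x'}^{1/(n-k)}$-type estimate or, more robustly, simply note that $\int_{\R_+}\!\!\int_{\R^k}\rho(x',s)\,\one_{K}(x')\,\one_{(0,M)}(s)\,dx'\,ds$ needs only the $s \ge \epsilon$ tail plus a uniform-in-$\epsilon$ bound near zero, which follows from a dyadic decomposition of $s$ combined with Chebyshev at each scale and summability of the resulting geometric-type series (using that $|E(x',s)|$ is finite and that the exponent $p/(n-k) > 0$ makes the geometric sum converge toward $s = 0$, not toward $s=\infty$).

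Once local integrability of $\rho$ is established, the second assertion is immediate: by the Lebesgue differentiation theorem in $\R^{k} \times \R_+ = \R^{k+1}$, almost every point is a Lebesgue point of any $L^1_{loc}$ function, in particular of $(x',s) \mapsto \rho(x',s)$.

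I expect the main obstacle to be the bookkeeping near $s = 0$: the Chebyshev bound $|E(x',s)| \lesssim s^{-p}g(x')$ is useless as $s \to 0$, so one must argue that the singularity there is integrable. The cleanest route is probably to observe that $\int_0^\infty \rho(x',s)\,ds$ is, up to the constant $\omega_{n-k}^{-1/(n-k)}$, equal to $\int_0^\infty |E(x',s)|^{1/(n-k)}\,ds$, and to recognize the right-hand side (for fixed $x'$) as a rearrangement-type quantity comparable to a Lorentz-space norm of $f_{x'}$ — specifically it is finite whenever $f_{x'}$ lies in the Lorentz space $L^{(n-k),1}$, which holds for a.e. $x'$ because $f_{x'} \in L^p(\R^{n-k})$ with $p > n-k$ (indeed $p = \frac{n+1}{k+1} > \frac{n-k}{1}$ fails in general, so one should instead split the $s$-integral at a threshold depending on $x'$, using $L^p$-boundedness for large values and finiteness of measure for small values). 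Carrying out this splitting carefully, and checking that the resulting bound integrates in $x'$ over compact sets, is the technical heart of the argument; everything else is routine measure theory.
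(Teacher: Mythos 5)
Your core argument is correct and is essentially the paper's one-line proof spelled out: by Fubini, $g(x')=\|f_{x'}\|_{L^p(\R^{n-k})}^p$ lies in $L^1(\R^k)$, Chebyshev gives $|E(x',s)|\le s^{-p}g(x')$, hence $\rho(x',s)\le C\,s^{-p/(n-k)}g(x')^{1/(n-k)}$, and any compact subset of $\R^k\times\R_+$ is contained in some $K\times[\epsilon,M]$ with $\epsilon>0$, on which this majorant is integrable since $g^{1/(n-k)}\in L^1(K)$ by H\"older (as $1/(n-k)\le 1$); the Lebesgue differentiation theorem then gives the Lebesgue-point statement. However, what you call ``the technical heart'' --- the bookkeeping near $s=0$ --- is a red herring: the paper's $\R_+$ is the open half-line of positive reals, so compact subsets of $\R^k\times\R_+$ are bounded away from $s=0$ and $L^1_{loc}$ requires nothing there. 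This is fortunate, because the remedies you sketch for that region would not work in general: $\int_0^\epsilon \rho(x',s)\,ds$ is comparable to part of the $L^{n-k,1}$ Lorentz norm of $f_{x'}$ and can be infinite for some $f_{x'}\in L^p(\R^{n-k})$ whenever $p>n-k$ (e.g.\ the Radon case $k=n-1$, where $p=\tfrac{n+1}{n}>1=n-k$; take a slice whose distribution function behaves like $s^{-p}(\log(e/s))^{-\alpha}$ with $\alpha>1$ near $s=0$), and the geometric-series remark points the wrong way precisely in those cases, since $s^{-p/(n-k)}$ blows up as $s\to 0$ once $p\ge n-k$. Deleting that entire discussion leaves a complete, correct proof that matches the paper's intended argument (definition of $\rho$, $f\in L^p$, Fubini).
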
 
\begin{proof}  
This is a direct consequence of the definition, the observation that $f\in L^p(\R^n)$, and Fubini's theorem. 
\end{proof} 

\begin{lem}\label{nonneg}  Any nonnegative extremizer $f\in L^p(\R^n)$ of \eqref{main} satisfies $f(x)>0$ for almost every $x\in\R^n$. \end{lem}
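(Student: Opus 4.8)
The plan is to argue by contradiction. Suppose the set $Z=\{x\in\R^n : f(x)=0\}$ has positive measure. Since $f\ge 0$ is an extremizer, $f^\sharp$ (defined by $f^\sharp(x',v)=f_{x'}^*(v)$) is also an extremizer by the rearrangement inequality \eqref{rrTkn}, so we may freely replace $f$ by $f^\sharp$ when convenient; in particular each slice $f_{x'}$ may be taken to be radial and nonincreasing in $v$, so $\{v : f_{x'}(v)=0\}$ is, for each fixed $x'$, the complement of an open ball (possibly empty or all of $\R^{n-k}$). The key point to exploit is the \emph{strictness} of the Brascamp--Lieb--Luttinger inequality in \eqref{blleq}: if a slice $f_{x'_i}$ vanishes on a set of positive measure but is not almost everywhere $0$, one expects $\Tx(f_{x'_0},\dots,f_{x'_n})<\Tx(f^*_{x'_0},\dots,f^*_{x'_n})$ for a positive-measure set of $(x'_0,\dots,x'_n)$, contradicting Lemma \ref{Tx} — unless the slices $f_{x'_i}$ are already equal to their rearrangements, which for $f=f^\sharp$ they are. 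So the contradiction must instead be extracted geometrically: if $f=f^\sharp$ then each nonzero slice is radially decreasing, and the vanishing of $f$ on a positive-measure set means that for a positive-measure set of $x'$ the function $\rho(x',s)$ (the radius of $E^*(x',s)=E(x',s)$) is finite and bounded; one then shows this forces $T_{k,n}f$ to be \emph{strictly} smaller than it would be if $f$ were perturbed to fill in the zero set, violating extremality.

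More concretely, the steps I would carry out are: (1) reduce to $f=f^\sharp$, so every slice is radial nonincreasing; (2) assume for contradiction $|Z|>0$, which by Fubini means $\{v: f_{x'}(v)>0\}$ is a bounded ball (or empty) for $x'$ in a set $G$ of positive $\R^k$-measure; (3) compare $f$ with a competitor $\tilde f$ obtained by dilating the slices over $G$ outward (or adding a small bump supported in $Z\cap(G\times\R^{n-k})$) so that $\|\tilde f\|_p=\|f\|_p+o$ but $\|T_{k,n}\tilde f\|_q > \|T_{k,n}f\|_q$, using Drury's identity in the form of Lemma \ref{DL2}: because every factor $f_{x'_i}(\sum b_{i,j}v_j)$ in the integrand is nonnegative and the domain of integration genuinely enlarges on a positive-measure set, the multilinear integral strictly increases at first order faster than the $p$-norm, contradicting that $f$ is an extremizer. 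Alternatively, and perhaps more cleanly, (3$'$) invoke that $f=c(1+|\phi(x)|^2)^{-(k+1)/2}$ is \emph{not} yet available (that is the theorem being proved), so one must instead use only that $f^\sharp$ is a radial nonincreasing extremizer together with Drouot's normalization: Drouot \cite{D11} exhibits $f_0=c(1+|x|^2)^{-(k+1)/2}$ as a radial nonincreasing extremizer which is everywhere positive, and an extremizer whose rearrangement has a bounded support cannot have the same norm ratio as $f_0$, since truncating $f_0$'s tail strictly decreases $\|T_{k,n}f_0\|_q/\|f_0\|_p$ (again by positivity of the Drury integrand and monotone convergence). Hence $f^\sharp$, and therefore $f$, must be positive a.e.

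The main obstacle I anticipate is making the strict-inequality step rigorous without continuity or smoothness of $f$: the inequality \eqref{blleq} from Brascamp--Lieb--Luttinger is not in itself strict, so one cannot simply cite a strictness statement. The cleanest route is the competitor/perturbation argument in step (3), but one must be careful that the perturbation $\tilde f$ lies in $L^p$, that the first-order gain in $\|T_{k,n}\tilde f\|_q$ genuinely dominates the first-order cost in $\|\tilde f\|_p$ (this uses the positivity and the $(k-n)<0$ power of $\volk$ in Lemma \ref{DL2}, ensuring the outer integral converges and the inner integral is monotone under enlargement of the level sets), and that the set where $f$ vanishes is "seen" by the multilinear form, i.e.\ that the configurations $(x'_0,\dots,x'_n)$ for which some $b_{i,j}v_j$ lands in the enlarged region have positive measure — which follows since the $b_{i,j}$ are nondegenerate for a.e.\ configuration. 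Handling these convergence and measurability points for a merely measurable $f$ is the technical heart; everything else is a soft monotonicity argument.
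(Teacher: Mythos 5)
Your plan does not close the one step on which everything hinges. In the perturbation route (3), writing $\tilde f=f+\epsilon\one_B$ with $B$ a finite-positive-measure subset of the zero set, the cost in the denominator is indeed $O(\epsilon^p)$ with $p>1$ (disjoint supports), and $\|T_{k,n}\tilde f\|_q^{q}\ge\|T_{k,n}f\|_q^{q}+(n+1)\epsilon\int_{\M}(T_{k,n}f)^{n}\,T_{k,n}\one_B\,d\mu$; so the whole argument reduces to showing that this cross term is \emph{strictly} positive, i.e.\ that a positive-measure set of affine $k$-planes meets both $\{f>\delta\}$ and $B$ in positive $k$-dimensional measure. Your only remark on this point --- that the coefficients $b_{i,j}$ are nondegenerate for a.e.\ configuration --- does not address it: nondegeneracy of the parametrization says nothing about whether the plane through $k+1$ points of $\operatorname{supp}f\cup B$ intersects $\operatorname{supp}f$ in positive surface measure so that the remaining $n-k$ factors are nonzero. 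The natural way to prove the needed overlap is $\int_{\M}T_{k,n}\one_{A'}\,T_{k,n}\one_B\,d\mu=\langle T^*_{k,n}T_{k,n}\one_{A'},\one_B\rangle=c\int\!\!\int\one_{A'}(x)\one_B(y)|x-y|^{k-n}\,dx\,dy>0$, i.e.\ the strict positivity of the $T^*T$ kernel (Fuglede's formula) --- which is exactly the ingredient your write-up identifies as ``the technical heart'' and leaves open. The fallback (3$'$) is a non sequitur: the fact that truncating the tail of $c(1+|x|^2)^{-(k+1)/2}$ lowers its own ratio says nothing about whether some other function with compactly supported slices could still be an extremizer, and in any case vanishing on a set of positive measure does not force bounded support. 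The digression about strictness in Brascamp--Lieb--Luttinger is likewise beside the point, since (as you note) $f^\sharp$ saturates that inequality trivially.

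For comparison, the paper's proof is much shorter and avoids perturbation altogether: every extremizer satisfies the Euler--Lagrange equation $f=\lambda\bigl(T^*_{k,n}[(T_{k,n}f)^{q-1}]\bigr)^{1/(p-1)}$; H\"older's inequality on the $\mathcal{G}_{k,n}$-integral gives $f(x)\ge C(\lambda)\bigl(T^*_{k,n}T_{k,n}f(x)\bigr)^{p_0q_0}$ a.e., and Fuglede's formula rewrites the right side as $C\lambda\bigl(\int f(y)|y-x|^{k-n}\,dy\bigr)^{p_0q_0}$. Since the kernel $|y-x|^{k-n}$ is strictly positive off the diagonal, $f(x_0)=0$ at a single point where this holds forces $f\equiv0$ a.e., so a nonzero nonnegative extremizer is positive a.e. If you want to salvage your competitor argument, import that same kernel positivity to prove the cross-term bound above; as written, the proposal has a genuine gap precisely there.
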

The proof is deferred to the last section of the paper. 

\begin{lem}\label{us} Let $\{u_i\}_{i=0}^k$ be a set of pairwise-distinct unit vectors such that the volume of the simplex with vertices $0,u_1,\ldots,u_{j-1},u_{j+1},\ldots,u_k$ is independent of the choice of $j$.  Let $\tau>0$. If $x'_i=x'_{k+1}+\tau u_i$ for $i\in[0,k]$, then for all $j\in[0,k]$  $b_{(k+1),j}=\frac{1}{k+1}$. 
	\end{lem}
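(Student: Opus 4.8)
The plan is to compute the coefficients $b_{(k+1),j}$ directly from formula~\eqref{coef}, exploiting the symmetry hypothesis. Recall that, with the convention $b_{(k+1),(k+1)}=1$ from the permissibility definition, the relevant quantities for $j\in[0,k]$ are
\begin{equation*}
b_{(k+1),j}=\frac{\volk(x'_0,\ldots,x'_{j-1},x'_{k+1},x'_{j+1},\ldots,x'_k)}{\volk(x'_0,\ldots,x'_k)},
\end{equation*}
that is, the ratio of the $k$-volume of the simplex obtained from $x'_0,\ldots,x'_k$ by replacing the $j$-th vertex with $x'_{k+1}$ to the $k$-volume of the original simplex. The first step is to translate the whole configuration by $-x'_{k+1}$: volumes of simplices are translation-invariant, so under $x'_i\mapsto x'_i-x'_{k+1}=\tau u_i$ (and $x'_{k+1}\mapsto 0$) none of these ratios change. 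Thus we may assume $x'_{k+1}=0$ and $x'_i=\tau u_i$ for $i\in[0,k]$.

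Next I would homogenize out the scalar $\tau$: the simplex on $\tau u_0,\ldots,\tau u_k$ has $k$-volume $\tau^k$ times that of the simplex on $u_0,\ldots,u_k$, and likewise replacing one vertex by $0=x'_{k+1}$ gives $\tau^{k}$ times the volume of the corresponding simplex with a vertex at the origin; so the $\tau^k$ factors cancel in the ratio and
\begin{equation*}
b_{(k+1),j}=\frac{\volk(u_0,\ldots,u_{j-1},0,u_{j+1},\ldots,u_k)}{\volk(u_0,\ldots,u_k)}.
\end{equation*}
The numerator is the volume of the simplex with one vertex at the origin and the others at $u_0,\ldots,u_k$ with $u_j$ omitted; by the standard base$\times$height formula it equals $\frac{1}{k}$ times the $(k-1)$-volume of the simplex on $\{u_i : i\in[0,k],\ i\neq j\}$ — wait, more carefully, a simplex with vertices $0, u_{i_1},\ldots,u_{i_k}$ has $k$-volume $\tfrac1{k!}|\det(u_{i_1},\ldots,u_{i_k})|$, so the numerator is $\tfrac1{k!}|\det(u_0,\ldots,\widehat{u_j},\ldots,u_k)|$, where the hat denotes omission, i.e.\ exactly $\tfrac1{k!}$ times the quantity the hypothesis asserts is independent of $j$. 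The denominator $\volk(u_0,\ldots,u_k)$ is the $k$-volume of the $k$-simplex in $\R^k$ with these $k+1$ vertices, which is $\tfrac1{k!}|\det(u_1-u_0,\ldots,u_k-u_0)|$ and in particular does not depend on $j$.

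It then remains to show the common value of the $b_{(k+1),j}$ is $\tfrac1{k+1}$. For this I would use that they sum correctly: the affine map $A$ in Lemma~\ref{DL2} sending $x'_i\mapsto v_i$ is affine, so applying the identity $\sum_{j=0}^k b_{(k+1),j}=1$ (which holds for any point since $b_{(k+1),j}$ are the barycentric coordinates of $x'_{k+1}$ with respect to the simplex $x'_0,\ldots,x'_k$ — take all $v_i$ equal to a fixed vector, or note the numerators sum to the denominator by multilinearity of the determinant) together with the fact, just established, that all $k+1$ of these coefficients are equal, forces each to equal $\tfrac1{k+1}$. The only mild subtlety — the step I expect to require the most care — is justifying $\sum_{j=0}^k b_{(k+1),j}=1$ cleanly from~\eqref{coef}: this is the cofactor/multilinearity expansion of $\det(x'_1-x'_0,\ldots,x'_k-x'_0)$ after writing $x'_{k+1}-x'_0=\sum_j b_{(k+1),j}(x'_j-x'_0)$, i.e.\ simply the uniqueness of barycentric coordinates. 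Everything else is routine volume bookkeeping, and the symmetry hypothesis on the $u_i$ is used precisely to guarantee all numerators coincide so that "equal" can be upgraded to "equal to $\tfrac1{k+1}$."
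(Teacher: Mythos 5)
Your argument is correct and is essentially the paper's own proof with the details written out: the paper likewise observes that the hypothesis makes all the numerators $\volk(x'_0,\ldots,x'_{j-1},x'_{k+1},x'_{j+1},\ldots,x'_k)$ equal and then reads $b_{(k+1),j}=\tfrac{1}{k+1}$ off from \eqref{coef}, which is exactly the translation/scaling bookkeeping plus sum-to-one step you spell out. The one caveat, which your write-up shares with the paper, is that $\sum_{j=0}^k b_{(k+1),j}=1$ is an identity for the \emph{signed} barycentric coordinates, so identifying it with the equal \emph{unsigned} volume ratios tacitly uses that $x'_{k+1}$ lies in the simplex spanned by $x'_0,\ldots,x'_k$ (equivalently, that $0$ lies in the convex hull of the $u_i$); this holds for the regular-simplex vertices actually used in the proof of Proposition \ref{lsE}, but it is not forced by the equal-facet-volume hypothesis alone (for instance, three unit vectors in $\R^2$ with pairwise angles $60^\circ$, $60^\circ$, $120^\circ$ have equal facet volumes yet give ratios equal to $1$, not $\tfrac13$).
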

	\begin{proof}
Note that such $u_i$ exist in every dimension, take the vertices of a regular triangle, tetrahedron, etc. Take $u_i$ and $\tau$ as in the statement of the lemma.  Set $x'_i=x'_{k+1}+\tau u_i$ for $i\in[0,k]$. By choice of $u_i$   each of volumes ${\volk(x'_0,\ldots,x'_{j-1},x'_{k+1},x'_{j+1},\ldots,x'_k)}$ are equal.  Plugging this into \eqref{coef}, the definition of $b_{i,j}$, produces $b_{(k+1),j}=\frac{1}{k+1}$ for $j\in[0,k]$. 
  	\end{proof}

\begin{proof}[Proof of Proposition \ref{lsE} ] 
	Fix any $x'_{k+1}\in\R^k$ such that $f_{x'_{k+1}}$ is in $L^p(\R^{n-k})$,  $f_{x'_{k+1}}$ is positive almost everywhere and for almost every $s\in\R_+$, $(x'_{k+1},s)$ is a Lebesgue point of the function $(x',s)\to\rho(x',s)$. Almost every $x'\in\R^k$ satisfies all of these conditions: the first  because $f\in L^p(\R^n)$, the second by Lemma \ref{nonneg}, and the third by Lemma \ref{Lpoints}. Consider $s_{k+1}\in\R_+$. Either $\rho(x'_{k+1},s_{k+1})>0$ or $\rho(x'_{k+1},s_{k+1})=0$.  In the latter case, $|E(x_{k+1}',s_{k+1})|=0$ and the conclusion of Proposition \ref{lsE} is vacuously true. Hence it suffices to consider $s_{k+1}$ such that $\rho(x'_{k+1},s_{k+1})>0$. Fix some such $s_{k+1}\in\R_+$. \\
\indent Our goal is to construct a family of sets $\{S_i\subset(\R^k,\R_+): k+1\neq i\in[0,n]\}$ depending on $(x_{k+1}',s_{k+1})$ satisfying two conditions: first $|S_i|>0$  for $i\in[0,n]\setminus \{k+1\}$  and second, if for each $i\in[0,n]\setminus \{k+1\}$, $(x_i',s_i)\in S_i$ then $\{\rho(x_i',s_i)\}_{i=0}^n$ is permissible with respect to $(x_0',\ldots,x'_n)$.   Proposition \ref{rreqP}  guarantees that for almost every $(x_{k+1}, s_{k+1})\in\R^k\times\R_+$ for which such a family exists,  for almost every $(x_0',s_0),\ldots,(x'_k,s_k),(x'_{k+2},s_{k+2})\ldots,(x_n',s_n)$  equality in \eqref{rreqls}  holds in addition to permissibility. Applying Burchard's Theorem \ref{ABT} for superlevel to the sets  $E(x_0',s_0),\ldots,E(x_n',s_n)$ for which both equality and permissibility hold,  produces the desired conclusion. \\
\indent The first permissibility condition \eqref{admA} doesn't depend on $\rho_i$ for $i\in[k+2,n]$. Thus we begin by constructing  $\{S_i\}_{i=0}^k$ such that if $(x_i',s_i)\in S_i$ then $\{\rho(x_i',s_i)\}_{i=0}^{k+1}$ satisfies \eqref{admA}. \\
\indent Choose $r_{k+1}\in(0,s_{k+1})$ such that $(x'_{k+1},r_{k+1})$ is a Lebesgue point of the function $(x',s)\to\rho(x',s)$ and $\rho(x'_{k+1},r_{k+1})>\rho(x'_{k+1},s_{k+1})$. The first condition holds for almost every $r_{k+1}>0$ by choice of $x'_{k+1}$. The second must be satisfied by some $r_{k+1}$ as $f_{x_{k+1}'}(v) \in L^p(\R^{n-k})$ is almost everywhere positive, and thus larger superlevel sets always exist. \\
\indent Our strategy for constructing the family $\{S_i\}_{i=0}^k$ will be to find sets with positive measure of $(x',s)\in\R^k\times\R_+$ such that each $\rho(x',s)$ is approximately $\rho(x'_{k+1},r_{k+1})$ and the $b_{(k+1),j}(x_0',\ldots,x_{k+1}')$ for $j\in[0,k]$ are approximately equal to one another.\\
\indent Fix $\tolrho>0$ such that $4\tolrho<\min \left(\rho(x_{k+1}',r_{k+1})-\rho(x_{k+1}',s_{k+1}),\rho(x_{k+1}',s_{k+1})\right)$. This will be the tolerance in the size of superlevel  sets.\\
\indent Let $\mathcal{B}(\delrho) = B(x_{k+1}',\delrho)\times B(r_{k+1},\delrho)$.  Since $(x_{k+1}',r_{k+1})$ is a Lebesgue point of the function $(x',s)\to\rho(x',s)$, there is a $\delrho>0$ such that 
\begin{eqnarray*}
\frac{1}{|\mathcal{B}(\delrho)|}\mathlarger{\int}_{\mathcal{B}(\delrho) }|\rho(x',s)-\rho(x'_{k+1},r_{k+1})|dx'ds<\frac{\tolrho}{2(k+1)}.
\end{eqnarray*} 
Hence, 
\begin{equation*}
 \left|\left\{(x',s):|\rho(x',s)-\rho(x'_{k+1},r_{k+1})|>\frac{\tolrho}{2}\right\}\bigcap\mathcal{B}(\delrho) \right|<\frac{\left| \mathcal{B}(\delrho) \right|}{k+1}.
\end{equation*}
Therefore,  by the pigeonhole principle, it is possible to choose $\{u_i\}_{i=0}^k$ as in the statement of Lemma \ref{us}, $\tau\in(0,\delrho)$, and  $r_i\in B(r_{k+1},\delrho)$ for $i\in[0,k]$  such that $z'_i=x'_{k+1}+\tau u_i$ satisfy $|\rho(z'_i,s_i)-\rho(z'_{k+1},r_{k+1})|<\tolrho/2$  and each of the $(z'_i,r_i)$ are in turn Lebesgue points of $(x',s)\to\rho(x',s)$. Note that as computed in Lemma \ref{us} for $j\in[0,k]$, $b_{(k+1),j}(z_0',\ldots,z_k',x_{k+1}')=\frac{1}{k+1}$. Direct computation verifies that $\{\rho(z'_i,r_i)\}_{i=0}^{k}\cup\{\rho(x'_{k+1},s_{k+1})$ satisfy \eqref{admA}. \\
\indent Fix $\tolb<\tolrho \left((k+1)\rho(x_{k+1}',r_{k+1})\right)^{-1}.$ This will be the tolerance in the variation of the coefficients $b_{(k+1),j}$. For each $j\in[0,k]$ the function $b_{(k+1),j}:\R^{(n-k)(k+1)}\to\R$ is continuous, as it is a multilinear function of $\{x'_i\}_{i=0}^k$. Therefore, there exists $\delb>0$ such that if $x'_i\in B(z_i',\delb)$ for $i\in[0,k]$ and $y'_{k+1}\in B(x'_{k+1},\delb)$, then $|b_{(k+1),j}(z_0',\ldots,z_{k}',x_{k+1}')-b_{(k+1),j}(x_0',\ldots,x_{k}',y'_{k+1})|<\tolb $. \\
\indent Set $S_i=\{(x_i,s_i):|\rho(x'_i,s_i)-\rho(x'_{k+1},r_{k+1})|<\tolrho,x'_i\in B(z_i',\delb)\},$ for $i\in[0,k]$.  To see that for $i\in[0,k]$, $|S_i|>0$ recall that each $(z_i',r_i)$ is a Lebesgue point of the function $(x',s)\to\rho(x',s)$. Thus, there exists a small radius $\delta\in(0,\delb)$ such that for all $i\in[0,k]$  the condition $|\rho(z'_i,r_i)-\rho(x'_i,s_i)|<\tolrho/2$ is satisfied by at least half of the $(x'_i,s_i)$ such that $x'_i\in B(z'_i,\delta)$ and $|r_i-s_i|\leq\delta$. By the triangle inequality, such $(x'_i,s_i)$ also satisfy  $|\rho(x'_i,s_i)-\rho(x'_{k+1},r_{k+1})|<\tolrho$. \\
\indent  We now verify that any $(k+1)$-tuple $(x'_i,s_i)_{i=0}^k$ such $(x_i',s_i)\in S_i$ fulfills the permissibility condition.\\
\indent By Lemma \ref{us}, $b_{(k+1),j}(z_0',\ldots,z_{k}',x_{k+1}')=\frac{1}{k+1}$, therefore $b_{(k+1),j}=b_{(k+1),j}(x_0',\ldots,x_{k+1}')\in(\frac{1}{k+1}-\tolb,\frac{1}{k+1}+\tolb)  $. 
\begin{multline*} \sum_{j=0}^{k}|b_{(k+1),j}|\rho(x'_j,s_j) \geq (1-(k+1)\tolb)(\rho(x'_{k+1},r_{k+1})-\tolrho)   \\
\geq \rho(x'_{k+1},r_{k+1})-(k+1)\tolb\rho(x'_{k+1},r_{k+1})-\tolrho
\end{multline*}
As $\tolb< \frac{\tolrho}{(k+1)\rho(x'_{k+1},r_{k+1})}$ and $2\tolrho< \rho(x_{k+1}',r_{k+1})-\rho(x_{k+1}',s_{k+1})$ , 
$$ \sum_{j=0}^{k}|b_{(k+1),j}|\rho(x'_j,s_j) >\rho(x'_{k+1},s_{k+1}).$$
Fix any $i\in [0,k]$, then 
\begin{multline*}
 \sum_{\stackrel{j=0}{j \neq i}}^{k+1}|b_{(k+1),j}|\rho(x'_j,s_j) \geq  \frac{k-k\tolb}{k+1}\left(\rho(x'_{k+1},r_{k+1})-\tolrho\right)+\rho(x'_{k+1},s_{k+1}) \\
\geq \frac{k\rho(x'_{k+1},r_{k+1})}{k+1}+ \rho(x'_{k+1},s_{k+1}) -\frac{k}{(k+1)}(\tolrho+\tolb\rho(x'_{k+1},r_{k+1})).
\end{multline*} 
As $\tolb< \frac{\tolrho}{(k+1)\rho(x'_{k+1},r_{k+1})}$ and $\frac{k(k+2)}{(k+1)^2}<1$ , 
\begin{multline*}
 \sum_{\stackrel{j=0}{j\neq i}}^{k+1}|b_{(k+1),j}|\rho(x'_j,s_j) \geq   \frac{k\rho(x'_{k+1},r_{k+1})}{k+1}+ \rho(x'_{k+1},s_{k+1}) -\tolrho 
 \geq   \frac{k\rho(x'_{k+1},r_{k+1})}{k+1}+ 3\tolrho \\
\end{multline*} 
Additionally as $\tolb\leq\frac{\tolrho}{(k+1)\rho(x'_{k+1},r_{k+1})}<1$, and $k\geq 1$ , 
\begin{multline*}|b_{(k+1),j}|\rho(x'_j,s_j)<(\frac{1}{k+1}+\tolb)(\rho(x'_{k+1},r_{k+1})+\tolrho)\\ 
\leq \frac{\rho(x'_{k+1},r_{k+1})}{k+1}+\frac{3}{k+1}\tolrho \leq \frac{\rho(x'_{k+1},r_{k+1})}{k+1}+2\tolrho.\end{multline*} 

Therefore $$ \sum_{\stackrel{j=0}{j\neq i}}^{k+1}|b_{(k+1),j}|\rho(x'_j,s_j) >  |b_{(k+1),i}|\rho(x'_j,s_j).$$ 

s
\indent To prove the proposition, it remains to find a family $\{S_i\}_{i=k+2}^n$. Given the construction above, for $i\in[0,k]$ if $(x_i',s_i)\in S_i$, $\rho(x'_i,s_i)<\rho(x'_{k+1},r_{k+1})+\tolrho$. Moreover, if $i\geq k+1$ then $b_{i,j}(x'_0,\ldots,x'_n)=b_{(k+1),j}(x'_0,\ldots,x'_k,x'_i)$. Therefore $|b_{i,j}(x'_0,\ldots,x'_n)-b_{(k+1),j}(z'_0,\ldots,z'_k,x_{k+1})|<\frac{1}{k+1}+\tolb $. Hence, there exists $C'\in\R$ such that if $(x_j',s_j)\in S_j$ for $j\in[0,k]$ 
$$ \sum_{j=0}^k|b_{i,j}|\rho(x'_j,s_j)\leq C'.$$
 For each $i\in[k+2,n]$, set  $S_i=\{(x'_i,s_i): x'_i\in B(x'_{k+1},\delta)\text{ and } \rho(x'_i,s_i)>C'\}$.  $S_i$ for $i\in[k+2,n]$ has positive measure by positivity of the nonnegative extremizer $f$  (see Lemma \ref{nonneg}). Moreover, if $(x_i',s_i)\in S_i$ for $i\in[0,n]\setminus\{k+1\}$, then
 $$  \sum_{j=0}^k|b_{i,j}|\rho(x'_j,s_j)\leq C'<\rho(x'_i,s_i)$$ 
and hence \eqref{admB} is satisfied. 
\end{proof}

\end{subsection}

\begin{subsection} {Identifying $(n-k)$-cross sections part II: shared geometry} 

Thus far we have shown that almost all $(n-k)$-dimensional cross sections of the superlevel  sets of extremizers are ellipsoids up to null sets.  The next step is to show  that these elliptical cross sections almost always have the same geometry, i.e., they are translations and dilations of a single ellipsoid in $\R^{n-k}$. Further, we show that the translations are given by an affine function.\\
\indent We have not yet used the full strength of Burchard's theorem.  Applying Theorem \ref{ABT}:
\begin{lem}\label{sok}  For every nonnegative extremizer $f$ of \eqref{main}, for almost every $x'\in\R^k$, for almost every $s\in\R_+$, there exist an ellipsoid $\mathcal{E}(x')\subset\R^{n-k}$ centered at the origin, a vector $\gamma(x')\in\R^{n-k}$ and a number $\alpha(x',s)\in\R$ such that,  up to a null set, \begin{equation*} 
E(x',s)=\gamma(x')+\alpha(x',s)\mathcal{E}(x'). 
\end{equation*} 
\end{lem}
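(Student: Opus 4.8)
The plan is to upgrade Proposition~\ref{lsE} to a statement that holds for a single ellipsoid shape as $s$ varies (and eventually as $x'$ varies), by re-running the construction in the proof of Proposition~\ref{lsE} but now keeping track of \emph{which} ellipsoid Theorem~\ref{ABT} produces. Fix a generic $x' \in \R^k$ — one for which $f_{x'} \in L^p(\R^{n-k})$ is positive a.e., a.e.\ $s$ gives a Lebesgue point of $\rho$, and the conclusion of Proposition~\ref{lsE} holds for a.e.\ $s$. For a.e.\ pair $s < r$ in $\R_+$ with $\rho(x',s), \rho(x',r) > 0$, I want to run the construction of Proposition~\ref{lsE} with $x'_{k+1} := x'$ and \emph{two} distinguished levels: I will place $r$ in the ``large'' role (the role played by $r_{k+1}$ there) and view $s$ as the level $s_{k+1}$. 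The point is that in that construction all of $E(z'_0,s_0),\dots,E(z'_k,s_k)$ and $E(x'_{k+1},s_{k+1})$ are fed into Theorem~\ref{ABT} simultaneously, and the conclusion is that \emph{all $k+2$ of them} are translates/dilates of \emph{one common ellipsoid} $\mathcal{E}$. In particular $b_{(k+1),k+1} E(x',s) = E(x',s)$ and $b_{(k+1),0} E(z'_0, s_0)$ are both $\beta_i + \alpha_i \mathcal{E}$ for this common $\mathcal{E}$. Varying $s$ (with the auxiliary data $z'_i, s_i, r$ held fixed, which is permissible since the $S_i$ have positive measure and the permissibility estimates have slack) forces $E(x',s)$ to be a translate/dilate of the \emph{same} ellipsoid $\mathcal{E}(x')$ for a.e.\ $s$. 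This gives the ellipsoid shape $\mathcal{E}(x')$ depending only on $x'$, not on $s$.

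Next I would extract the translation and dilation structure \emph{in $s$} for fixed $x'$: having fixed $\mathcal{E}(x')$, for a.e.\ $s$ we have $E(x',s) = \gamma(x',s) + \alpha(x',s)\mathcal{E}(x')$ for some center $\gamma(x',s) \in \R^{n-k}$ and scale $\alpha(x',s) \in \R_+$ (with $\alpha$ essentially determined by $\rho(x',s)$ and the normalization of $\mathcal{E}(x')$). The claim of Lemma~\ref{sok} as stated is exactly this with $\gamma$ \emph{independent of $s$}, so the real content is: the centers $\gamma(x',s)$ do not move as $s$ varies. To see this, note the superlevel sets are nested — $E(x',s') \subseteq E(x',s)$ for $s < s'$ up to null sets, since $f_{x'}$ is a genuine function (fix a representative). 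Two nested ellipsoids that are homothetic copies of a common centered ellipsoid $\mathcal{E}(x')$, with the larger strictly larger in measure, must have the same center unless the ratio is $1$; more carefully, if $\gamma + \alpha \mathcal{E} \subseteq \gamma' + \alpha' \mathcal{E}$ with $\alpha < \alpha'$ then $|\gamma - \gamma'| \le (\alpha' - \alpha) \cdot (\text{inradius of } \mathcal{E})$, and nesting across a continuum of levels with $\alpha(x',s) \to \alpha(x',s)$ continuously (which follows from $\rho$ being a Lebesgue-a.e.-defined $L^1_{loc}$ function together with monotonicity of $s \mapsto \rho(x',s)$, which is genuinely monotone) pins $\gamma(x',s)$ to a single value $\gamma(x')$ for a.e.\ $s$. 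I would phrase this via: the map $s \mapsto \gamma(x',s)$ is, after adjusting on a null set, constant, because monotone nested homothetic ellipsoids share a common ``limit point'' and the centers all coincide with it.

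The main obstacle, I expect, is the first paragraph: arranging the construction of Proposition~\ref{lsE} so that the \emph{same} ellipsoid is forced for all the levels $s$ simultaneously, rather than getting a possibly $s$-dependent ellipsoid each time Theorem~\ref{ABT} is invoked. The resolution is that Theorem~\ref{ABT} is applied \emph{once} to a tuple that already contains $E(x',s)$ alongside the fixed auxiliary sets $E(z'_i, r_i)$; since those auxiliary sets are held fixed as $s$ varies, and Theorem~\ref{ABT} says all members of the tuple share one ellipsoid, the ellipsoid attached to $E(x',s)$ is the ellipsoid attached to the fixed $E(z'_0, r_0)$, hence independent of $s$. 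One must check that the permissibility estimates in Proposition~\ref{lsE}, which were arranged for a fixed $s_{k+1}$, in fact hold uniformly for $s_{k+1}$ ranging over a positive-measure set of levels near the original one — this is true because those estimates only used $\rho(x'_{k+1}, s_{k+1}) < \rho(x'_{k+1}, r_{k+1}) - 2\tolrho$, an open condition — and then a covering argument in $s$ upgrades ``a.e.\ $s$ in a neighborhood'' to ``a.e.\ $s \in \R_+$''. A secondary technical point is handling levels $s$ with $\rho(x',s) = 0$, which are vacuous, and the measure-zero set of bad $x'$, exactly as in Proposition~\ref{lsE}.
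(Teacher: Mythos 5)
Your first paragraph reproduces the paper's mechanism for the shape: apply Theorem \ref{ABT} once to a tuple containing $E(x',s)$ together with fixed auxiliary sets, so the common ellipsoid attached to $E(x',s)$ is the one attached to the fixed sets and is therefore independent of $s$; that part is fine (the paper does it by running the construction of Proposition \ref{lsE} with $r_{k+1}$ chosen above both levels of an almost-arbitrary pair $s,\tilde s$, so that equality and permissibility hold for both levels simultaneously). The genuine gap is your second paragraph, and you correctly flag it as the real content of the lemma: constancy of the center cannot be deduced from nestedness. Nested homothetic copies of a fixed centered ellipsoid need not be concentric. Already in dimension one, take $f(v)=e^{v}$ for $v\le 0$ and $f(v)=e^{-2v}$ for $v>0$: the superlevel sets $\{f>t\}=(\ln t,\,-\tfrac12\ln t)$, $t\in(0,1)$, are nested intervals, all translates/dilates of $(-1,1)$, with continuously and monotonically varying radius, yet their centers $\tfrac14\ln t$ move. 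Your estimate $|\gamma-\gamma'|\le C(\alpha'-\alpha)$ only bounds the drift of the center by the change of scale; it does not force the drift to vanish, and the claim that ``monotone nested homothetic ellipsoids share a common limit point and the centers all coincide with it'' is false. So the proposal as written does not prove that $\gamma(x',s)$ is independent of $s$.

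The missing ingredient is already contained in Theorem \ref{ABT} and is how the paper concludes: besides the common ellipsoid, Burchard's theorem supplies the constraint $\sum_{i=0}^{k}\beta_i=\beta_{k+1}$. In the tuple you build, $\beta_i$ for $i\in[0,k]$ is the center of $b_{(k+1),i}E(x'_i,s_i)$, hence determined by the \emph{fixed} auxiliary sets and the fixed coefficients $b_{(k+1),i}$ (only the level $s_{k+1}$ varies; the base points $x'_0,\ldots,x'_{k+1}$ do not, so the $b$'s do not change). Since $b_{(k+1),(k+1)}=1$, the center $\beta_{k+1}$ of $E(x'_{k+1},s_{k+1})$ equals $\sum_{i=0}^k\beta_i$, the same vector for both levels $s$ and $\tilde s$; this is the paper's definition of $\gamma(x')$. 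Your measure-theoretic bookkeeping (openness of the permissibility conditions, covering in $s$, discarding levels with $\rho=0$) is acceptable in spirit, but the center argument must be replaced by this constraint (or another correct argument); nestedness alone cannot deliver it.
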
 

\begin{proof} It is enough to prove the lemma for almost every $x'\in\R^{n-k}$, for almost any pair $s$ and $\tilde{s}$ such that both  $\rho(x_{k+1}',s)$ and $\rho(x_{k+1},\tilde{s})$ are nonzero.\\ 
\indent Take $x_{k+1}'\in\R^{n-k}$ satisfying the conditions of the construction of Proposition \ref{lsE}. Apply the construction, with $r_{k+1}$ chosen so that $\rho(x_{k+1}',r_{k+1})$ is greater than both $\rho(x_{k+1}',s)$ and $\rho(x_{k+1}',\tilde{s})$. This produces a family of measurable sets $\{S_i\subset\R^{n-k}\times\R_+:k+1\neq i\in[0,n]\}$, each with positive measure, such that if $(x'_i,s_i)\in S_i$ then $\{\rho(x_i',s_i)\}_{i=0}^n$ is permissible with respect to $(x'_0,\ldots, x'_n)$ both for $s_{k+1}=s$ and for $s_{k+1}=\tilde{s}$. By Proposition \ref{rreqP} , for almost every $x_{k+1}'\in\R^k$ and almost every pair $(s,\tilde{s})\in\R_+^2$, for almost every family $\{(x'_i,s_i): k+1 \neq i \in[0,n]\}$ with $(x'_i,s_i)\in S_i$, the $(n+1)$-tuple of sets $\{E(x'_i,s_i)\}_{i=0}^n$ produces equality in equation \eqref{rreqls}, both for $s_{k+1}=s$ and for $s_{k+1}=\tilde{s}$.\\ 
\indent  For any $(n+1)$-tuple of sets $\{E(x'_i,s_i)\}_{i=0}^n$ which produces equality in equation \eqref{rreqls} and is such that the set $\{\rho(x'_i,s_i)\}_{i=0}^n$ is permissible, Burchard's Theorem (Theorem \ref{ABT}) gives that for $i\in[0,k+1]$ there exist numbers $\alpha(x'_i,s_i)\in\R_+$, vectors $\beta(x'_i,s_i)\in\R^{n-k}$ satisfying  $\sum_{i=0}^{k}\beta(x'_i,s_i)=\beta(x'_{k+1},s_{k+1})$, and a fixed ellipsoid $\mathcal{E}(x'_i,s_i)$ which is centered at the origin and independent of $i$  such that, up to null sets,
\begin{equation*}
b_{(k+1),i}E(x'_i,s_i)=\beta(x'_i,s_i)+\alpha(x'_i,s_i)\mathcal{E}(x'_i,s_i).
\end{equation*}
Recall that $b_{(k+1),i}$ is given by \eqref{coef} for $i\in[0,k]$ and $b_{(k+1),(k+1)}=1$.  As $\mathcal{E}(x'_i,s_i)$ is determined by $\{(x'_i,s_i)\}_{i=0}^k$, $\mathcal{E}(x'_{k+1},s)=\mathcal{E}(x'_{k+1},\tilde{s})$. Set $\mathcal{E}(x'_{k+1})=\mathcal{E}(x'_{k+1},\tilde{s})$.  Similarly, $\beta(x'_{k+1},s_{k+1})=\sum_{i=0}^{k}\beta(x'_i,s_i)$, thus $\beta(x'_{k+1},s)=\beta(x'_{k+1},\tilde{s})$. Set  $\gamma(x'_{k+1})=\beta(x'_{k+1},s)$. \\
\indent With this terminology, for almost every $x'_{k+1}\in\R^{n-k}$, for almost every pair $s,\tilde{s}\in\R_+\times\R_+$,  both for $s_{k+1}=s$ and for $s_{k+1}=\tilde{s}$, up to a null set, 
\begin{equation*}
E(x'_{k+1},s_{k+1})=\gamma(x'_{k+1})+\alpha(x'_{k+1},s_{k+1})\mathcal{E}(x'_{k+1}).\qedhere
\end{equation*}
\end{proof}

Because superlevel sets are nested, this result extends to:  
\begin{prop} \label{alls}  For every nonnegative extremizer $f$  of \eqref{main}, for all $s\in\R_+$, for almost every $x'\in\R^k$, there exist an ellipsoid centered at the origin $\mathcal{E}(x')\subset\R^{n-k}$, a vector $\gamma(x')\in\R^{n-k}$, and  a number $\alpha(x',s)\in\R$ such that  $E(x',s)=\gamma(x')+\alpha(x',s)\mathcal{E}(x')$ up to a null set.
\end{prop}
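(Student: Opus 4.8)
\textbf{Proof proposal for Proposition~\ref{alls}.}
The plan is to upgrade the ``almost every $s$'' in Lemma~\ref{sok} to ``every $s$'' by exploiting the monotonicity of the superlevel sets in $s$, together with the fact that in Lemma~\ref{sok} the ellipsoid $\mathcal{E}(x')$ and the translation $\gamma(x')$ depend only on $x'$, while only the scaling factor $\alpha$ is allowed to depend on $s$.

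First I would fix the null set $N\subset\R^k$ outside of which Lemma~\ref{sok} applies, and enlarge it to a null set $N'\subset\R^k$ that also contains every $x'$ for which $f_{x'}\notin L^p(\R^{n-k})$ or $f_{x'}$ fails to be positive almost everywhere; this is still null by Fubini and Lemma~\ref{nonneg}. For $x'\notin N'$ there is a set $G_{x'}\subset\R_+$ of full measure such that for every $s\in G_{x'}$ one has $E(x',s)=\gamma(x')+\alpha(x',s)\mathcal{E}(x')$ up to a null set, with $\mathcal{E}(x')$ and $\gamma(x')$ the same for all $s\in G_{x'}$.

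Now fix an arbitrary $s\in\R_+$ and an arbitrary $x'\notin N'$. If $\rho(x',s)=0$ then $|E(x',s)|=0$ and the conclusion is trivial with $\alpha(x',s)=0$, so assume $|E(x',s)|>0$. Since $G_{x'}$ has full measure it meets every interval $(s,s+\eta)$, so I can choose a strictly decreasing sequence $s_j\downarrow s$ with $s_j\in G_{x'}$. Because superlevel sets are nested and $s_j\downarrow s$ strictly, one has the \emph{exact} identity $E(x',s)=\bigcup_j E(x',s_j)$, and for each $j$ the set $E(x',s_j)$ agrees, up to a null set, with $\gamma(x')+\alpha(x',s_j)\mathcal{E}(x')$. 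Taking measures, $\alpha(x',s_j)^{\,n-k}\,|\mathcal{E}(x')|=|E(x',s_j)|$ is a bounded nondecreasing sequence converging to $|E(x',s)|<\infty$ (finite since $f_{x'}\in L^p$), so $\alpha(x',s_j)$ increases to some $\alpha(x',s)\in(0,\infty)$. The nested union $\bigcup_j\big(\gamma(x')+\alpha(x',s_j)\mathcal{E}(x')\big)$ then differs from $\gamma(x')+\alpha(x',s)\mathcal{E}(x')$ only by the boundary, which is a null set; absorbing also the countably many null sets coming from the individual levels gives $E(x',s)=\gamma(x')+\alpha(x',s)\mathcal{E}(x')$ up to a null set. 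As $x'\notin N'$ was arbitrary and $N'$ is null, this holds for almost every $x'$, for the fixed $s$, as claimed.

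The argument is largely routine; the two points requiring a little care are that the approximating sequence $(s_j)$ must be drawn from the $x'$-dependent good set $G_{x'}$ (there is no single universal full-measure set of admissible levels), and that one checks that an increasing family of concentric homothetic ellipsoids with bounded scaling factors has, up to a null set, a single such ellipsoid as its union. Using $s_j\downarrow s$ rather than $s_j\uparrow s$ is deliberate: it yields $\bigcup_j E(x',s_j)=E(x',s)$ exactly, thereby avoiding any difficulty with the level set $\{v:f_{x'}(v)=s\}$, which need not be null for the particular value $s$ under consideration.
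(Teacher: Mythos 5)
Your proposal is correct and follows essentially the same route as the paper's proof: fix a good $x'$, approximate the given level $s$ by a sequence of good levels $s_j\downarrow s$, use nestedness to write $E(x',s)$ as the exact countable union, and pass to the limit of the monotone bounded scaling factors $\alpha(x',s_j)$. Your extra care about the boundary null set of the limiting ellipsoid and the $x'$-dependence of the good set of levels only makes explicit what the paper leaves implicit.
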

\begin{proof}
Fix any $\tilde{s}\in\R_+$. Fix any $x'\in\R^k$ such that for almost every $s\in\R_+$, $E(x',s)=\gamma(x')+\alpha(x',s)\mathcal{E}(x')$ up to a null set. By Lemma \ref{sok}, this condition is satisfied by almost every $x'\in\R^k$.  Because superlevel sets are nested, for any sequence $s_n$ approaching $\tilde{s}$ from above, $E(x',\tilde{s})=\bigcup_{s_n}E(x',s_n)$. By our choice of $x'\in\R^k$, this sequence $s_n$ can be chosen such that for each $n\in\mathbb{N}$, $E(x',s_n)=\gamma(x')+\alpha(x',s_n)\mathcal{E}(x')$ up to a null set. As the union of a countable collection of null sets is a null set, 
$$E(x',\tilde{s})=\bigcup_{s_n}\gamma(x')+\alpha(x',s_n)\mathcal{E}(x')$$ 
up to a null set.  \\
\indent Set $\alpha(x',\tilde{s})=\lim_{n\to\infty}\alpha(x',s_n)$. This limit exists  because $\alpha(x',s_n)$ is nondecreasing and bounded as $n\to\infty$. The first condition holds because superlevel sets are nested. The second because $x'$ was chosen to satisfy the conditions of the construction in Proposition \ref{lsE} which require that $f_{x'_{k+1}}$ is in $L^p(\R^{n-k})$ and thus that each superlevel set of $f_{x'_{k+1}}(v)$ has finite measure. \\
\indent Therefore, up to a null set, 
\begin{equation*}E(x',\tilde{s}) = \gamma(x')+\alpha(x',\tilde{s})\mathcal{E}(x'). \qedhere
\end{equation*}
\end{proof} 

Our next goal is to show that there exists an ellipsoid centered at the origin $\mathcal{E}\subset\R^{n-k}$  such that for every $x'\in \R^k$, $\mathcal{E}(x')=\mathcal{E}$ and further that $\gamma(x')$ is an affine function.  A proof similar to that given for Lemma \ref{sok} holds if the extremizers are known to be continuous. However, for extremizers that are only known to be  measurable, there is an extra step.  We show that the results proved so far imply that any superlevel  set of an extremizer is convex up a null set and thus there exists a  representative of $f\in L^p(\R^n)$ whose  superlevel sets are convex.  This function will have the properties of continuous functions that are relevant to the proof. \\
\begin{defn} \label{alconvexdef}
A set $E$ is almost Lebesgue convex if for almost every pair $(x,y)\in E\times E$ the line segment $\overline{xy}\subset E$ up to a one-dimensional null set.  
\end{defn}
\indent  In Section \ref{sec:ALC} we prove Lemma \ref{alconvex}:  A set $E$ is almost Lebesgue convex if and only if there exists an open convex set $\Cv$ such that $|E\Delta \Cv|=0$ and in this case, $\Cv$ is the convex hull of the Lebesgue points of $E$.
\begin{prop}\label{convex}
For every nonnegative extremizer $f$ of \eqref{main}, for every $s\in\R_+$ the set $E_s=\{x\in\R^n:f(x)>s\}$  is an almost Lebesgue convex set. 
\end{prop}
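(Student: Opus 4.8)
The plan is to prove the stronger statement that for almost every line $\ell\subset\R^n$ the intersection $E_s\cap\ell$ is, up to a one-dimensional null set, an interval; this yields almost Lebesgue convexity in the sense of Definition \ref{alconvexdef} (by inspecting the line joining a generic pair of points of $E_s$) and hence, via the forthcoming Lemma \ref{alconvex}, that $E_s$ agrees with an open convex set up to a null set. We may assume $|E_s|>0$, since otherwise the measure on $E_s\times E_s$ vanishes and the conclusion is vacuous. The geometric content is supplied entirely by Proposition \ref{alls}: it controls the $(n-k)$-dimensional sections of $E_s$ in the directions of the fixed factor $\R^{n-k}$, and since composition with any rotation is a symmetry of \eqref{main} (see \cite{D11}), we may rotate that factor into an arbitrary position.

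Concretely, fix a unit vector $\omega\in\R^n$ and an orthogonal map $R=R_\omega$ taking the last coordinate vector $e_n$ (which lies in the $\R^{n-k}$ factor) to $\omega$; this choice can be made measurably in $\omega$. Because $R$ is a symmetry of \eqref{main} with $|J_R|=1$, the function $f\circ R$ is again a nonnegative extremizer, so Proposition \ref{alls} applies to it: for the given $s$ and almost every $x'\in\R^k$, the set $\{v\in\R^{n-k}:f(R(x',v))>s\}$ differs by an $(n-k)$-dimensional null set from an ellipsoid. A line in direction $e_n$ meets an ellipsoid in an interval, and (Fubini in $\R^{n-k}$) for almost every line $\ell_0$ parallel to $e_n$ over such an $x'$ the exceptional null set meets $\ell_0$ in a one-dimensional null set, so for almost every such $\ell_0$ the set $\{t:f(R(x',v_0+te_n))>s\}$ is an interval up to a one-dimensional null set. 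Transporting by $R$, for almost every line $\ell$ in direction $\omega$ the set $E_s\cap\ell$ is an interval up to a one-dimensional null set. Finally, the invariant measure on the affine Grassmannian of lines disintegrates into a direction $\omega$ and an offset, so letting $\omega$ range over a full-measure set of directions shows $E_s\cap\ell$ is an interval up to a one-dimensional null set for almost every line $\ell\subset\R^n$.

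To obtain almost Lebesgue convexity from this, consider a generic pair $(x,y)\in E_s\times E_s$. The map $(x,y)\mapsto\ell_{xy}$ sending a pair to the line through it is a submersion away from the diagonal, and therefore pulls back null sets to null sets; hence for almost every pair the line $\ell_{xy}$ is one of the good lines above, so $E_s\cap\ell_{xy}$ equals an interval $I$ up to a one-dimensional null set. Moreover, for almost every direction $u$ a Lebesgue density point of $E_s$ is a one-dimensional density point of $E_s$ along the line through it in direction $u$; applied with $u=\omega(x,y)$ this shows that, for almost every pair, $x$ and $y$ are one-dimensional density points of $E_s\cap\ell_{xy}$ and hence lie in $I$. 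Then $\overline{xy}\subseteq\overline{I}$, so $\overline{xy}\subseteq E_s\cap\ell_{xy}$ up to a one-dimensional null set, which is precisely Definition \ref{alconvexdef}.

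The geometric input is immediate once Proposition \ref{alls} is in hand -- convexity is tested on lines, and ellipsoids are convex -- so the real work, and the main obstacle, is the measure-theoretic bookkeeping: choosing $R_\omega$ measurably, and the several invocations of Fubini's theorem needed to pass from ``almost every section / almost every line is good'' to ``for almost every pair of points of $E_s$ the joining segment lies in $E_s$ up to a one-dimensional null set.''
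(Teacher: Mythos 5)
Your proof is correct and rests on the same pillars as the paper's argument: Proposition \ref{alls} supplies convexity (up to null sets) of the $(n-k)$-dimensional sections, a symmetry of \eqref{main} moves those sections into general position, and a Fubini-type disintegration of the measure on pairs transfers the sectional statement to almost every pair of points. The difference is in the slicing. The paper (Lemma \ref{alcSlices} together with the short proof of Proposition \ref{convex}) parameterizes a generic pair $(x,y)$ by a $k$-plane $\theta$ with $x-y\in\theta^{\perp}$, the common projection $x'$, and the two fiber coordinates $v_x,v_y$, so that both points lie in a single $(n-k)$-dimensional cross section; the passage from ``the section is convex up to a null set'' to ``$\overline{xy}$ lies in the section up to a null set for almost every pair'' is then delegated to one direction of Lemma \ref{alconvex}. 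You instead slice all the way down to lines, using rotations of the $\R^{n-k}$ factor to cover all directions and an extra Fubini inside each ellipsoidal fiber. This buys a more elementary disintegration (lines, and pairs of points on them, instead of $4$-tuples $(\theta,x',v_x,v_y)$), at the cost of the final step that places $x$ and $y$ inside the interval $I$. That step is where your justification is loosest: the statement ``for almost every direction $u$, a Lebesgue density point of $E_s$ is a one-dimensional density point along the line in direction $u$'' is delicate if read pointwise in $x$. The clean route is to disintegrate the measure on pairs over lines (a Blaschke--Petkantschin-type factorization): for almost every line $\ell$, the one-dimensional Lebesgue density theorem makes almost every point of $E_s\cap\ell$ a one-dimensional density point of $E_s\cap\ell$, hence almost every pair on $\ell$ consists of two such points, which forces both into $\overline{I}$. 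With that reading, your argument is complete and delivers the proposition.
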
 
We will first show: 
\begin{lem}\label{alcSlices} 
For every nonnegative extremizer $f$ of \eqref{main}, for every $s\in\R_+$, for every $k$-plane $\theta\in\mathcal{M}_{k,n}$, for almost every $x'\in\theta$, and for almost every pair $(v_1,v_2)\in\theta^\perp\times\theta^\perp$ such that $x'+v_1\in E_s$ and $x'+v_2\in E_s$ , the line segment connecting $x'+v_1$ and $x'+v_2$ is contained in $E_s$ up to a one-dimensional null set. 
\end{lem}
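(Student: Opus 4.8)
The plan is to reduce at once to the coordinate splitting $\R^n=\R^k\times\R^{n-k}$ and then to read the assertion off from Proposition \ref{alls} together with the characterization of almost Lebesgue convex sets in Lemma \ref{alconvex}; these two results carry all the content, and everything else is bookkeeping with symmetries and with null sets.

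First I would move the arbitrary affine $k$-plane $\theta$ into standard position. Let $\theta_0\in\mathcal{G}_{k,n}$ be the direction of $\theta$, pick any point $c\in\theta$, and fix $R\in O(n)$ with $R(\R^k\times\{0\})=\theta_0$. Then $T(z):=R(z)+c$ is an invertible affine map of $\R^n$ with $|J_T|=1$, hence a symmetry of \eqref{main} by \cite{D11}, so $g:=f\circ T$ is again a nonnegative extremizer and $\{g>s\}=T^{-1}(E_s)$ for every $s$. By construction $T$ maps $\R^k\times\{0\}$ isometrically onto $\theta$, its linear part $R$ maps $\{0\}\times\R^{n-k}$ isometrically onto $\theta^\perp$ (the orthogonal complement of $\theta_0$), and $T$ maps each standard slice $\{\xi\}\times\R^{n-k}$ onto the affine $(n-k)$-plane $T(\xi,0)+\theta^\perp$; as $\xi$ ranges over $\R^k$ these planes run over exactly $\{q+\theta^\perp:q\in\theta\}$, the correspondence $\xi\mapsto q=T(\xi,0)$ being an isometry of $\R^k$ onto $\theta$. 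Writing a point of $\R^n$ as $T(\xi,\eta)=q+w$ with $q=T(\xi,0)\in\theta$ and $w=R(0,\eta)\in\theta^\perp$, the standard section $\{\eta:g(\xi,\eta)>s\}$ is the preimage, under the linear isometry $\eta\mapsto R(0,\eta)$ of $\R^{n-k}$ onto $\theta^\perp$, of the section $\{w\in\theta^\perp:q+w\in E_s\}$. Since affine isometries preserve convexity and preserve Hausdorff measure in every dimension, a set is almost Lebesgue convex exactly when its isometric image is; so it is enough to prove that, for the fixed $s$ and almost every $\xi\in\R^k$, the section $\{\eta:g(\xi,\eta)>s\}\subset\R^{n-k}$ is almost Lebesgue convex, and then to transport that statement back through the isometries $\xi\leftrightarrow q$ and $\eta\leftrightarrow w$.

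Next I would apply Proposition \ref{alls} to the extremizer $g$: for the given $s$ and almost every $\xi\in\R^k$, the section $\{\eta:g(\xi,\eta)>s\}$ differs by a null set from $\gamma(\xi)+\alpha(\xi,s)\mathcal{E}(\xi)$. If this ellipsoid is a full-dimensional convex body, its interior is an open convex set agreeing with the section up to a null set, so Lemma \ref{alconvex} makes the section almost Lebesgue convex. In the remaining degenerate cases (namely $\alpha(\xi,s)=0$, or $\mathcal{E}(\xi)$ lower-dimensional) the section has measure zero, so the empty set already serves as the open convex set required in Lemma \ref{alconvex}; equivalently, the set of pairs $(v_1,v_2)$ appearing in the present lemma then lies in a null set and the conclusion is vacuous.

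Finally I would unwind Definition \ref{alconvexdef}: almost Lebesgue convexity of the section $C:=\{w\in\theta^\perp:q+w\in E_s\}$ means precisely that for almost every pair $(v_1,v_2)\in\theta^\perp\times\theta^\perp$ with $q+v_1\in E_s$ and $q+v_2\in E_s$ the segment from $v_1$ to $v_2$ lies in $C$ up to a one-dimensional null set; translating this segment by $q$, a map preserving one-dimensional measure, gives the asserted inclusion of the segment from $q+v_1$ to $q+v_2$ in $E_s$ up to a one-dimensional null set, which is the statement of the lemma upon writing $x'$ for $q$. The one place demanding care is the reduction of the second paragraph: checking that $T$ leaves the level $s$ untouched (its Jacobian is $1$), that it carries the two families of sections and the two meanings of ``almost every'' onto one another through genuine isometries, and disposing of the degenerate ellipsoids. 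No new analytic difficulty appears; the substance is entirely in Proposition \ref{alls} and Lemma \ref{alconvex}.
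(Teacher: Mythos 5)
Your proposal is correct and follows essentially the same route as the paper: reduce to the standard slab $\theta=\R^k$ using the affine symmetries of \eqref{main}, invoke Proposition \ref{alls} to identify almost every $(n-k)$-section as an ellipsoid up to a null set, and conclude almost Lebesgue convexity of the sections via Lemma \ref{alconvex}. Your version merely spells out the bookkeeping (isometric change of coordinates, degenerate/null sections) that the paper leaves implicit.
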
 

 Note that unlike most claims in this paper, which are of the almost everywhere variety, this result holds for every superlevel set and every  $k$-plane. 
\begin{proof} 
For any $k$-plane $\theta$ there is an affine map $A$ taking $\theta$ to $\R^k$. As $A$ is affine, the mapping $f\mapsto f\circ A$  is a symmetry of \eqref{main}.  Therefore $f\circ A$ is also an nonnegative extremizer of \eqref{main} and it suffices to consider the case where $\theta=\R^k\subset \R^n$. By Proposition \ref{alls}, for all $s\in\R_+$, for almost every $x'\in\R^k$, $E(x',s)$ is an ellipsoid, and hence convex, up to an $(n-k)$-dimensional null set, so the claim follows from the only if direction of Lemma \ref{alconvex}.  
\end{proof} 

\begin{proof}[Proof of Proposition \ref{convex}]

Factor $\R^n\times\R^n$ as the product $\mathcal{G}_{k,k+1}\times\R^k\times\R^{n-k}\times\R^{n-k}$, losing a null set, as follows.  For $x=(x_1,\cdots, x_n)$ write $x''=(x_1, \ldots, x_{k+1})$.  Almost every pair $(x'',y'')$ determines a line $\ell$ in $\R^{k+1}$.  There is a unique $k$-plane, $\theta$, in $\R^{k+1}$ that passes through the origin and is perpendicular to $\ell$ .  Let $x'\in\R^k$ denote the projection of $x$ onto $\theta$. As $\theta$ is perpendicular to $\ell$, the projection of $y$ onto $\theta$ is also $x'$. Let $v_x$ be the projection of $x$ onto $\theta^\perp$, the $(n-k)$-dimensional subspace perpendicular to $\theta$, and similarly for $v_y$. The 4-tuple $(\theta,x',v_x,v_y)$ completely specifies the pair $(x,y)$. \\
\indent By Lemma \ref{alcSlices} the set of  4-tuples $(\theta,x',v_x,v_y)$ such that the line segment connecting $x'+v_x$ and $x'+v_y$ is contained in $E_s$ up to a null set has full measure. Thus the set of $(x,y)$ such that $\overline{xy}\subset E_s$ up to a one-dimensional null set has full measure as well.
\end{proof} 

\begin{prop}\label{nicef}  For every nonnegative extremizer $f\in L^p(\R^n)$  of \eqref{main}, there exists $\tilde{f}\in L^p(\R^n)$ such that $\tilde{f}=f$ almost everywhere and every superlevel set of $\tilde{f}$ is open and convex.
\end{prop}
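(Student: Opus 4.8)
The plan is to construct $\tilde f$ directly from the convex representatives of the superlevel sets of $f$ furnished by Proposition \ref{convex} and Lemma \ref{alconvex}. For each $s\in\R_+$, Proposition \ref{convex} says $E_s=\{x:f(x)>s\}$ is almost Lebesgue convex, so by Lemma \ref{alconvex} there is an open convex set $C_s$ with $|E_s\,\Delta\,C_s|=0$, namely the convex hull of the Lebesgue (density-one) points of $E_s$; in particular $C_s$ is uniquely determined by $E_s$, and $C_s=\emptyset$ precisely when $|E_s|=0$. I would then set
\[
\tilde f(x)=\sup\{\,s>0:x\in C_s\,\},
\]
with the convention $\sup\emptyset=0$, and verify that $\tilde f$ has the required properties.

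The key preliminary fact is that the family $\{C_s\}_{s>0}$ is \emph{genuinely} nested: $s<t$ implies $C_t\subseteq C_s$. When $|E_t|=0$ — which includes the case $|E_s|=0$, since $E_t\subseteq E_s$ — this is trivial, so assume both sets are nonempty. Then $C_t\setminus C_s$ is a null set, because up to null sets it is contained in $E_t\setminus E_s=\emptyset$; since $C_t\setminus\overline{C_s}$ is open it must therefore be empty, so $C_t\subseteq\overline{C_s}$, and hence $C_t=\mathrm{int}(C_t)\subseteq\mathrm{int}(\overline{C_s})=C_s$, using the standard fact that $\mathrm{int}(\overline C)=C$ for a nonempty open convex set $C$. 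Granting nestedness, for $s\ge 0$ one gets $\{\tilde f>s\}=\bigcup_{t>s}C_t$, a union of open convex sets totally ordered by inclusion, hence open and convex; for $s<0$ one has $\{\tilde f>s\}=\R^n$ because $\tilde f\ge 0$ everywhere. Thus every superlevel set of $\tilde f$ is open and convex, and in particular $\tilde f$ is Borel measurable.

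It remains to identify $\tilde f$ with $f$ almost everywhere, which also gives $\tilde f\in L^p(\R^n)$. Writing $E_s=\bigcup_{t\in\Q,\,t>s}E_t$ and, using nestedness, $\{\tilde f>s\}=\bigcup_{t\in\Q,\,t>s}C_t$, the symmetric difference $\{\tilde f>s\}\,\Delta\,E_s$ is contained in $\bigcup_{t\in\Q,\,t>s}\bigl(C_t\,\Delta\,E_t\bigr)$, a countable union of null sets, hence is null for every $s$. Since two nonnegative functions whose superlevel sets agree up to null sets for all $s$ coincide almost everywhere, $\tilde f=f$ a.e.; in particular $\tilde f$ is finite a.e. and lies in $L^p(\R^n)$.

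I expect the nestedness step to be the only real obstacle: it is the single place where an ``up to a null set'' statement must be upgraded to an exact set inclusion, and this is precisely where the topology of convex sets — through Lemma \ref{alconvex} and the identity $\mathrm{int}(\overline C)=C$ — does essential work. The remaining steps are routine manipulations with layer-cake formulas and countable unions of null sets.
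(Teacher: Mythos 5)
Your proposal is correct and follows essentially the same route as the paper: both define $\tilde f$ from the open convex hulls $\Cv_s$ of the Lebesgue points of $E_s$ (your $\sup$ definition agrees with the paper's layer-cake integral $\int_0^\infty \one_{\Cv_s}\,ds$ once nestedness is known), and both identify $\{\tilde f>s\}=\bigcup_{t>s}\Cv_t$. The only divergence is in the nestedness step, where the paper argues more directly that $E_r\subset E_t$ forces the Lebesgue points of $E_r$ to be Lebesgue points of $E_t$, so the convex hulls are nested; your $\mathrm{int}(\overline{C})=C$ argument reaches the same conclusion.
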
 

\begin{proof} 
 Let $f$ be any nonnegative extremizer of \eqref{main}. Let $E_s=\{x:f(x)>s\}$. By Proposition \ref{convex} for every $s\in\R_+$,  the convex hull of the Lebesgue points of $E_s$, $\Cv_s$, is open and satisfies $|E_s\Delta \Cv_s|=0$. Define 
\begin{equation}\label{thegoodf} 
\tilde{f}(x)= \int_0^\infty \one_{\Cv_s}(x)ds.
\end{equation} 
Because $|E_s\Delta \Cv_s|=0$, $\tilde{f}(x)=f(x)$ almost everywhere . \\
\indent   Observe that the sets $\Cv_s$ are nested. Take  $r>t>0$.   $E_r\subset E_t$, thus the set of Lebesgue points of $E_r$ is contained in the set of Lebesgue points of $E_t$.  As $\Cv_r$ and $\Cv_t$ are the convex hulls of the Lebesgue points of $E_r$ and $E_t$ respectively, $\Cv_r\subset\Cv_t$.\\
\indent  For each $s\in\R_+$, define $\tilde{E}_s=\{x:\tilde{f}(x)>s\}$. Using \eqref{thegoodf} and that the sets $\Cv_s$ are nested, $\tilde{E}_s= \bigcup_{t>s} \Cv_t$. As the union of open sets is open, $\tilde{E}_s$ is open. Further, as the union of nested convex sets is convex, $\tilde{E}_s$ is also convex. 
\end{proof} 
\begin{cor} Any nonnegative extremizer $f$ of \eqref{main} agrees almost everywhere with a lower semi-continuous function. \end{cor}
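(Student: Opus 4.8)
The plan is to read this off directly from Proposition \ref{nicef} together with the standard characterization of lower semi-continuity: a function $g\colon\R^n\to[0,\infty)$ is lower semi-continuous if and only if every superlevel set $\{x:g(x)>s\}$ is open.

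Given a nonnegative extremizer $f$ of \eqref{main}, I would first apply Proposition \ref{nicef} to obtain $\tilde f\in L^p(\R^n)$ with $\tilde f=f$ almost everywhere and with the property that every superlevel set $\tilde E_s=\{x:\tilde f(x)>s\}$ is open (Proposition \ref{nicef} in fact also gives convexity, which is not needed here). Then, for any point $x_0$ and any $s<\tilde f(x_0)$, the open set $\tilde E_s$ is a neighborhood of $x_0$ on which $\tilde f>s$, so $\liminf_{x\to x_0}\tilde f(x)\ge s$; letting $s\uparrow\tilde f(x_0)$ yields $\liminf_{x\to x_0}\tilde f(x)\ge\tilde f(x_0)$, which is exactly lower semi-continuity of $\tilde f$ at $x_0$. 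Since $x_0$ was arbitrary, $\tilde f$ is lower semi-continuous, and as $\tilde f=f$ a.e.\ it is the required representative.

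I do not expect any genuine obstacle: the corollary is essentially a restatement of Proposition \ref{nicef} via the equivalence between lower semi-continuity and openness of superlevel sets. The only thing worth emphasizing is that the assertion is about the almost-everywhere equivalence class of $f$ rather than a fixed pointwise representative — the raw measurable extremizer one begins with need not itself be lower semi-continuous — so the content lies entirely in Proposition \ref{nicef}, which furnishes the good representative $\tilde f$.
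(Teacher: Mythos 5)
Your proposal is correct and is exactly the paper's (implicit) argument: the corollary is stated as an immediate consequence of Proposition \ref{nicef}, since a function all of whose superlevel sets are open is lower semi-continuous, and $\tilde f$ is the a.e.\ representative. Nothing further is needed.
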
 

\begin{cor}\label{crho}  Let $f$ be a nonnegative extremizer of \eqref{main} whose superlevel sets are open and convex. For every $s\in\R_+$,  the function $x'\to\rho(x',s)$ is continuous on the interior of $\{x': \rho(x',s)\neq 0\}$. \end{cor}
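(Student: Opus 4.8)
The plan is to identify $\rho(\cdot,s)$, up to a dimensional constant, with the $(n-k)$-th root of the volume of the horizontal slices of the superlevel set $E_s=\{x:f(x)>s\}$, and then to exploit the convexity of $E_s$ via the Brunn--Minkowski inequality. By definition $\rho(x',s)=\big(|E(x',s)|/\omega_{n-k}\big)^{1/(n-k)}$, where $\omega_{n-k}$ is the volume of the unit ball in $\R^{n-k}$ and $E(x',s)=\{v\in\R^{n-k}:(x',v)\in E_s\}$ is the slice of $E_s$ over $x'$. Since by hypothesis $E_s$ is open and convex in $\R^n$, the slice $E(x',s)$ is open and convex in $\R^{n-k}$ for every $x'$, and it is nonempty exactly when $x'$ lies in the projection $U$ of $E_s$ onto $\R^k$. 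As a nonempty open convex set has positive Lebesgue measure while the empty set has measure zero, $U=\{x':\rho(x',s)\neq0\}$; being the image of an open convex set under a linear projection, $U$ is itself open (and convex), so it coincides with the interior of $\{x':\rho(x',s)\neq0\}$. It therefore suffices to prove that $g(x'):=|E(x',s)|^{1/(n-k)}$ is continuous on $U$.

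First I would establish that $g$ is concave on $U$. For $x'_0,x'_1\in U$ and $t\in[0,1]$, convexity of $E_s$ gives the inclusion $E(tx'_0+(1-t)x'_1,s)\supseteq t\,E(x'_0,s)+(1-t)\,E(x'_1,s)$, the Minkowski combination in $\R^{n-k}$; since this combination of open sets is open, hence measurable, the Brunn--Minkowski inequality applied in $\R^{n-k}$ yields
\begin{equation*}
|E(tx'_0+(1-t)x'_1,s)|^{1/(n-k)}\geq t\,|E(x'_0,s)|^{1/(n-k)}+(1-t)\,|E(x'_1,s)|^{1/(n-k)},
\end{equation*}
which is precisely concavity of $g$. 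Next I would rule out the value $+\infty$: because $f\in\lp$ we have $|E_s|<\infty$ for $s>0$, so by Fubini $g(x')<\infty$ for almost every $x'$. If $g(x'_0)=+\infty$ for some $x'_0\in U$, then for an arbitrary $x'\in U$ one can (using openness of $U$) pick $y'\in U$ with $x'$ strictly between $x'_0$ and $y'$, say $x'=t x'_0+(1-t)y'$ with $t\in(0,1)$, and concavity plus $g\geq0$ forces $g(x')\geq t\,g(x'_0)+(1-t)\,g(y')=+\infty$; thus $g\equiv+\infty$ on $U$, contradicting its almost-everywhere finiteness. Hence $g$ is finite throughout $U$.

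Finally, a finite concave function on an open convex subset of $\R^k$ is continuous there (indeed locally Lipschitz), so $g$ is continuous on $U$, and therefore so is $\rho(\cdot,s)=\omega_{n-k}^{-1/(n-k)}g$; since $U$ is the interior of $\{x':\rho(x',s)\neq0\}$, this is the assertion. The whole argument is a routine assembly of classical convexity facts, so there is no substantive obstacle. The only steps needing any care are the bookkeeping that the horizontal slices of $E_s$ are genuinely open and convex and that the locus $\{\rho(\cdot,s)\neq0\}$ equals the open set $U$ (so that the word ``interior'' in the statement is harmless), and the exclusion of infinite slice volumes — which is the one place the hypothesis $f\in\lp$ enters.
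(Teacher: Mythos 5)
Your proof is correct and is essentially the paper's own argument: convexity of $E_s$ gives the Minkowski-sum inclusion between slices, Brunn--Minkowski (in $\R^{n-k}$) turns this into concavity of $x'\mapsto\rho(x',s)$, and continuity follows from the fact that a finite concave function is continuous on an open set. Your additional bookkeeping (that $\{x':\rho(x',s)\neq0\}$ is already open, being the projection of the open convex set $E_s$, and that slice volumes are finite via $f\in\lp$) only makes explicit points the paper leaves implicit, and you correctly use the exponent $1/(n-k)$ where the paper's displayed inequality has a typographical $1/n$.
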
 

\begin{proof}
 Fix any $x'\in\R^k$ and $y'\in\R^k$ such that $|E(x',s)|\neq0$ and $|E(y',s)|\neq0$. By the Brunn-Minkowski inequality, 
\begin{equation*}
|tE(x',s) +(1-t)E(y',s) )|^{1/n}\geq t|E(x',s) |^{1/n}+(1-t)|E(y',s) |^{1/n}.
\end{equation*}
By convexity of the superlevel set $E_s$,  
\begin{equation*}
tE(x',s) +(1-t)E(y',s) ) \subset E (tx'+(1-t)y', s).
\end{equation*}
 Thus, 
\begin{equation*} 
\rho(tx'+(1-t)y',s)\geq  t\rho(x',s)+(1-t)\rho(y',s).
\end{equation*}
Hence $x'\to\rho(x',s)$ is concave on $\{x': \rho(x',s)\neq 0\}$. Using that a concave function on an open set is continuous, $x'\to\rho(x',s)$ is continuous on the interior of the set $\{x': \rho(x',s)\neq 0\}$.  
\end{proof}

\begin{prop} \label{samegeox} Let $f$ be a nonnegative extremizer of \eqref{main} whose superlevel sets are open and convex.   There exist an ellipsoid centered at the origin $\mathcal{E}\subset\R^{n-k}$, an affine function $\gamma(x')$, and numbers $\alpha(x',s)\in[0,\infty)$  such that for every $(x',s)\in \R^k\times\R_+$ satisfying $|E(x',s)|>0$
\begin{equation*}
	E(x',s)=\gamma(x')+\alpha(x',s)\mathcal{E}.
\end{equation*} 
\end{prop}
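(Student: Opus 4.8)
\noindent\textbf{Proof plan for Proposition~\ref{samegeox}.} The plan is to extract, on top of Proposition~\ref{alls}, three facts: (i) the shape of the cross-sections is independent of $x'$, i.e.\ $\mathcal{E}(x')$ may be taken to be one fixed ellipsoid $\mathcal{E}$; (ii) the center $\gamma(x')$ is an affine function of $x'$; and (iii) with $f$ replaced by its representative from Proposition~\ref{nicef} (whose superlevel sets are open and convex), the identity $E(x',s)=\gamma(x')+\alpha(x',s)\mathcal{E}$ holds for \emph{every} $(x',s)$ with $|E(x',s)|>0$, as a genuine equality of sets. Note that Proposition~\ref{alls} determines the pair $(\mathcal{E}(x'),\alpha(x',s))$ only up to a common positive scalar, so (i) means that for almost every pair $x',y'$ the origin-centered ellipsoids $\mathcal{E}(x')$ and $\mathcal{E}(y')$ are positive dilates of one another; this ambiguity is removed at the end by, say, normalizing the volume of $\mathcal{E}$. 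Throughout, $f$ denotes the representative of Proposition~\ref{nicef}.

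For (i) and (ii) the tool is the construction in the proof of Proposition~\ref{lsE}, which (upon a routine extension allowing the base points $x'_0,\ldots,x'_k$ to form a general affinely independent configuration rather than a scaled regular simplex) produces, for almost every affinely independent $(x'_0,\ldots,x'_k)$, for $x'_{k+1}$ ranging over a nonempty open set $U=U(x'_0,\ldots,x'_k)$, and for suitably small levels $s_0,\ldots,s_{k+1}$ together with auxiliary data $(x'_i,s_i)$ for $i\in[k+2,n]$, an $(n+1)$-tuple that is permissible with respect to $(x'_0,\ldots,x'_n)$ and, by Proposition~\ref{rreqP}, produces equality in \eqref{rreqls}; here Lemma~\ref{nonneg} is used so that the levels can be chosen making the radii $\rho(x'_i,s_i)$ as large as needed, and $U$ may be taken to contain the interior of the simplex spanned by $x'_0,\ldots,x'_k$, where each $|b_{(k+1),j}|<1$. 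Applying Theorem~\ref{ABT} to such a valid tuple shows that the sets $b_{(k+1),i}E(x'_i,s_i)$, $i\in[0,k+1]$, are translates of dilates of \emph{one} origin-centered ellipsoid, so $\mathcal{E}(x'_0)=\cdots=\mathcal{E}(x'_{k+1})$ whenever these points occur in a common valid tuple; since for almost every pair $x',y'$ one can take $x'_0=x'$ and $x'_1=y'$, Fubini's theorem yields (i). Theorem~\ref{ABT} also gives $\sum_{i=0}^k\beta(x'_i,s_i)=\beta(x'_{k+1},s_{k+1})$, and comparing centers (so $\beta(x'_i,s_i)=b_{(k+1),i}\gamma(x'_i)$) gives $\sum_{i=0}^k b_{(k+1),i}\gamma(x'_i)=\gamma(x'_{k+1})$. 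Since the $b_{(k+1),j}$ of \eqref{coef} are the barycentric coordinates of $x'_{k+1}$ relative to $x'_0,\ldots,x'_k$ -- they sum to $1$, satisfy $\sum_j b_{(k+1),j}x'_j=x'_{k+1}$, and depend affinely on $x'_{k+1}$ -- this forces $\gamma$ to agree almost everywhere on $U$ with the affine map $x'\mapsto\sum_j b_{(k+1),j}(x')\gamma(x'_j)$; letting the base configuration vary, the corresponding open sets cover $\R^k$ and the affine maps agree on overlaps, so $\gamma$ agrees almost everywhere with a single affine function, again denoted $\gamma$. This proves (ii).

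For (iii): by (i), (ii) and Proposition~\ref{alls}, for almost every $x'$ with $\rho(x',s)>0$ the open convex set $E(x',s)$ (a slice of the open convex set $E_s$, via Proposition~\ref{nicef}) differs from the open convex set $\gamma(x')+\alpha(x',s)\mathcal{E}$ by a null set, hence equals it. The set $\{x':\rho(x',s)>0\}$ is open since $E_s$ is open, and $x'\mapsto\rho(x',s)$ is continuous on it by Corollary~\ref{crho}; since $|E(x',s)|=\alpha(x',s)^{n-k}|\mathcal{E}|$, the quantity $\alpha(\cdot,s)$ is a fixed positive multiple of $\rho(\cdot,s)$, hence continuous there, and $\gamma$ is continuous being affine. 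For an arbitrary $x'$ with $\rho(x',s)>0$, choose $x'_m\to x'$ through the full-measure set just described: then $E(x'_m,s)=\gamma(x'_m)+\alpha(x'_m,s)\mathcal{E}$ converges in the Hausdorff metric to $\gamma(x')+\alpha(x',s)\mathcal{E}$, while $E(x',s)\subseteq\liminf_m E(x'_m,s)$ and $\limsup_m E(x'_m,s)\subseteq\overline{E(x',s)}$ because $E_s$ is open and convex; since $\gamma(x')+\alpha(x',s)\mathcal{E}$ is open and a nonempty open convex set is the interior of its closure, these inclusions force $E(x',s)=\gamma(x')+\alpha(x',s)\mathcal{E}$, as required.

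I expect the main obstacle to lie in the second paragraph: verifying that the permissibility conditions \eqref{admA}--\eqref{admB} can be arranged for a family of configurations rich enough both to place any two prescribed points $x',y'$ inside a common valid $(n+1)$-tuple and to pin down $\gamma$ as affine on an open set for every fixed base simplex, and organizing the intrinsic dilation ambiguity in $(\mathcal{E}(x'),\alpha(x',s))$ so that it does not obstruct the conclusion that $\mathcal{E}(x')$ is independent of $x'$.
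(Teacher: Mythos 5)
Your overall architecture (adapt the construction of Proposition~\ref{lsE}, feed the resulting tuples into Proposition~\ref{rreqP} and Theorem~\ref{ABT}, read off constancy of the shape from the common ellipsoid and affinity of the center from $\sum_{i=0}^k\beta_i=\beta_{k+1}$ together with the barycentric interpretation of $b_{(k+1),j}$, then use openness/convexity from Proposition~\ref{nicef} and Corollary~\ref{crho} to upgrade to exact equality for every $(x',s)$) is the same as the paper's, and your part (iii) is a careful version of a step the paper treats tersely. The genuine gap is in the step you yourself flag and then treat as a ``routine extension'': replacing the scaled regular simplex of Lemma~\ref{us} by an arbitrary affinely independent, far-apart base configuration $x'_0,\ldots,x'_k$ with $x'_{k+1}$ ranging over (most of) the interior of the simplex. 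For such a configuration the weights $b_{(k+1),j}$ are prescribed by the position of $x'_{k+1}$, and the permissibility condition \eqref{admA} for $i\in[0,k]$ is a \emph{comparability} constraint on the weighted radii $|b_{(k+1),j}|\rho(x'_j,s_j)$: no single one may exceed the sum of the others plus $\rho(x'_{k+1},s_{k+1})$, while the $i=k+1$ inequality simultaneously caps $\rho(x'_{k+1},s_{k+1})$ from above by $\sum_j|b_{(k+1),j}|\rho(x'_j,s_j)$. Your proposed mechanism --- use Lemma~\ref{nonneg} to ``make the radii as large as needed'' --- only helps with the one-sided conditions (\eqref{admB} and $i=k+1$); it does not produce comparability. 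Since at this stage extremizers are not known to be continuous, $s\mapsto\rho(x'_j,s)$ is merely nonincreasing and right-continuous and may have large jumps, so at fixed, far-apart base points the attainable radius values can in principle be lacunary and mutually misaligned; already for $k=1$ one can write down attainable-radius sets for which, at a fixed interior $x'_{k+1}$ (fixed weights), \emph{no} choice of levels satisfies \eqref{admA}. This is exactly the difficulty the paper's localization is designed to kill: all of $x'_0,\ldots,x'_{k+1}$ are kept in a small ball around $z'_{k+1}$, so Lebesgue points of $\rho$ (and, in the present proposition, Corollary~\ref{crho}) let one choose all radii within $\tolrho$ of a single value $\rho(z'_{k+1},r_{k+1})$ and all weights within $\tolb$ of $\tfrac1{k+1}$, after which permissibility is an explicit computation.

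Concretely, the damage is to your claims (i) and (ii). For (i), the pair statement can be rescued, but only by using the extra freedom you do not articulate: choose the radii first and then place $x'_{k+1}$ so that the weights balance them; as written, ``for almost every pair $x',y'$ one can take $x'_0=x'$, $x'_1=y'$'' plus Fubini is not justified. For (ii) the problem is worse: to conclude that $\gamma$ agrees a.e.\ on an open set $U$ of $x'_{k+1}$ with the affine map $x'\mapsto\sum_j b_{(k+1),j}(x')\gamma(x'_j)$, you need valid tuples for a.e.\ $x'_{k+1}\in U$ with the \emph{same} base points, and there the weights are dictated by $x'_{k+1}$, so the balancing freedom is gone and permissibility may fail on sets of positive measure in $U$. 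Note that the global detour is also unnecessary: as in the paper, it suffices to prove the purely local statement (for a.e.\ $z'$ there is a ball on which $\mathcal{E}(\cdot)$ is a.e.\ constant and $\gamma$ is a.e.\ affine, obtained by letting only $x'_{k+1}$ vary over $B(z'_{k+1},\delta)$ while keeping the nearby regular-simplex base of Proposition~\ref{lsE}, with $b_{(k+1),j}$ now continuous in the $k+2$ variables), and then patch by connectedness of $\R^k$, since affine maps and origin-centered ellipsoid shapes agreeing a.e.\ on overlapping balls agree globally. If you rewrite the second paragraph along these local lines, the rest of your argument goes through.
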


 \begin{proof}
By Proposition \ref{alls}, for all $s\in\R_+$, for almost every $x'\in\R^k$, there exist an ellipsoid centered at the origin $\mathcal{E}(x')\subset\R^{n-k}$, a vector $\gamma(x')\in\R^{n-k}$, and  a number $\alpha(x',s)\in\R$ such that  up to a null set, 
\begin{equation}\label{Estart} 
E(x',s)=\gamma(x')+\alpha(x',s)\mathcal{E}(x'). 
\end{equation}  
As $E(x',s)$ is open and convex, when $|E(x',s)|>0$ there is true equality in \eqref{Estart}, not just equality up to a null set. It remains to see that $\mathcal{E}(x')$ is independent of $x'$ and $\gamma(x')$ is an affine function.  \\
\indent  By the convexity established in Proposition \ref{nicef}, it suffices to show that for almost every $z'\in \R^k$ there exists some $\delta>0$ such that for almost every $x'\in B(z',\delta)$, $\mathcal{E}(z')=\mathcal{E}(x')$ and $\gamma(x')$ is almost everywhere equal to an affine function on $B(z',\delta)$.\\
\indent   Fix $z_{k+1}'\in\R^k$ satisfying the conditions of the construction in Proposition \ref{lsE} and take $s_{k+1}\in\R_+$ such that $z'_{k+1}$ is in the interior of $\{x': \rho(x',s_{k+1})\neq 0\}$. Such an $s_{k+1}$ always exists by positivity of nonnegative extremizers and convexity of each superlevel set. \\
\indent By essentially the same argument used for the construction in Proposition \ref{lsE}, there exist  $\delb>0$,  $\tolrho>0$, and  $\{S_i:k+1\neq i\in[0,n]\}$ such that if $|\rho(x'_{k+1},s_{k+1})-\rho(z'_{k+1},s_{k+1})|<\tolrho$ and $x'_{k+1}\in B(z'_{k+1},\delta_b)$ and $(x_i',s_i)\in S_i$ for $i\in[0,n]\setminus\{k+1\}$,  then $\{\rho(x_i',s_i)\}_{i=0}^n$ is permissible with respect to $(x'_0,\ldots,x'_{n+1})$. \\
\indent The only change required is in the definition of $\delb$. Whereas in the original proof $b_{(k+1),j}$ is viewed as a function of the $(k+1)$ variables $\{x'_i\}_{i=0}^k$ with $x'_{k+1}$ fixed, here $x'_{k+1}$ varies as well. Thus $b_{(k+1),j}$  is a function of the $(k+2)$ variables $\{x'_i\}_{i=0}^{k+1}$.  As this function is continuous, there exists $\delb>0$ such that if $x'_i\in B(z_i',\delb)$ for $i\in[0,k+1]$, then $|b_{(k+1),j}(x_0',\ldots,x_{k+1}')-b_{(k+1),j}(z_0',\ldots,z_{k}',z'_{k+1})|<\tolb $, where for $i\in[0,k]$, $z_i'$ is fixed as in Proposition \ref{lsE}. As there is an extra $\tolrho$ in the computation of permissibility  in Proposition \ref{lsE}, the same computation gives permissibility here. \\
 \indent By Corollary \ref{crho}, there exists $\delta_1>0$ such that for all $x'\in B(z_{k+1}',\delta_1)$, $|\rho(x',s_{k+1})-\rho(z_{k+1}',s_{k+1})|<\tolrho$. Set $\delta_{z_{k+1}'}=\min(\delta_1,\delb)$. Then, for every $x'_{k+1}\in B(z'_{k+1},\delta_{z_{k+1}'})$ if $(x_i',s_i)\in S_i$ for $i\in[0,n]\setminus\{k+1\}$,  $\{\rho(x_i',s_i)\}_{i=0}^n$ is permissible with respect to $(x'_0,\ldots,x'_{n+1})$. \\ 
\indent By Proposition \ref{rreqP}, for almost every $(z_{k+1}',s_{k+1})\in\R^k\times\R_+$ satisfying the conditions above, for almost every for almost every family $\{(x'_i,s_i): k+1 \neq i \in[0,n]\}$ with $(x'_i,s_i)\in S_i$,  for almost every  $x'_{k+1}\in B(z'_{k+1},\delta_{z_{k+1}'})$, the family $\{E(x_i',s_i)\}_{i=0}^n$ produces equality in \eqref{rreqls} and the family  $\{\rho(x_i',s_i)\}_{i=0}^n$ is permissible. Thus for almost every $z_{k+1}'\in\R^k$, there exist an $s_{k+1}\in\R^k$ and a family $\{(x'_i,s_i): k+1 \neq i \in[0,n]\}$ such that for almost every  $x'_{k+1}\in B(z'_{k+1},\delta_{z_{k+1}'})$, the $(n+1)$-tuple of sets $\{E(x_i',s_i)\}_{i=0}^n$ satisfies the conditions of Burchard's theorem.   \\
\indent Applying Burchard's theorem and Lemma \ref{sok} gives that there exist vectors $\beta(x'_i)\in\R^{n-k}$ satisfying $\sum_{i=0}^{k}\beta(x'_i)=\beta(x'_{k+1})$, numbers $\alpha(x'_i,s_i)\in\R_+$ and a fixed ellipsoid $\mathcal{E}(x'_i)$ which is centered at the origin and  independent of $i$, such that up to null sets, 
\begin{equation*}
b_{(k+1),i}(x'_0,\ldots,x'_{k+1})E(x'_i,s_i)=\beta(x'_i)+\alpha(x'_i,s_i)\mathcal{E}(x'_i). 
\end{equation*}

Therefore $\mathcal{E}(x'_{k+1})$ is determined by $\{x'_i\}_{i=0}^k$ and must be the same for almost every $x_{k+1}'\in B(z'_{k+1},\delta_{b,s_{k+1}})$.  \\
\indent Set\footnote{ Note that $\gamma(x'_{k+1})=\beta(x'_{k+1})$ so this definition agrees with the definition of $\gamma(x'_{k+1})$ given in Lemma \ref{sok}. }  $\gamma(x'_i)=\beta(x'_i)/b_{(k+1),i}(x'_0,\ldots,x'_{k+1})$.  
Therefore for almost every $x_{k+1}'\in B(z'_{k+1},\delta_{b,s_{k+1}})$, 
\begin{equation*}
\gamma(x'_{k+1})=\sum_{i=0}^{k}b_{(k+1),i}(x'_0,\ldots,x'_{k+1})\gamma(x'_i).
\end{equation*}
For $i\in[0,k]$, $b_{(k+1),i}(x'_0,\ldots,x'_{k+1})$ defined by \eqref{coef}) is an affine function of $x_{k+1}$, thus $\gamma(x'_{k+1})$ is as well. \\
\end{proof} 

\end{subsection}

\begin{subsection} {Proof of Proposition \ref{radial} } 
\begin{prop}\label{ksymradial} Let $f$ be a nonnegative extremizer of \eqref{main} whose superlevel sets are open and convex. Let $v\to f^\sharp(x',v)$ be the symmetric nonincreasing rearrangement of $v\to f(x',v)$ for each $x'\in\R^{(n-k)}$. Then there exist $ \gamma(x'):\R^k\to\R^{n-k}$ an affine function and $ L:\R^{n-k}\to\R^{n-k}$ an invertible linear map such that $f(x',L(v)+\gamma(x'))=f^\sharp(x',v)$.s \end{prop}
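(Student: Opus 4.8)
The plan is to assemble Proposition \ref{ksymradial} from the structural results already established, principally Proposition \ref{samegeox}. First I would record what Proposition \ref{samegeox} gives: a single ellipsoid $\mathcal{E}$ centered at the origin, an affine function $\gamma(x')$, and numbers $\alpha(x',s)\ge 0$ with $E(x',s)=\gamma(x')+\alpha(x',s)\mathcal{E}$ for every $(x',s)$ with $|E(x',s)|>0$. Write $\mathcal{E}=L(B)$ where $B$ is the unit ball of $\R^{n-k}$ and $L$ is an invertible linear map (any ellipsoid centered at the origin is the linear image of a ball; $L$ can be taken symmetric positive definite, but all we need is invertibility). Then the key observation is that after the change of variables $v\mapsto L(v)+\gamma(x')$, the superlevel sets of $v\mapsto f(x',L(v)+\gamma(x'))$ are exactly the concentric balls $\alpha(x',s)B$: indeed $\{v: f(x',L(v)+\gamma(x'))>s\}=L^{-1}(E(x',s)-\gamma(x'))=\alpha(x',s)B$.

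Next I would argue that a measurable function on $\R^{n-k}$ all of whose superlevel sets are (open) balls centered at the origin is, up to a null set, equal to its own symmetric nonincreasing rearrangement. This is essentially the uniqueness half of the layer-cake/rearrangement correspondence: since $\{v:g(v)>s\}$ and $\{v:g^\sharp_{x'}(v)>s\}$ are both balls centered at $0$ with the same measure (rearrangement preserves measures of superlevel sets), they coincide for every $s$, hence $g=g^\sharp_{x'}$ a.e.\ by the layer-cake decomposition (Proposition 4). Applying this with $g(v)=f(x',L(v)+\gamma(x'))$ yields $f(x',L(v)+\gamma(x'))=f^\sharp(x',v)$ for a.e.\ $v$, for every $x'$ for which the above holds — and by Proposition \ref{samegeox} this is every $x'$ with some superlevel set of positive measure, which by Lemma \ref{nonneg} (positivity of extremizers) and Fubini is a.e.\ $x'$.

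One point that needs a small amount of care: Proposition \ref{samegeox} is stated for extremizers whose superlevel sets are open and convex, which matches the hypothesis of Proposition \ref{ksymradial} exactly, so no reduction is needed there. I should also note that the openness of $\mathcal{E}$ (so that strict superlevel sets are open balls) is consistent with $f^\sharp$ being defined via strict superlevel sets in the Notation/Definition of rearrangement; the measures still match because the boundary sphere is null. The statement ``$f^\sharp$ is the symmetric nonincreasing rearrangement of $v\mapsto f(x',v)$'' should be interpreted with the fixed variable being $x'\in\R^k$ (the paper's notation $f^\sharp(x,v)=f_x^*(v)$ has a typographical slip between $\R^k$ and $\R^{n-k}$, but the intended meaning is unambiguous).

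I do not anticipate a serious obstacle: the real work was done in Proposition \ref{samegeox}, and what remains is bookkeeping plus the elementary fact that a function whose superlevel sets are concentric balls equals its rearrangement. The only mild subtlety is making sure the single linear map $L$ and the single ellipsoid $\mathcal{E}$ from Proposition \ref{samegeox} are genuinely independent of $x'$ — which is precisely the content of that proposition — so that $L$ in the statement of Proposition \ref{ksymradial} can be chosen once and for all, with only $\gamma(x')$ (affine) and the scalar $\alpha(x',s)$ depending on $x'$.
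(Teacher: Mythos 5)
Your proposal is correct and follows essentially the same route as the paper: invoke Proposition \ref{samegeox}, take $L$ to be the invertible linear map carrying the unit ball onto $\mathcal{E}$, and observe that the superlevel sets of $v\mapsto f(x',L(v)+\gamma(x'))$ are then concentric balls (or empty), so this function agrees with the rearrangement $f^\sharp(x',\cdot)$ by the layer-cake identity. The only difference is that you spell out the last elementary step, which the paper leaves implicit.
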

\begin{proof} Let $f\in L^p(\R^n)$ be any nonnegative extremizer of \eqref{main} whose superlevel sets are open and convex. By Proposition \ref{samegeox}, there exist an ellipsoid centered at the origin $\mathcal{E}\subset\R^{n-k}$, an affine function $\gamma(x')$, and numbers $\alpha(x',s)\in[0,\infty)$  such that for every $(x',s)\in \R^k\times\R_+$ satisfying $|E(x',s)|>0$
\begin{equation*}
E(x',s)=\gamma(x')+\alpha(x',s)\mathcal{E}.  
\end{equation*}
Let $ L:\R^{n-k}\to\R^{n-k}$ be the linear map taking  the unit ball to $\mathcal{E}$.  Thus for each $x'\in\R^k$, each superlevel set of the function $v\to f(x',L(v)+\gamma(x'))$ is a ball centered at the origin or the empty set. 
\end{proof} 

To prove Proposition \ref{radial}, we follow the proof in \cite{C11} for the Radon transform with modifications to allow for the change in dimension. 
This proof requires some notation from group theory. Let $\mathfrak{A}(n)$ denote the affine group and $O(n)$ denote the orthogonal group, each in $\R^n$. Similarly, let $O(n-k)$ denote the orthogonal group in $\R^{n-k}$. 

\begin{defn} Fix  $k\in[1,n-1]$. For  $\varphi\in O(n)$  define a scaled skew reflection associated to $\varphi$ to be any element of $\mathfrak{A}(n)$ with the form
\begin{equation*}
	\Phi_{\varphi}=\varphi^{-1}\psi^{-1}\mathcal{L}^{-1}R\mathcal{L}\psi\varphi
\end{equation*} 
 where $\psi(x',v)=(x', v+\gamma(x'))$ for $\gamma(x'):\R^k\to\R^{n-k}$ an affine mapping, $\mathcal{L}(x',v)=(x', L(v))$ for $L:\R^{n-k}\to\R^{n-k}$ an invertible linear map, and $R(x',v_1,\ldots,v_{n-k})= (x',v_1,\ldots,v_{n-k-1}, -v_{n-k}).$ \end{defn} 

\begin{lem}\label{ssr} For every nonnegative extremizer $f\in L^p(\R^n)$ of \eqref{main}, for each $\varphi\in O(n)$ there exists a scaled skew reflection associated to $\varphi$, $\Phi_{\varphi}$, such that$f\circ \Phi_{\varphi}= f$ almost everywhere. \end{lem}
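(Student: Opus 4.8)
The plan is to build the scaled skew reflection $\Phi_\varphi$ by conjugating the obvious reflection that preserves the rearrangement structure provided by Proposition \ref{ksymradial}, and to use the fact that both the $L^p$ norm and $\|T_{k,n}\cdot\|_{\lqm}$ are unaffected by the maps $\varphi\in O(n)$, $\psi$, $\mathcal L$ and $R$ when applied in the appropriate form. First I would fix a nonnegative extremizer $f$; by Proposition \ref{nicef} we may assume its superlevel sets are open and convex, so Proposition \ref{ksymradial} applies and gives an affine $\gamma$ and an invertible linear $L$ with $g(x',v):=f(x',L(v)+\gamma(x'))=f^\sharp(x',v)$, i.e.\ $g=\mathcal L^{-1}$-then-$\psi^{-1}$-transport of $f$ is, for each fixed $x'$, radial nonincreasing in $v$. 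Concretely, $g = f\circ \psi\circ\mathcal L$ up to the Jacobian normalizations built into the definition of $\mathcal J$; the key structural point is that $g$ is invariant under every rotation of the $v$-variable fixing $x'$, in particular under $R$. Hence $g\circ R = g$ pointwise a.e., which rearranges to $f\circ(\mathcal L\circ\psi\circ R\circ\psi^{-1}\circ\mathcal L^{-1})=f$ a.e.

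Next I would incorporate the arbitrary $\varphi\in O(n)$. Since $\varphi$ is an invertible affine map it is a symmetry of \eqref{main} (by \cite{D11}), so $f\circ\varphi^{-1}$ — more precisely $\mathcal J_{\varphi^{-1}}f$ — is again a nonnegative extremizer. Applying Proposition \ref{nicef} and then Proposition \ref{ksymradial} to this new extremizer produces its own affine $\gamma$ and linear $L$, hence its own $\psi$ and $\mathcal L$; feeding these into the pointwise identity of the previous paragraph gives
\begin{equation*}
(f\circ\varphi^{-1})\circ(\mathcal L\psi R\psi^{-1}\mathcal L^{-1}) = f\circ\varphi^{-1} \quad\text{a.e.}
\end{equation*}
Composing on the right with $\varphi$ and rearranging yields $f\circ(\varphi^{-1}\mathcal L\psi R\psi^{-1}\mathcal L^{-1}\varphi)=f$ a.e. Comparing with the definition of a scaled skew reflection, one sees that $\Phi_\varphi:=\varphi^{-1}\psi^{-1}\mathcal L^{-1}R\mathcal L\psi\varphi$ has exactly this form after relabeling ($\psi\leftrightarrow\psi^{-1}$, $\mathcal L\leftrightarrow\mathcal L^{-1}$ are themselves of the permitted types, since the inverse of an affine shift $v\mapsto v+\gamma(x')$ is $v\mapsto v-\gamma(x')$, still affine in $x'$, and the inverse of an invertible linear map is invertible linear). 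So $\Phi_\varphi$ is a scaled skew reflection associated to $\varphi$ and $f\circ\Phi_\varphi=f$ a.e., as desired.

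The step I expect to require the most care is the bookkeeping that turns the rearrangement statement ``$g$ is radial nonincreasing in $v$'' into the clean pointwise identity $g\circ R=g$, and in particular checking that the Jacobian weights $|J_\varphi|^{1/p}$, $|J_{\mathcal L}|^{1/p}$, $|J_\psi|^{1/p}=1$ hidden in the definition of $\mathcal J$ cancel correctly so that the composed map really is $f\circ\Phi_\varphi$ on the nose and not merely $c\,f\circ\Phi_\varphi$ for some constant $c$. Here one uses that $R$, being an orthogonal map of $\R^{n-k}$ acting trivially on $x'$, has Jacobian $1$, that $\mathcal L$ and $\mathcal L^{-1}$ contribute reciprocal constant Jacobians which cancel in the conjugation $\mathcal L^{-1}R\mathcal L$, that $\psi$ and $\psi^{-1}$ likewise have Jacobian $1$ (shear in $v$), and that $\varphi\in O(n)$ has Jacobian $\pm1$; so $\Phi_\varphi$ is volume-preserving and the identity $\mathcal J_{\Phi_\varphi}f=f$ a.e.\ reduces to $f\circ\Phi_\varphi=f$ a.e. A minor secondary point is to confirm that the ``up to a null set'' equalities of superlevel sets from Proposition \ref{samegeox}, once upgraded to genuine equalities via the open-convex representative of Proposition \ref{nicef}, make the rearrangement identity hold for a genuine a.e.-defined function rather than only in an $L^p$ sense; this is exactly why Proposition \ref{nicef} was proved first.
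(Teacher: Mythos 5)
Your proposal is correct and follows essentially the same route as the paper: compose the extremizer with the orthogonal map (still an extremizer, with a nice representative by Proposition \ref{nicef}), apply Proposition \ref{ksymradial} to get $\gamma$ and $L$, use that the rearranged function is radial in $v$ and hence invariant under $R$, and conjugate back by $\varphi$; the Jacobian and relabeling bookkeeping you flag is harmless since all the maps involved are measure-preserving up to constants and the inverses/reorderings of $\psi$ and $\mathcal{L}$ remain of the permitted types. If anything, your choice to apply the rearrangement proposition to $f\circ\varphi^{-1}$ matches the stated form $\varphi^{-1}\psi^{-1}\mathcal{L}^{-1}R\mathcal{L}\psi\varphi$ slightly more carefully than the paper's own three-line proof.
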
 

\begin{proof}   Given a nonnegative extremizer $f\in L^p(\R^n)$ of \eqref{main} and an orthogonal transformation $\varphi\in O(n)$, take $\gamma(x'):\R^k\to\R^{n-k}$ and $ L:\R^{n-k}\to\R^{n-k}$ to be the affine function and invertible linear map guaranteed by Proposition \ref{ksymradial} applied to the extremizer that agrees almost everywhere whose level sets are open and convex with $f\circ\varphi$.  Set $\mathcal{L}=(x',L^{-1}(v))$ and $\psi(x',v)=(x',v-\gamma(x'))$. Then by Proposition \ref{ksymradial},  $f\circ \Phi_{\varphi}= f$ almost everywhere.
\end{proof}

\begin{prop} \label{keystep}  Let $f:\R^n\to[0,\infty)$ be a  measurable function such that each superlevel  set  is convex and bounded.  Suppose $\{x:f(x)>0\}$ has positive Lebesgue measure and  for each $\varphi\in O(n)$ there exists a scaled skew reflection associated to $\varphi$, $\Phi_{\varphi}$, such that $f\circ \Phi_{\varphi}= f$ almost everywhere, then there exists $\phi\in\mathfrak{A}(n)$ such that $f\circ\phi=(f\circ\phi)^*$ almost everywhere. 
\end{prop}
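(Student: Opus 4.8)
The plan is to produce a single $\phi\in\mathfrak{A}(n)$ after which every superlevel set of $f$ becomes a ball centred at the origin. The mechanism is the affine equivariance of the John ellipsoid: to a bounded convex body $K$ with nonempty interior assign $J(K)$, the ellipsoid of maximal volume contained in $K$; it is unique and satisfies $J(AK)=A\,J(K)$ for every affine bijection $A$. Write $E_s=\{x:f(x)>s\}$. Since $\{f>0\}$ has positive measure, some $E_{s_0}$ with $s_0>0$ has positive measure; being convex and bounded, it is then a convex body, and we may replace it (and every $E_s$) by its canonical convex representative, say the interior of its closure, which differs from $E_s$ by a null set and on which affine maps act genuinely. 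Let $\phi\in\mathfrak{A}(n)$ carry the closed unit ball $B$ centred at the origin onto $J(E_{s_0})$, and set $g=f\circ\phi$. Then $g$ has convex bounded superlevel sets, $\{g>0\}$ has positive measure, and the $s_0$-superlevel set of $g$ is $\phi^{-1}(E_{s_0})$, whose John ellipsoid is $\phi^{-1}(J(E_{s_0}))=B$.

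Now transport the hypothesis. For $\varphi\in O(n)$ put $\widetilde\Phi_\varphi=\phi^{-1}\Phi_\varphi\phi$; then $g\circ\widetilde\Phi_\varphi=f\circ\Phi_\varphi\circ\phi=g$ a.e., so $\widetilde\Phi_\varphi$ carries each superlevel set of $g$ to itself up to a null set, hence carries each canonical convex representative to itself genuinely; in particular it fixes $\phi^{-1}(E_{s_0})$. By equivariance of the John ellipsoid, $\widetilde\Phi_\varphi$ maps $B$ to $B$; an affine bijection preserving $B$ fixes its centre $0$ and has orthogonal linear part, so $\widetilde\Phi_\varphi\in O(n)$. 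Moreover $\widetilde\Phi_\varphi$ is an involution whose linear part is conjugate to that of $R$, hence has a one-dimensional $(-1)$-eigenspace; an orthogonal involution fixing a hyperplane through the origin is the orthogonal reflection across it. Thus each $\widetilde\Phi_\varphi$ is an orthogonal reflection across a hyperplane $H_\varphi\ni 0$.

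The heart of the matter is to show that $\{H_\varphi:\varphi\in O(n)\}$ comprises all hyperplanes through the origin; since the orthogonal reflections across all such hyperplanes generate $O(n)$, this forces every superlevel set of $g$ to be, up to a null set, an $O(n)$-invariant bounded convex set, i.e.\ a ball centred at $0$. To identify $H_\varphi$, note that the $(-1)$-eigendirection of a scaled skew reflection $\varphi^{-1}\psi^{-1}\mathcal{L}^{-1}R\mathcal{L}\psi\varphi$ is $\varphi^{-1}$ applied to a line lying in the ``vertical'' subspace $V=\{0\}\times\R^{n-k}\subset\R^k\times\R^{n-k}$ (because the linear parts of $\psi$ and $\mathcal{L}$ preserve $V$ and $R$ reflects within it); hence $H_\varphi$ is the orthogonal complement of $\phi^{-1}\varphi^{-1}$ of such a line. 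As $\varphi$ ranges over $O(n)$ the subspaces $\varphi^{-1}(V)$ exhaust all $(n-k)$-planes through the origin, and --- using the $O(n-k)$ worth of freedom in the linear part $\mathcal{L}$ of an admissible skew reflection, so that the reflected line inside $V$ may be taken arbitrary (in the application this freedom is precisely the rotational symmetry of $v\mapsto f^\sharp(x',v)$ furnished by Proposition \ref{ksymradial}) --- the lines $\varphi^{-1}(\ell)$, $\ell\subset V$, already sweep out every line through the origin. Applying the linear isomorphism $\phi^{-1}$ preserves this, so $\{H_\varphi\}$ is all hyperplanes through $0$, and the claim follows.

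Finally, since the superlevel sets of $g$ are nested, the radii of these balls are nonincreasing in $s$, so $g$ agrees almost everywhere with a symmetric nonincreasing function; as $g^{*}$ is determined by precisely these superlevel sets, $g=g^{*}$ a.e., that is $f\circ\phi=(f\circ\phi)^{*}$ a.e. The main obstacle is the third paragraph: converting the bare existence of one skew reflection per $\varphi$ into enough orthogonal reflections to generate (a dense subgroup of) $O(n)$. The John-ellipsoid equivariance performs the orthogonalisation cleanly; the delicate point is pinning down the reflection axes --- in particular recovering the $O(n-k)$-fold flexibility in the choice of $\mathcal{L}$ so that the axes run over every direction of $\R^n$ --- and this is where the precise normal form of a scaled skew reflection, together with the structure of $f^\sharp$ from Proposition \ref{ksymradial}, has to be used.
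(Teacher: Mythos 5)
Your argument is correct in substance but takes a genuinely different route at the key structural step. The paper forms the group $G\le\mathfrak{A}(n)$ of affine maps preserving every superlevel set up to null sets, observes that $G$ is compact, and invokes the theorem that a compact subgroup of $\mathfrak{A}(n)$ is conjugate into $O(n)$ to obtain $\phi$; you instead conjugate by the affine map carrying the unit ball to the John ellipsoid of a single positive-measure superlevel set and use uniqueness and affine equivariance of the John ellipsoid to force $\phi^{-1}G\phi\subset O(n)$. Your route is more elementary and constructive and produces an explicit $\phi$; the paper's buys brevity at the cost of a citation to Lie-group structure theory. The subsequent identification of each $\widetilde{\Phi}_\varphi$ as a hyperplane reflection and the sweep of the reflecting hyperplanes over $\mathcal{G}_{(n-1),n}$ is common to both proofs. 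On the delicate point you flag --- that the $(-1)$-eigenline of $\Phi_\varphi$ is only known to lie inside the $(n-k)$-plane $\varphi^{-1}(V)$, so that for $k<n-1$ the single skew reflection per $\varphi$ granted by the hypothesis does not obviously yield every reflection axis --- you are in fact more careful than the paper, which silently treats the fixed hyperplane of $\psi^{-1}\mathcal{L}^{-1}R\mathcal{L}\psi$ as if it were $\R^{n-1}$ itself. Your repair, replacing $R$ by an arbitrary reflection of $\R^{n-k}$ via the full $O(n-k)$-invariance of $f^\sharp(x',\cdot)$ supplied by Proposition \ref{ksymradial}, does close this; note, however, that it uses information not contained in the literal hypothesis of Proposition \ref{keystep}, which provides only one scaled skew reflection per $\varphi$. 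This is harmless in context, since the sole application of the proposition, through Lemma \ref{ssr}, furnishes exactly that larger family of symmetries, but strictly speaking you are proving the statement under a mildly strengthened hypothesis, and it would be cleaner either to build the $O(n-k)$ freedom into the definition of a scaled skew reflection or to record it as an added assumption.
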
  
\begin{proof} 
For each $s\in\R_+$ set $E_s=\{x:f(x)>s\}$.  Let $G\subset\mathfrak{A}(n)$ be the subgroup of all $g\in\mathfrak{A}(n)$ such that $g(E_s)=E_s$ up to a null set for each $s\in\R_+$. As for some $s\in\R_+$ the set $E_s$ has positive measure and for each $s\in\R_+$ the set $E_s$ is bounded, $G$ is compact.  For each $\varphi\in O(n)$ there exists a scaled skew reflection associated to $\varphi$, $\Phi_{\varphi}$, such that$f\circ \Phi_{\varphi}= f$ and hence $\Phi_{\varphi}\in G$.  Any compact subgroup of $\mathfrak{A}(n)$ is conjugate by an element of $\mathfrak{A}(n)$ to a subgroup of $O(n)$ (see \cite{H78} pg 256). Thus, there exists $\phi\in\mathfrak{A}(n)$ such that for all $\varphi\in O(n)$, $\phi^{-1}\Phi_{\varphi}\phi\in O(n)$. Set $\tilde{\Phi}_{\varphi}=\phi^{-1}\Phi_{\varphi}\phi$.  \\
\indent Express $\R^n$ as $\R^{n-1}\times\R$. The transformation $\psi^{-1}\mathcal{L}^{-1}R\mathcal{L}\psi$ acts as the identity on $\R^{n-1}$, so $\tilde{\Phi}_{\varphi}$ acts as the identity on $\phi^{-1}\varphi^{-1}(R^{n-1})$. For a scaled skew reflection ${\Phi}_{\varphi}$, $\tilde{\Phi}_{\varphi}$ is an orthogonal reflection. Thus $\tilde{\Phi}_{\varphi}$ must be reflection about the hyperplane parallel to $\phi^{-1}\varphi^{-1}(R^{n-1})$ passing through origin.  As $\varphi$ ranges over $O(n)$,  the hyperplane parallel to $\phi^{-1}\varphi^{-1}(R^{n-1})$ passing through origin ranges over $\mathcal{G}_{(n-1),n}$. Thus the conjugated subgroup $\phi^{-1}G\phi$ contains a reflection about each $(n-1)$-dimensional subspace of $\R^n$.   These transformations generate the orthogonal group, so for each $s\in\R_+$, $\phi(E_s)$ is a convex set fixed under every orthogonal transformation. Therefore,  for each $s\in\R_+$, $\phi(E_s)$ must be a ball.\\
\end{proof} 

\begin{proof} [Proof of Proposition \ref{radial}]
For every nonnegative extremizer $f\in L^p(\R^n)$ of \eqref{main}, each superlevel set $E_s$ of $f$  is convex. As $f\in L^p(\R^n)$, each $E_s$ has finite measure. As a convex set with positive finite measure is bounded, for every $s\in\R_+$,  $E_s$ is bounded. Given this and Lemma \ref{ssr}, $\tilde{f}$ satisfies the conditions of Proposition \ref{keystep}. Hence $\tilde{f}=F\circ\phi$ for some radial function $F$ and $\phi$ some affine transformation of $\R^n$. As $\tilde{f}$ and $f$ are equal in $L^p$, this suffices. 
\end{proof}
\end{subsection}

\end{section} 

 \begin{section}{$k$-plane transform in elliptic space}
At the heart of this section is a correspondence between the $k$-plane transform in Euclidean space and the $(k+1)$-plane transform in elliptic space when $q=n+1$. This correspondence was originally observed by Drury  \cite{D89} for the $l$-to-$k$ plane transform and its elliptic analog.  Valdimarsson \cite{V12} uses a similar correspondence to extend his results on extremizers in $L^p(\R^n)$ for a multilinear form similar to the form which appears in Drury's identity to extremizers in $L^p(\Sp^{n+1}\cap\{ x_{n+1}>0\})$ for a corresponding version of the multilinear form.   \\ 
\indent Recall that the $(k+1)$-plane transform in elliptic space, defined in the introduction, is a bounded operator from $L^{\frac{n+1}{k+1}}(\mathcal{G}_{1,n+1})$ to $L^{n+1}(\mathcal{G}_{k+1,n+1})$.  Define a map from $\R^n$ to $\mathcal{G}_{1,n+1}$ by embedding $\R^n$ in $\R^{n+1}$ as $\{x_{n+1}=1\}$ and associating to each point $(x,1)$ the line it spans.  Parameterize $\mathcal{G}_{1,n+1}$ by $\theta\in\Sp^{n+1}\bigcap\{x_{n+1}>0\}$, losing a null set,  by associating unit vectors in the northern hemisphere with the lines they span. In these coordinates, the map described is a nonlinear projection onto the northern hemisphere:
$$\mathcal{S}(x)=\frac{1}{(1+|x|^2)^{1/2}}(x_1,\ldots,x_n,1).$$ 
 Let $d\sigma$ denote surface measure on the northern hemisphere and set $c_n=\int_{\Sp^{n+1}}\one_{\{\theta_{n+1}>0\}}(\theta)d\sigma.$ For this parametrization of $\mathcal{G}_{1,n+1}$ probability Haar measure is $c_n^{-1}\one_{\{\theta_{n+1}>0\}}(\theta)d\sigma$. \\
 \indent To a function $f\in L^\frac{n+1}{k+1}(\R^n)$, associate the function $F\in L^\frac{n+1}{k+1}(\mathcal{G}_{1,n+1})$ defined by

$$ F(\theta) =(\theta_{n+1})^{-(k+1)} f(\mathcal{S}^{-1}(\theta)).$$
 
Observe that $ c_n^{1/p}\|F\|_{L^p(\mathcal{G}_{1,n+1})} =
\|f\|_{L^p(\R^n)}$ when $p=\frac{n+1}{k+1}$.

\begin{lem} \label{STeq}There exists $C\in\R_+$ depending only on $n$ and $k$ 
 such that for every $f\in L^\frac{n+1}{k+1}(\R^n)$ and its associated function $F\in L^\frac{n+1}{k+1}(\mathcal{G}_{1,n+1})$ 
\begin{equation} 
|| T^E_{k+1,n+1}F(\theta) ||_{ L^{n+1}( \mathcal{G}_{k+1,n+1})}= C||T_{k,n}f||_{\lqnm}. 
\end{equation} 
\end{lem}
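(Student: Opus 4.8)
The plan is to prove Lemma \ref{STeq} by a direct change-of-variables computation, expressing both sides via Drury-type multilinear identities and matching them term by term. The key observation is that Drury's identity (the first Lemma of \S 2) has an exact analogue for the $(k+1)$-plane transform in elliptic space: both $L^{n+1}$ norms are given by an integral over $(n+2)$-tuples of points (in $\mathbb{R}^n$, respectively in $\mathbb{S}^{n+1}$) of the product of the function values weighted by a negative power of the volume of the simplex they span, with the caveat that in the Euclidean case one of those $n+1$ points is replaced by an integral of $f$ over the $k$-plane determined by the first $k+1$ points (since $T_{k,n}$ lives on affine $k$-planes, not just $k$-planes through the origin). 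For the elliptic transform, all relevant ``planes'' pass through the origin, so the analogous formula is cleaner: a symmetric multilinear form in $n+2$ unit vectors $\theta_0,\dots,\theta_{n+1}$ weighted by $\mathrm{vol}(\theta_0,\dots,\theta_{k+1})^{-(n+1)}$ or a similar homogeneous factor.

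First I would record the elliptic Drury identity: for nonnegative $F\in L^{(n+1)/(k+1)}(\mathcal{G}_{1,n+1})$,
\begin{equation*}
\|T^E_{k+1,n+1}F\|_{L^{n+1}(\mathcal{G}_{k+1,n+1})}^{n+1} = C'\int \prod_{i=0}^{n+1} F(\theta_i)\, W(\theta_0,\dots,\theta_{n+1})\, d\sigma(\theta_0)\cdots d\sigma(\theta_{n+1})
\end{equation*}
for an explicit geometric weight $W$ that is a negative power of the $(k+1)$-dimensional volume of the simplex spanned by $\theta_0,\dots,\theta_{k+1}$ on the sphere, times a Jacobian factor coming from parametrizing $(k+1)$-planes through the origin by $k+2$ points on them. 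This is the elliptic-space counterpart of the first Lemma and can be cited from Drury \cite{D89} or rederived by the same double-counting argument. Next I would substitute $F(\theta) = (\theta_{n+1})^{-(k+1)} f(\mathcal{S}^{-1}(\theta))$ and push the integral forward under $\mathcal{S}$, using $\theta = \mathcal{S}(x)$, so $\theta_{n+1} = (1+|x|^2)^{-1/2}$ and $d\sigma(\theta) = (1+|x|^2)^{-(n+1)/2}\,dx$ (the Jacobian of $\mathcal{S}$). Each factor $F(\theta_i)\,d\sigma(\theta_i)$ becomes $f(x_i)(1+|x_i|^2)^{(k+1)/2}(1+|x_i|^2)^{-(n+1)/2}\,dx_i = f(x_i)(1+|x_i|^2)^{(k-n)/2}\,dx_i$. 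The crux is then to verify that the geometric weight $W(\mathcal{S}(x_0),\dots,\mathcal{S}(x_{n+1}))$, combined with these conformal factors $(1+|x_i|^2)^{(k-n)/2}$, reduces exactly to the Euclidean weight $\mathrm{vol}^{(k-n)}(x_0,\dots,x_k)$ times the correct surface-measure factor on $\pi(x_0,\dots,x_k)$ — i.e.\ that the stereographic-type map $\mathcal{S}$ is conformal in precisely the way needed to make the two Drury identities coincide. Concretely, one needs the identity relating the $(k+1)$-volume of the spherical simplex $\{\mathcal{S}(x_0),\dots,\mathcal{S}(x_{k+1})\}$ to the Euclidean $k$-volume of $\{x_0,\dots,x_{k+1}\}$ weighted by the $\prod(1+|x_i|^2)$ factors; this is a classical fact about Cayley–Menger determinants under inversion/projection and is exactly the computation underlying Drury's correspondence in \cite{D89}.

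The main obstacle I expect is bookkeeping the geometric weight: making the change of variables $\theta_i = \mathcal{S}(x_i)$ precise, correctly identifying the Jacobian of the parametrization of $(k+1)$-planes through the origin by $k+2$ points, and then proving the volume-ratio identity that turns the spherical simplex volume into the affine Euclidean simplex volume with the right power and the right conformal prefactors. Once that identity is in hand, the two multilinear integrals agree up to a constant $C$ depending only on $n,k$ (absorbing $C'$, the various parametrization Jacobians, and $c_n$), the nonnegative case follows, and the general case follows since both sides only see $|f|$. I would also note that the exponent arithmetic $p=(n+1)/(k+1)$ is exactly what forces the conformal factors $(1+|x_i|^2)^{(k-n)/2}$ to appear with the power that matches the Euclidean Drury weight, which is why the correspondence is special to the endpoint $q=n+1$.
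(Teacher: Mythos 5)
Your plan hinges on two ingredients that you never actually supply: (i) a precise ``elliptic Drury identity'' with an explicit weight $W$ for $\|T^E_{k+1,n+1}F\|_{L^{n+1}}^{n+1}$, and (ii) the conformal volume identity showing that $W(\mathcal{S}(x_0),\ldots,\mathcal{S}(x_{n+1}))$ together with the factors $\prod_i(1+|x_i|^2)^{(k-n)/2}$ reproduces the Euclidean Drury weight $\voln^{(k-n)}(x_0,\ldots,x_k)$ and the surface measure $d\sigma$ on $\pi(x_0,\ldots,x_k)$. You flag (ii) yourself as ``the main obstacle,'' and (i) is only asserted to be citable or re-derivable (``or a similar homogeneous factor''). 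But these two steps are exactly where the content of the lemma lives; as written the argument is a programme, not a proof. Note also that the matching is not a clean term-by-term correspondence between two products of $n+2$ point evaluations: on the Euclidean side $n-k$ of the factors in Drury's identity are integrals of $f$ over the plane $\pi(x_0,\ldots,x_k)$ against surface measure, so you would additionally have to transport those inner integrals under $\mathcal{S}$, which adds a further Jacobian computation your sketch does not address.

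The paper's proof avoids the multilinear machinery entirely and is worth comparing against. It works at the level of the operators: each $(k+1)$-plane $\pi$ through the origin in $\R^{n+1}$ corresponds to the affine $k$-plane $\Pi=\pi\cap\{x_{n+1}=1\}$, and a rotation-invariance argument (reducing to $\pi$ through the north pole) gives the measure identity $\theta_{n+1}^{-(k+1)}d\gamma_\pi(\theta)=c_n\,b(\Pi)\,d\lambda_\Pi(x)$, where $b(\Pi)$ is the distance from $\Pi$ to the origin. Hence $T^E_{k+1,n+1}F(\pi)=c_n\,b(\Pi)\,T_{k,n}f(\Pi)$ pointwise. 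Combining this with Drury's relation $d\gamma(\pi)=c\,b(\Pi)^{-(n+1)}d\mu(\Pi)$ between Haar measure on $\mathcal{G}_{k+1,n+1}$ and the product measure on $\M$, the factors $b(\Pi)^{n+1}$ and $b(\Pi)^{-(n+1)}$ cancel exactly because $q=n+1$, which is the same endpoint phenomenon you correctly identify but obtained without any simplex-volume or Cayley--Menger computation. If you want to keep your route, you must actually state and prove the elliptic multilinear identity and the volume-ratio identity; doing so is possible but amounts to re-deriving the measure relations above in a heavier form, so I would recommend the direct operator-level computation instead.
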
 
\begin{proof} The nonlinear projection above also gives us a map from $\mathcal{G}_{k+1,n+1}$ to $\M$. For any $\pi\in\mathcal{G}_{k+1,n+1}$, let $\Pi\in\mathcal{M}_{k,n}$ be $\pi\cap\{x_{n+1}=1\}$ thought of as a $k$-plane in $\R^n$. Note that each line $\theta\in\pi$ corresponds to a point $\mathcal{S}^{-1}(\theta)\in\Pi$. Let $b(\Pi)$ denote the distance from $\Pi$ to the origin in $\R^{n+1}$. In \cite{D89}, Drury showed that there exists $c\in\R_+$ depending only on $k$ and $n$ such that Haar measure on $\mathcal{G}_{1,n+1}$, denoted $d\gamma$, is related to the natural product measure, denoted $d\mu$, on $\M$ by  
$$d\gamma(\pi)=c(b(\Pi))^{-(n+1)}d\mu(\Pi). $$  
The next step is to relate Haar measure on the set of linear subspaces contained in $\pi$, denoted $d\gamma_\pi$, to the natural product measure on the set of lines contained in $\Pi$, denoted $d\lambda_\Pi$. As each of the measures in question is invariant under rotation\footnote{To rotate the northern hemisphere, rotate the sphere and send any points of the northern hemisphere mapped into the southern hemisphere to their antipodal points.}, it is enough to consider  $\pi$ passing through the north pole of $\Sp^{n+1}$ and $\Pi$ passing through $(0,\ldots,0,b(\Pi))$.  In this case our map corresponds to division by $b(\Pi)$ followed by our original projection. Thus, 
$$\theta_{n+1}^{-(k+1)}d\gamma_{\pi}(\theta)=c_nb(\Pi)d\lambda_{\Pi}(x).$$
Therefore, 
\begin{align*}
T^E_{k+1,n+1}\left((\theta_{n+1})^{-(k+1)} f(\mathcal{S}^{-1}(\theta))\right)(\pi) &= \int_{\theta\subset\pi} (\theta_{n+1})^{-(k+1)} f(\mathcal{S}^{-1}(\theta)) d\gamma_\pi(\theta)\\
 &=c_n\int_{x\in\Pi} f(x) (b(\Pi))\;d\lambda_{\Pi}(x).
\end{align*}
Now, 
\begin{align*} 
|| T^E_{k+1,n+1}F ||_{ L^{n+1}( \mathcal{G}_{k+1,n+1})}^{n+1} &=\int_{\mathcal{G}_{1,n+1}}\left[T^E_{k,n}\left((\theta_{n+1})^{-(k+1)} f(\mathcal{S}^{-1}(\theta))\right)\right]^{n+1}d\gamma(\pi)\\
&=C\int_{\mathcal{M}_{k,n}}\left[\int_{x\in\Pi} f(x) (b(\Pi))\;d\lambda_{\Pi}(x)\right]^{n+1}(b(\Pi))^{-(n+1)}d\mu(\Pi)\\
 &= C|| T_{k,n}f ||_{ L^{n+1}( \mathcal{M}_{k,n})}^{n+1} \qedhere 
\end{align*}

\end{proof}

\begin{proof}[Proof of theorem \ref{SphThm}] 

By Lemma \ref{STeq}, there exists $C\in\R_+$ depending only on $n$ and $k$ such that for any $f\in\lp$ with $p=\frac{n+1}{k+1}$, 
$$ \frac{\|T_{k,n}f\|_{L^{n+1}(\M)}}{\|f\|_{\lp}}=C\frac{\|T^E_{k,n}F\|_{L^{n+1}(\mathcal{G}_{k+1,n+1})}}{\|F\|_{L^p(\mathcal{G}_{1,n+1})}}.$$
It follows immediately that $f\in L^p(\R^n)$ is an extremizer of \eqref{main} if and only if $F$ is an extremizer of \eqref{mainSph}.   \\
\indent By Theorem \ref{mainthm} any extremizer of \eqref{main} has the form $f(x)=c(1+|\phi(x)|^2)^{-(k+1)/2}$ where $\phi$ is an affine endomorphism of $\R^n$. It remains to compute the associated $F$.  Observe for any such $\phi$ there exists $L$, an invertible transformation of $\R^{n+1}$, such that $ (1+|\phi(x)|^2)= |L(x,1)|^2$.    Therefore, 
\begin{align*} F(\theta) &=(\theta_{n+1})^{-(k+1)} f(\mathcal{S}^{-1}(\theta))\\ &=c(\theta_{n+1})^{-(k+1)}(|L(\mathcal{S}^{-1}(\theta),1)|^2)^{-(k+1)/2} \\ & =c|L(\theta_1,\ldots,\theta_{n+1})|^{-(k+1)}. \qedhere
\end{align*} 
\end{proof} 
This perspective gives insight into the additional symmetry $J$ used in Christ \cite{C11} and Drouot's \cite{D11} work. Define $\mathcal{S}^*:L^p(\R^n)\to L^p(\mathcal{G}_{1,n+1})$ by $\mathcal{S}^*(f)=F$. Denote by $\sgn$  the standard sign function.  Set  $Jf(s,y)=|s|^{-k-1}f(s^{-1},s^{-1}y)$ and $RF(\theta)= F(\sgn(\theta_1)\theta_{n+1},\sgn(\theta_1)\theta_2,\ldots,\sgn(\theta_1) \theta_n,|\theta_1|)$. 
\begin{lem} For every $f\in L^\frac{n+1}{k+1}(\R^n)$, 
$$ \mathcal{S}^*Jf(\theta)= R\mathcal{S}^*f(\theta). $$
\end{lem}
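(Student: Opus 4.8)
The plan is to verify this by direct computation, chasing both sides through the explicit formulas. Both $\mathcal{S}^*Jf$ and $R\mathcal{S}^*f$ are functions on $\mathcal{G}_{1,n+1}$, parametrized by unit vectors $\theta = (\theta_1,\ldots,\theta_{n+1})$ in the northern hemisphere, so it suffices to evaluate each side at an arbitrary such $\theta$ and check that the resulting scalars agree. Here I am using coordinates on $\R^n = \R \times \R^{n-1}$ with first coordinate $s$ and remaining coordinates $y$, matching the definition $Jf(s,y) = |s|^{-k-1} f(s^{-1}, s^{-1}y)$.

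First I would compute the left-hand side. By definition $\mathcal{S}^*(g)(\theta) = (\theta_{n+1})^{-(k+1)} g(\mathcal{S}^{-1}(\theta))$, where $\mathcal{S}^{-1}(\theta) = (\theta_1/\theta_{n+1}, \ldots, \theta_n/\theta_{n+1})$ is the inverse of the nonlinear projection (valid since $\theta_{n+1} > 0$). So $\mathcal{S}^*Jf(\theta) = (\theta_{n+1})^{-(k+1)} (Jf)(\theta_1/\theta_{n+1}, (\theta_2,\ldots,\theta_n)/\theta_{n+1})$. Substituting $s = \theta_1/\theta_{n+1}$ and $y = (\theta_2,\ldots,\theta_n)/\theta_{n+1}$ into the formula for $Jf$, the prefactor becomes $|s|^{-k-1} = |\theta_1/\theta_{n+1}|^{-(k+1)}$ and the argument of $f$ becomes $(s^{-1}, s^{-1}y) = (\theta_{n+1}/\theta_1, (\theta_2,\ldots,\theta_n)/\theta_1)$. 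Collecting the powers of $\theta_{n+1}$ and $|\theta_1|$ gives $\mathcal{S}^*Jf(\theta) = |\theta_1|^{-(k+1)} f(\theta_{n+1}/\theta_1, \theta_2/\theta_1, \ldots, \theta_n/\theta_1)$.

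Next I would compute the right-hand side. Write $\eta = (\sgn(\theta_1)\theta_{n+1}, \sgn(\theta_1)\theta_2, \ldots, \sgn(\theta_1)\theta_n, |\theta_1|)$, which is again a unit vector in the northern hemisphere since its last coordinate $|\theta_1|$ is nonnegative. Then $R\mathcal{S}^*f(\theta) = \mathcal{S}^*f(\eta) = (\eta_{n+1})^{-(k+1)} f(\mathcal{S}^{-1}(\eta))$. Here $\eta_{n+1} = |\theta_1|$, and $\mathcal{S}^{-1}(\eta) = (\eta_1/\eta_{n+1}, \ldots, \eta_n/\eta_{n+1}) = (\sgn(\theta_1)\theta_{n+1}/|\theta_1|, \sgn(\theta_1)\theta_2/|\theta_1|, \ldots, \sgn(\theta_1)\theta_n/|\theta_1|) = (\theta_{n+1}/\theta_1, \theta_2/\theta_1, \ldots, \theta_n/\theta_1)$, using $\sgn(\theta_1)/|\theta_1| = 1/\theta_1$. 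Thus $R\mathcal{S}^*f(\theta) = |\theta_1|^{-(k+1)} f(\theta_{n+1}/\theta_1, \theta_2/\theta_1, \ldots, \theta_n/\theta_1)$, which matches the left-hand side exactly.

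There is no real obstacle here; the only points requiring minor care are the bookkeeping of the signs (the identity $\sgn(\theta_1)/|\theta_1| = \theta_1^{-1}$ is what makes the $R$-twist on the sphere correspond to the inversion $s \mapsto s^{-1}$ on $\R^n$) and the observation that the map $\theta \mapsto \eta$ legitimately lands in the northern hemisphere so that $\mathcal{S}^*f$ may be evaluated there. One should also note the computation is valid for a.e.\ $\theta$, namely those with $\theta_1 \neq 0$ and $\theta_{n+1} > 0$, which is all that is needed since these functions are elements of $L^p$. Since both sides reduce to the same explicit expression, the lemma follows.
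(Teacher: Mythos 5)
Your computation is correct and follows essentially the same route as the paper: evaluate both $\mathcal{S}^*Jf$ and $R\mathcal{S}^*f$ explicitly via the formulas for $\mathcal{S}^{-1}$, $J$, and $R$, simplify using $\theta_{n+1}>0$ and $\sgn(\theta_1)/|\theta_1|=\theta_1^{-1}$, and observe both sides equal $|\theta_1|^{-(k+1)}f(\theta_{n+1}/\theta_1,\theta_2/\theta_1,\ldots,\theta_n/\theta_1)$. Your added remarks (that $\eta$ lies in the northern hemisphere and the identity holds for a.e.\ $\theta$ with $\theta_1\neq0$) are fine points the paper leaves implicit, but the argument is the same.
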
 
\begin{proof}
$$R\mathcal{S}^*f(\theta)=|\theta_1|^{-k-1}f\left(\frac{\sgn(\theta_1)\theta_{n+1}}{|\theta_{1}|},\frac{\sgn(\theta_1)\theta_2}{|\theta_{1}|},\ldots,\frac{\sgn(\theta_1)\theta_n}{|\theta_{1}|} \right).$$
Similarly, 
$$ \mathcal{S}^*Jf(\theta)= |\frac{\theta_1}{\theta_{n+1}}|^{-(k+1)}(\theta_{n+1})^{-(k+1)}f\left(\frac{\theta_{n+1}}{\theta_{1}},\frac{\theta_2}{\theta_{1}},\ldots,\frac{\theta_n}{\theta_{1}} \right).$$
As $\theta_{n+1}>0$, $ \mathcal{S}^*Jf(\theta)=R\mathcal{S}^*f(\theta)$ as claimed. 
\end{proof}

As the reflection $R$ is clearly a symmetry of \eqref{mainSph}, $J$ must be a symmetry of \eqref{main} by Lemma \ref{STeq}. 

\end{section}  

 \begin{section}{Another related family of operators} 
In this section we present yet another realization of the inequality \eqref{main}, this time for the operator $T_{k,n}^\sharp$ which was defined in the introduction. Recall that $T_{k,n}^\sharp$ takes functions on $\R^n$ to functions on $\R^{(k+1)(n-k)}$. 
\begin{lem} \label{TTsheq} Let $f\in L^p(\R^n)$ be a nonnegative continuous function. Then there exists $C\in\R_+$ depending only on $n$ and $k$ such that 
$$ \|T_{k,n}f\|_{\lqm}=C\|T_{k,n}^\sharp f\|_{L^{q}(\R^{(k+1)(n-k)})}.$$
\end{lem}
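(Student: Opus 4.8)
The plan is to compare the two operators by expressing each $L^q$ norm as an integral over a common parameter space, then to relate the two parametrizations of affine $k$-planes in $\R^n$ by an explicit change of variables. First I would recall that $\M$ is parametrized by $(\theta,y)$, and that the natural product measure $d\gamma(\theta)\,d\lambda_{\theta^\perp}(y)$ can, after discarding the null set of affine $k$-planes that project degenerately onto $\R^k\times\{0\}$ (i.e.\ those not of ``graph type''), be rewritten in the coordinates $(A,b)\in Mat(k,n-k)\times\R^{n-k}$ where the plane is $\{(x',A x'+b):x'\in\R^k\}$. The key computational fact is that there is a Jacobian factor $w(A)$, depending only on $A$, such that $d\mu_{\M} = w(A)\,dA\,db$; explicitly $w(A)$ is a power of $\det(I+A^\top A)$ coming from the two changes one needs — from the orthonormal $(\theta,y)$ description to the graph description, and from the relation between $d\lambda_\theta$ (the measure used in $T_{k,n}$) and Lebesgue measure $dx'$ on $\R^k$ (the measure used in $T_{k,n}^\sharp$). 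Writing out $T_{k,n}f(\theta,y)$ on a graph-type plane, the substitution $x = (x', Ax'+b)$ gives $d\lambda_\theta(x) = (\det(I+A^\top A))^{1/2}\,dx'$, so that $T_{k,n}f(\theta(A),y(A,b)) = (\det(I+A^\top A))^{1/2}\, T_{k,n}^\sharp f(A,b)$.

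Next I would assemble these pieces: raising to the $q=n+1$ power and integrating,
\[
\|T_{k,n}f\|_{\lqm}^{q}
= \int (\det(I+A^\top A))^{q/2}\,|T_{k,n}^\sharp f(A,b)|^{q}\, w(A)\,dA\,db .
\]
For this to reduce to a constant multiple of $\|T_{k,n}^\sharp f\|_{L^q(\R^{(k+1)(n-k)})}^q = \int |T_{k,n}^\sharp f(A,b)|^q\,dA\,db$, one needs $(\det(I+A^\top A))^{q/2} w(A)$ to be identically constant in $A$. This is exactly where the endpoint exponent $q=n+1$ and $p=(n+1)/(k+1)$ enters: the power of $\det(I+A^\top A)$ appearing in $w(A)$ from the Grassmannian/Lebesgue measure comparison must be precisely $-q/2$. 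I would verify this by a direct but careful computation of $w(A)$ — parametrize $\mathcal{G}_{k,n}$ near the graph chart, compute the Haar density in the $A$ coordinates, then multiply by the $(n-k)$-dimensional Lebesgue factor for $y$ versus $b$ and the $k$-dimensional factor for $d\lambda_\theta$ versus $dx'$, and confirm the exponents cancel. The continuity hypothesis on $f$ is used only to make the pointwise identity $T_{k,n}f(\theta(A),y(A,b)) = (\det(I+A^\top A))^{1/2} T_{k,n}^\sharp f(A,b)$ hold everywhere rather than merely a.e., and to freely apply Fubini; one could relax it afterward by density.

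The main obstacle I expect is the bookkeeping of the Jacobian $w(A)$ and checking that the exponents line up to give a genuine constant — this is the crux, since it is precisely the endpoint relation between $p$ and $q$ manifesting geometrically, and an off-by-a-power error here invalidates the whole lemma. A clean way to organize it is to first handle the affine parameter $b$ (which contributes a trivial Jacobian once $A$ is fixed, since translating the plane vertically is an isometry in the relevant sense), reducing everything to comparing, for each fixed direction $\theta$, the Haar measure element $d\gamma(\theta)$ with $dA$ together with the two Lebesgue rescalings; the rotation-invariance argument used in Lemma \ref{STeq} (and in \cite{D89}) for the elliptic case provides the template, as there the analogous density $b(\Pi)^{-(n+1)}$ plays exactly the role $\det(I+A^\top A)^{-(n+1)/2}$ plays here. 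Once the density identity is established, the rest is a one-line application of Fubini's theorem and the fact that every affine $k$-plane not of graph type forms a null set in $\M$.
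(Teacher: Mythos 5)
Your proposal is correct, but it follows a genuinely different route from the paper. The paper never compares the two parametrizations of $\M$ directly: instead it shows that $\|T_{k,n}^\sharp f\|_{L^q(\R^{(k+1)(n-k)})}^q$ equals the same Drury-type multilinear form that Lemma \ref{DL2} produces for $\|T_{k,n}f\|_{\lqm}^q$. This is done by writing $T_{k,n}^\sharp f(A,b)$ as a limit of mollified integrals, expanding the $(n+1)$-st power, changing variables $s_j=Ax_j'+b+t_j$, and computing the Jacobian of $(a_i,b_i)\mapsto(a_i\cdot x_j'+b_i)_{j=0}^k$, which is $\volk(x_0',\ldots,x_k')$; the resulting delta-function limit reproduces exactly the $\volk^{(k-n)}$ weight of Lemma \ref{DL2}. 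Your route instead works at the level of the parameter space itself: you pass from $(\theta,y)$ to graph coordinates $(A,b)$, and the lemma reduces to the density identity $d\gamma(\theta)\,d\lambda_{\theta^\perp}(y)=c\,\det(I+A^\top A)^{-n/2}\det(I+A^\top A)^{-1/2}\,dA\,db$ together with $d\lambda_\theta=\det(I+A^\top A)^{1/2}dx'$, so that the total power of $\det(I+A^\top A)$ is $\tfrac{q}{2}-\tfrac{1}{2}-\tfrac{n}{2}$, which vanishes precisely at $q=n+1$. That is exactly the $-q/2$ cancellation you predicted, so your plan goes through once the standard Grassmannian density formula is verified. The trade-off: your argument is more geometric, makes the endpoint exponent transparent, and barely uses continuity of $f$ (Tonelli for nonnegative $f$ suffices), whereas the paper's argument avoids any Haar-density computation on $\mathcal{G}_{k,n}$ and, more importantly, exhibits $T_{k,n}^\sharp$ as yielding the same multilinear form used throughout the rest of the paper, which is the form the symmetry $J$ of Section 4 acts on. One small caution: your parenthetical that the $b$ parameter ``contributes a trivial Jacobian'' is misleading as stated; the $y$-versus-$b$ comparison contributes the nontrivial factor $\det(I+A^\top A)^{-1/2}$ (constant in $b$ but not in $A$), and omitting it would break the exponent count -- though elsewhere you do list this factor among the contributions to $w(A)$, so the bookkeeping as a whole is consistent.
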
 
The proof is a generalization of that used in \cite{C11} in the case $k=n-1$. 
\begin{proof} 
By Lemma \ref{DL2}, it suffices to show that for any nonnegative continuous function $f$, 
\begin{multline}
	\|T_{k,n}^\sharp f\|_{L^q(\R^{(k+1)(n-k)})}^q = \\  
\int\!\! \volk^{(k-n)}(x'_0,\ldots, x'_k)\!\!\int\!\! \prod_{i=0}^kf(x'_i,v_i)\!\!\!\!  \prod_{i=k+1}^{n}\!\!\!\!
			f(x'_i,\sum_{j=0}^{k}\!b_{i,j}v_j)\;dv_0\ldots dv_k dx'_0\ldots dx'_{n}.
\end{multline} 
\indent Let $c_{n-k}$ be the volume of the unit sphere in $(n-k)$ dimensions. Observe that 
\begin{multline*} 
T_{k,n}^\sharp f(A,b)=\int_{\R^k}f(x',A(x')+b)dx'\\
=\lim_{\epsilon\to0} \left(c_{n-k}\epsilon^{n-k}\right)^{-1}\!\!\!\! \int_{\R^k}\!\!\int_{\R^{n-k}}f(x',A(x')+b+t)\one_{|t|<\epsilon}\;dtdx'. 
\end{multline*} 
Taking $dA$ to be Lebesgue measure on the entries of $A$, and $db$ to be Lebesgue measure on $\R^{n-k}$, 
\begin{multline*} 
\!\!\int
\left(T_{k,n}^\sharp f(A,b)\right)^{n+1}dAdb  =\\
\int
\!\!\prod_{j=0}^k\!\left(\lim_{\epsilon\to0} \left(c_{n-k}\epsilon^{n-k}\right)^{-1}\!\!\!\!\int
\!\!\!f(x'_{\!j},A(x_{\!j}')\!+\!b\!+\!t_j)\one_{|t_j|<\epsilon}\,dt_jdx'_j\!\right)\!\!\!\!
\prod_{j=k+1}^n\!\!\!\! \left(\int_{\R^k}\!\!\!\!f(x_j',A(x_j')+b)dx_j'\!\!\right)
\!dAdb.
\end{multline*}
Apply the change of variables $s_j=Ax_j+b+t_j$ for $j\in[0,k]$ and Tonelli's theorem to obtain
\begin{multline}\label{exDs}
 \int \left(T_{k,n}^\sharp f(A,b)\right)^{n+1}dAdb  =
\int\prod_{j=0}^k f(x'_{\!j},s_j) 
\int \prod_{j=k+1}^n f(x_j',A(x_j')+b)\\
 \prod_{j=0}^k
\left(\lim_{\epsilon\to0} \left(c_{n-k}\epsilon^{n-k}\right)^{-1}\one_{|s_j-Ax_j+b|<\epsilon}\right)
dAdb
 \prod_{j=0}^kds_jdx_j'\prod_{j=k+1}^ndx_j'.
\end{multline}
Consider the inner integral, now viewing $\one_{|s_j-Ax_j+b|<\epsilon}$ as a cutoff function in $A$ and $b$.  Let $a_i$ be the $i$-th row of $A$ and $b_i$ be the $i$-th entry of $b$.  Let $L$ be the linear map such that $L(a_i,b_i)=(a_i\cdot x_j+b_i)_{j=0}^k$.  Then $L$ has a Jacobian $\mathcal{J}_L$ given by 
$$\mathcal{J}_L = \left(\begin{array}{cccc} x'_{0,1}&\cdots& x'_{0,k}&1\\ \vdots&&\vdots&\vdots\\ x'_{k,1}&\cdots& x'_{k,k}&1\\  \end{array}\right)=\volk(x'_0,\ldots,x'_k).$$
Let  $A_0$, $b_0$ such that $A_0x'_j+b_0=s_j$.  As $f$ is assumed to be continuous,
\begin{multline*} 
\lim_{\epsilon\to0} \left(c_{n-k}\epsilon^{n-k}\right)^{-1}\int \prod_{j=k+1}^n f(x_j',A(x_j')+b)
 \prod_{j=0}^k\left(\one_{|s_j-Ax_j+b|<\epsilon}\right)dAdb=\\
\volk(x'_0,\ldots,x'_k)^{k-n}\delta_{(A,b)(A_0,b_0)} \prod_{j=k+1}^n f(x_j',A(x_j')+b).
\end{multline*}
Substituting this into \eqref{exDs} gives the result.
\end{proof}

\begin{proof}[Proof of theorem \ref{TTs}] 
Using Lemma \ref{TTsheq} and standard approximation arguments, it follows that for any nonnegative function $f\in L^p(\R^n)$,  $ \|T_{k,n}f\|_{\lqm}=C\|T_{k,n}^\sharp f\|_{L^{q}(\R^{(k+1)(n-k)})}.$ As  $\| T_{k,n}^\sharp f\|_{L^q(\R^{(k+1)(n+1)})}\leq \| T_{k,n}^\sharp |f|\|_{L^q(\R^{(k+1)(n+1)})}$, it follows directly from Lemma \ref{TTsheq} and Theorem \ref{mainthm} that $T^\sharp_{k,n}$ is a bounded operator from $\lp$ to $L^q(\R^{(k+1)(n+1)})$.  Moreover, as $\| T_{k,n} f\|_{L^q(\M)}\leq \| T_{k,n} |f|\|_{L^q(\M)}$ as well,
$$\sup_{\{g:\|g\|_{L^p(\R^n)}\neq0\}} \frac{ \|T_{k,n}g\|_{\lqm}}{\|g\|_{L^p(\R^n)}}=\sup_{\{g:\|g\|_{L^p(\R^n)}\neq0,g>0\}} \frac{\|T_{k,n}g\|_{\lqm}}{\|g\|_{L^p(\R^n)}}.$$ 
 By Lemma \ref{TTsheq} there exists $C\in\R_+$ depending only on $n$ and $k$ such that
 $$\sup_{\{g:\|g\|_{L^p(\R^n)}\neq0,g>0\}} \frac{ \|T_{k,n}g\|_{\lqm}}{\|g\|_{L^p(\R^n)}}=\sup_{\{g:\|g\|_{L^p(\R^n)}\neq0,g>0\}} \frac{C\|T_{k,n}^\sharp g\|_{\lqm}}{\|g\|_{L^p(\R^n)}}.$$ 
Therefore, a nonnegative function $f\in\lp$ is an extremizer of \eqref{main} if and only if it is a nonnegative extremizer of \eqref{mains}. As any extremizer has the form $f=c|f|$ for some complex number $c$, this suffices. 
\end{proof} 

Again, the pseudo-conformal symmetry $J$ used to execute the method of competing symmetries in \cite{D11} is a natural symmetry of \eqref{mains}. Here, $J$ intertwines with changing the identification of $\R^{(k+1)(n-k)}=\R^{k(n-k)}\times\R^{(n-k)}\simeq Mat(k,n-k)\times\R^{(n-k)}=\{(A,b)\}$ by interchanging $b$ and the first row of $A$. Recall that $Jf=|s|^{-k-1}f(s^{-1},s^{-1}y)$. Let $A_b$ be the matrix $A$ with the first row replaced by $b$ and $a_1$ be the first row of $A$. Then define $R^\sharp F(A,b) =F(A_b,a_1)$. 
\begin{lem} For every $f\in L^p(\R^n)$, 
$$ T_{k,n}^\sharp Jf =R^\sharp T_{k,n}^\sharp f. $$ 
\end{lem}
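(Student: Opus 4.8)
The plan is to verify the identity directly by unravelling both sides in terms of integrals over $\R^k$ and tracking how the change of identification on $\R^{(k+1)(n-k)}$ corresponds to the pseudo-conformal map $J$ on $\R^n$. Write $\R^n = \R^k \times \R^{n-k}$ with coordinates $x = (x', v)$, and split the $\R^k$-variable of integration further as $x' = (s, y)$ with $s \in \R$ and $y \in \R^{k-1}$, matching the block structure used to define $Jf(s,y) = |s|^{-k-1} f(s^{-1}, s^{-1}y)$ (here I read the first $\R$-factor of $\R^n$ off against the variable $s$, the next $\R^{k-1}$ against $y$, and the last $\R^{n-k}$ against $v$, consistently with the convention fixed just before the statement). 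The essential point is that the substitution that turns $f$ into $Jf$ inside $\int_{\R^k} f(x', A(x')+b)\,dx'$ is exactly the substitution $s \mapsto s^{-1}$ in the first coordinate of $x'$, which rescales the affine slice and forces the swap of $b$ with the first row $a_1$ of $A$.

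First I would write out $T_{k,n}^\sharp Jf(A,b) = \int_{\R^{k}} Jf(x', A(x')+b)\,dx' = \int_{\R}\int_{\R^{k-1}} |s|^{-k-1} f(s^{-1}, s^{-1}y, A(s,y)+b)\,dy\,ds$, where $A(s,y)$ abbreviates the action of the matrix $A$ on the column $(s,y)^\top \in \R^k$, so $A(s,y) = s\,a_1 + A'(y)$ with $a_1$ the first row of $A$ and $A'$ the remaining $k-1$ rows. Then in the inner integral I would perform the change of variables $(s,y) \mapsto (\sigma, \eta) = (s^{-1}, s^{-1}y)$, whose Jacobian is $|s|^{-k}$ (one factor $s^{-1}$ from $ds \mapsto -s^{-2}d\sigma$ giving $|s|^{-2}$... more carefully: $\sigma = s^{-1}$, $\eta = s^{-1} y$, so $d\sigma\,d\eta = |s|^{-2}\cdot|s|^{-(k-1)}\,ds\,dy = |s|^{-(k+1)}\,ds\,dy$, i.e. $ds\,dy = |s|^{k+1}\,d\sigma\,d\eta$). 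Crucially $|s|^{-k-1}$ cancels against this Jacobian, leaving $\int f(\sigma, \eta, \ldots)\,d\sigma\,d\eta$ with no weight. It remains to rewrite the third argument: with $s = \sigma^{-1}$ and $y = \sigma^{-1}\eta$ we get $A(s,y)+b = \sigma^{-1} a_1 + \sigma^{-1} A'(\eta) + b = \sigma^{-1}\big(a_1 + A'(\eta) + \sigma b\big)$; I would then recognize $\sigma\mapsto$ (new first coordinate), and absorb the overall $\sigma^{-1}$ and the term $\sigma b$ by noting that $a_1 + A'(\eta) + \sigma b$ is precisely $A_b(\sigma,\eta) + a_1$, where $A_b$ is $A$ with its first row replaced by $b$ — because $A_b(\sigma,\eta) = \sigma b + A'(\eta)$. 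Comparing with $R^\sharp T_{k,n}^\sharp f(A,b) = T_{k,n}^\sharp f(A_b, a_1) = \int_{\R^k} f(z', A_b(z') + a_1)\,dz'$ completes the identification.

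The one point requiring care — and the place I expect the bookkeeping to bite — is making the block decomposition of $\R^{(k+1)(n-k)} \simeq Mat(k,n-k)\times\R^{n-k}$ line up correctly with the $\R^n = \R \times \R^{k-1} \times \R^{n-k}$ decomposition underlying $J$: one must check that "$b$" plays the role of the image of the first coordinate $s$ (not of some other coordinate), so that after $s \mapsto s^{-1}$ it is genuinely $b$ that ends up in the first-row slot and $a_1$ that ends up as the new translation vector. In particular I would double-check the sign/position conventions in the definition $A_b$ (first row replaced by $b$) against which rows of $A$ are hit by which coordinates of $x'$. Once the dictionary is fixed, the computation above is a single change of variables with an exact cancellation of the conformal weight, exactly paralleling the $k = n-1$ case in \cite{C11}; no rearrangement inequality or deeper input is needed. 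As a corollary, since $R^\sharp$ is manifestly a measure-preserving involution of $\R^{(k+1)(n-k)}$ (it merely permutes coordinates), it is a symmetry of \eqref{mains}, hence by Lemma \ref{TTsheq} the map $J$ is a symmetry of \eqref{main}, which is the fact invoked from \cite{D11}.
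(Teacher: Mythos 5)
Your overall strategy is the same as the paper's: expand $T_{k,n}^\sharp Jf(A,b)$, substitute $(s,y)\mapsto(\sigma,\eta)=(s^{-1},s^{-1}y)$ in the $\R^k$ integration variable, let the weight $|s|^{-k-1}$ cancel the Jacobian $|s|^{k+1}$, and read off the swap of $b$ with the first row $a_1$. However, there is a genuine error in your unravelling of $J$, and it is exactly at the point you flagged as "the place the bookkeeping might bite." The map $J$ is the full pseudo-conformal inversion of $\R^n=\R\times\R^{n-1}$: in $Jf(s,y)=|s|^{-k-1}f(s^{-1},s^{-1}y)$ the variable $y$ runs over \emph{all} $n-1$ trailing coordinates, including the $\R^{n-k}$ block (this is forced by the weight: for $\varphi(s,y)=(s^{-1},s^{-1}y)$ on $\R\times\R^{n-1}$ one has $|J_\varphi|^{1/p}=|s|^{-(n+1)(k+1)/(n+1)}=|s|^{-(k+1)}$, whereas your reading, which leaves the $v$-block unscaled, would not even make $J$ an $L^p$ isometry; it is also how $J$ is used in Section 3 and in the paper's own computation, where the third slot appears as $s^{-1}(A(s,x')+b)$). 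Because you left the last block unscaled, after your substitution the third argument comes out as $\sigma^{-1}\bigl(A_b(\sigma,\eta)+a_1\bigr)$ rather than $A_b(\sigma,\eta)+a_1$, and your proposal to "absorb the overall $\sigma^{-1}$" has no justification: that slot is not an integration variable, so there is nothing left to absorb it into, and the resulting integral is not $T_{k,n}^\sharp f(A_b,a_1)$.

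The fix is short and then your computation coincides with the paper's. With the correct reading, $Jf(s,y,u)=|s|^{-k-1}f(s^{-1},s^{-1}y,s^{-1}u)$ for $(s,y,u)\in\R\times\R^{k-1}\times\R^{n-k}$, so the third argument of $f$ is
\begin{equation*}
s^{-1}\bigl(A(s,y)+b\bigr)=a_1+A'(s^{-1}y)+s^{-1}b,
\end{equation*}
which under $\sigma=s^{-1}$, $\eta=s^{-1}y$ becomes exactly $a_1+A'(\eta)+\sigma b=A_b(\sigma,\eta)+a_1$ with no leftover factor; your Jacobian cancellation $ds\,dy=|s|^{k+1}\,d\sigma\,d\eta$ against $|s|^{-k-1}$ is correct as stated, and the identity $T_{k,n}^\sharp Jf=R^\sharp T_{k,n}^\sharp f$ follows, as does your closing remark that $R^\sharp$ is a measure-preserving permutation of coordinates and hence $J$ is a symmetry.
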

\begin{proof}
\begin{align*} 
T_{k,n}^\sharp Jf(A,b) &=\int_{\R^k}(Jf(x',A(x')+b))dx' \\
&= \int_{\R^k}|s|^{-(k+1)}f(s^{-1},s^{-1}x',s^{-1}(A(s,x')+b)))dx' \\
&= \int_{\R^k}(|s|^{-(k+1)}f(s^{-1},s^{-1}x',(A(1,s^{-1}x')+s^{-1}b)))dx' \\.
\end{align*} 
Change  variables so that $t=s^{-1}$ and $w=s^{-1}x'$ to obtain
\begin{align*} 
T_{k,n}^\sharp Jf(A,b) & =  \int_{\R^k}(|f(t,w',(A(1,w')+tb)))dx' \\
& =  \int_{\R^k}(|f(t,w',(A_b(t,w')+a_1)))^{n+1}dx'\\
& =R^\sharp T_{k,n}^\sharp f.  \qedhere
\end{align*} 
\end{proof} 

\end{section} 

 \begin{section}{Almost Lebesgue convexity}\label{sec:ALC} 

Recall Definition \ref{alconvexdef}: A set $E$ is  almost Lebesgue convex if for almost every pair $(x,y)\in E\times E$ the line segment $\overline{xy}\subset E$ up to a one-dimensional null set.   Throughout this section $E_L$ will denote the set of Lebesgue points of a set $E$, and for any set $A$, $ch(A)$ will be the convex hull of $A$. 
\begin{lem}  \label{alconvex} A set $E$ is almost Lebesgue convex if and only if there exists an open convex set $\Cv$ such that $|E\Delta \Cv|=0$. In this case, $\Cv$ is the convex hull of the Lebesgue points of $E$. \end{lem}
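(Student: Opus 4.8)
The plan is to prove the two directions separately, with the ``if'' direction being essentially a soft exercise and the ``only if'' direction carrying the real content. For the ``if'' direction, suppose $|E\Delta\Cv|=0$ for an open convex set $\Cv$. Since a measure-zero symmetric difference preserves Lebesgue points (a point is a Lebesgue point of $E$ iff it is one of $\Cv$), and since every point of the open convex set $\Cv$ is a Lebesgue point while no point outside $\overline{\Cv}$ is, we get $\Cv\subseteq E_L\subseteq\overline{\Cv}$, hence $ch(E_L)=\Cv$. Now take a.e.\ pair $(x,y)\in E\times E$: by Fubini almost every such pair lies in $E_L\times E_L\subseteq\overline{\Cv}\times\overline{\Cv}$, and for $x,y\in\overline{\Cv}$ with at least one in the open set $\Cv$ the open segment $\overline{xy}$ minus endpoints lies in $\Cv$; since a.e.\ pair has both coordinates in $E_L$, and $E=\Cv$ up to null set, the segment lies in $E$ up to a one-dimensional null set (the two endpoints). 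This shows $E$ is almost Lebesgue convex, and also pins down $\Cv=ch(E_L)$, proving the last sentence.

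For the ``only if'' direction, the idea is to show directly that $\Cv:=\mathrm{int}\,ch(E_L)$ works. First, $E_L$ has full measure in $E$ (Lebesgue density theorem), so $|E\setminus ch(E_L)|=0$, i.e.\ $|E\setminus\Cv|=0$ provided we can show $ch(E_L)$ and its interior differ by a null set --- which holds automatically once $ch(E_L)$ has nonempty interior, and that in turn holds because $E_L$ has positive measure (assuming $|E|>0$; the case $|E|=0$ is trivial with $\Cv=\emptyset$). The harder inclusion is $|\Cv\setminus E|=0$, equivalently: almost every point of $ch(E_L)$ is a Lebesgue point of $E$. The mechanism is the almost Lebesgue convexity hypothesis combined with a Fubini/iterated-segment argument: for $x\in E_L$ and a.e.\ direction, the ray from $x$ hits $E$ in a set of full linear measure near $x$, and more globally, picking $n$ ``generic'' Lebesgue points $p_0,\dots,p_n$ of $E$ whose convex hull is a nondegenerate simplex containing a given point $z\in\Cv$, one iterates the segment property: a.e.\ point on a.e.\ segment $\overline{p_0 y}$ lies in $E$, then for such points one again applies the segment property, and after finitely many steps one covers a full-measure subset of the simplex $ch(p_0,\dots,p_n)$, showing $z$ lies in $E$ up to null sets on a full-measure set of such $z$. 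Density points of $E$ are then automatically Lebesgue points, giving $z\in E_L$ for a.e.\ $z\in\Cv$; hence $|\Cv\setminus E_L|=0$, and combined with $ch(E_L)=\overline{\Cv}$ we conclude $\Cv=ch(E_L)$ up to measure zero and $|E\Delta\Cv|=0$.

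The main obstacle I anticipate is the iterated-segment (Fubini) argument in the ``only if'' direction: making precise that ``almost every pair'' in the segment property can be bootstrapped to cover a full-measure subset of the convex hull, while controlling the null sets at each step. The clean way is to fix $n+1$ affinely independent Lebesgue points $p_0,\dots,p_n$ (possible since $|E_L|>0$ forces $E_L$ to not lie in a hyperplane), parametrize a neighborhood of their simplex by barycentric-type coordinates, and show that for a.e.\ choice of auxiliary points the composition of segment maps is measure-preserving up to the Jacobian, so that a full-measure hypothesis upstairs pushes to a full-measure conclusion on the simplex; then let the $p_i$ range over a dense countable subset of $E_L$ to exhaust all of $\Cv$. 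One must also verify that density points of $E$ coincide with Lebesgue points of $\one_E$, which is standard, and that the convex hull of a positive-measure set has interior equal to itself up to measure zero, which follows from convexity. None of these steps is deep, but the bookkeeping of null sets through the Fubini iteration is where care is required.
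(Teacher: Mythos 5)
Your overall plan coincides with the paper's: the ``only if'' direction is proved there by bootstrapping the two-point segment property to the statement that almost every $m$-tuple of points of $E$ has its convex hull contained in $E$ up to an $(m-1)$-dimensional null set (an induction on $m$, Lemma~\ref{ALChulls}, where the null-set bookkeeping you defer is handled by working inside a star-shaped set $S_{x_0}$ about a fixed good point and parameterizing $m$-tuples by $(m-1)$-planes), and then passing from the $(n+1)$-tuple statement to the conclusion that the convex hull of the Lebesgue points is open and agrees with $E$ up to a null set (Lemma~\ref{ntuples}, via Lebesgue density, a pigeonhole choice of simplices, and a countable exhaustion by second countability -- the analogue of your ``dense countable family of $p_i$''). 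So in outline your ``only if'' argument is the paper's argument; what you have postponed as ``bookkeeping'' is precisely the content of those two lemmas.

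The one step that is genuinely wrong as written is the end of your ``if'' direction. From $x,y\in\Cv$ you get that the open segment lies in $\Cv$, and you then conclude it lies in $E$ up to a one-dimensional null set ``(the two endpoints)'' because $|\Cv\setminus E|=0$. For a fixed pair this inference fails: $\Cv\setminus E$ is only $n$-dimensionally null and may contain entire segments (take $E=\Cv\setminus L$ with $L$ a line segment; both endpoints of a segment containing $L$ can be Lebesgue points of $E$, yet $\overline{xy}\cap L\not\subset E$). The claim you need is only for \emph{almost every} pair, and the missing ingredient is a Fubini argument in the pair (or in a segment parameter): for each fixed $t\in(0,1)$ and fixed $x$, the map $y\mapsto(1-t)x+ty$ is an affine bijection, so the set of $(x,y,t)$ with $(1-t)x+ty\in\Cv\setminus E$ is null, and Fubini gives the a.e.-pair statement; the paper carries this out with polar coordinates centered at a fixed point of $E\cap\Cv$. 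A second, minor slip: from $\Cv\subseteq E_L\subseteq\overline{\Cv}$ alone one cannot conclude $ch(E_L)=\Cv$ (a single boundary point in $E_L$ would enlarge the hull); you need the supporting-hyperplane fact that boundary points of an open convex set have density at most $1/2$, hence are not density points, so in fact $E_L=\Cv$ and the identification of $\Cv$ with $ch(E_L)$ follows.
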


We start with two lemmas that together prove the ``only if'' direction when $|E|>0$. 

\begin{lem}\label{ntuples} For any set $E$ with positive measure, if for almost every $(n+1)$-tuple $(x_1,\ldots,x_{n+1})\in E^{n+1}$, the convex hull $ch(x_1,\ldots,x_{n+1})\subset E$ up to an $n$-dimensional null set, then the convex hull of the Lebesgue points of $E$, $ch(E_L)$,  is an open convex set, and $|ch(E_L)\Delta E|=0$. 
\end{lem}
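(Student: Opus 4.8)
\textbf{Proof plan for Lemma \ref{ntuples}.}

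The plan is to extract from the hypothesis on almost every $(n+1)$-tuple a \emph{genuine} convexity statement about Lebesgue points, then upgrade that to equality up to a null set with the convex hull. First I would record the elementary fact that $|E_L \Delta E| = 0$ (Lebesgue density theorem), so it suffices to work with $E_L$ throughout, and that the hypothesis can be rephrased in terms of $E_L$: since $|E \setminus E_L| = 0$, for almost every $(n+1)$-tuple in $E_L^{n+1}$ the convex hull lies in $E$ up to an $n$-dimensional null set, hence (intersecting with $E_L$, and using that $|E \setminus E_L| = 0$) up to an $n$-dimensional null set in $E_L$.

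The key step is to promote ``almost every $(n+1)$-tuple works'' to ``\emph{every} point of $ch(E_L)$ that is an interior point is a density point of $E$.'' Here is the mechanism I would use. Fix $x \in ch(E_L)$ lying in the interior of $ch(E_L)$; by Carath\'eodory, $x = \sum_{j=1}^{n+1}\lambda_j x_j$ for some $x_j \in E_L$ and $\lambda_j \ge 0$ summing to $1$, and since $x$ is interior we may in fact take the $x_j$ to be affinely independent with all $\lambda_j > 0$, so $x$ lies in the interior of the simplex $ch(x_1,\dots,x_{n+1})$. Now perturb: each $x_j$ is a density point of $E_L$, so for each $j$ there is a positive-measure set $U_j$ of points near $x_j$ inside $E_L$, and one can arrange (shrinking the $U_j$) that $x$ lies in the interior of $ch(y_1,\dots,y_{n+1})$ for \emph{every} choice $(y_1,\dots,y_{n+1}) \in U_1 \times \cdots \times U_{n+1}$. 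Since the hypothesis holds for almost every tuple, $ch(y_1,\dots,y_{n+1}) \subset E$ up to an $n$-dimensional null set for almost every such tuple; integrating the indicator of $(ch(y_1,\dots,y_{n+1}) \setminus E)$ over $U_1 \times \cdots \times U_{n+1}$ and applying Fubini, together with the fact that each simplex has positive $n$-dimensional measure near $x$ and the map $(y_1,\dots,y_{n+1}) \mapsto$ (simplex) is nondegenerate, forces $x$ to be a density point of $E$ — more carefully, one gets that a positive-measure set of points arbitrarily close to $x$ lies in $E$, and pushing this through at every scale (by rescaling the $U_j$ towards the $x_j$) gives density $1$ at $x$. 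So $\mathrm{int}\,ch(E_L) \subset E_L$, and in particular $E_L$ contains an open set; hence $ch(E_L)$ is the convex hull of a set with nonempty interior, so $ch(E_L)$ is open (a convex set whose interior is dense in it and which contains its own interior — here $\mathrm{int}\,ch(E_L) = ch(E_L)$ follows once we know $ch(E_L) \subset \overline{\mathrm{int}\,ch(E_L)}$, standard for convex sets with interior). Finally, $ch(E_L) \supset E_L$ trivially gives $|ch(E_L) \setminus E| \le |ch(E_L) \setminus E_L|$; but $ch(E_L) = \mathrm{int}\,ch(E_L) \subset E_L$ from the previous step, so $|ch(E_L) \setminus E_L| = 0$, and combined with $|E \setminus E_L| = 0$ and $|E_L \setminus ch(E_L)| = 0$ we conclude $|ch(E_L) \Delta E| = 0$.

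The main obstacle I anticipate is the perturbation/density argument in the middle paragraph: making rigorous the claim that the measure-zero exceptional set of bad tuples cannot prevent $x$ from being a density point of $E$. The delicate point is that ``$ch(y_1,\dots,y_{n+1}) \subset E$ up to an $n$-dimensional null set'' is a statement about an $n$-dimensional slice, while density of $E$ at $x$ is an $n$-dimensional (full) statement, so one must integrate the slicewise information over the $(n+1)$-fold product of the $U_j$'s and use a change-of-variables / coarea-type computation to recover positive $n$-dimensional mass of $E$ near $x$; controlling the Jacobian of $(y_1,\dots,y_{n+1},t) \mapsto \sum t_j y_j$ away from the degenerate locus (affinely dependent tuples, which form a null set) is what makes this go through. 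Everything else — Carath\'eodory, the density theorem, and the elementary facts about open convex sets — is routine.
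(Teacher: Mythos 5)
Your overall strategy (show that interior points of $ch(E_L)$ are density points of $E$, then compare $ch(E_L)$ with $E$) is viable and differs somewhat from the paper, which instead shows that every Lebesgue point has a ball around it essentially contained in $E$, uses this to fatten an arbitrary simplex $ch(x_1,\ldots,x_{n+1})$ with vertices in $E_L$ into the interior of a simplex $ch(y_1,\ldots,y_{n+1})$ for which the almost-everywhere hypothesis can be invoked, and finishes with second countability and a countable subcover. However, your plan has a genuine gap at its central step: the assertion that, because $x\in\mathrm{int}\,ch(E_L)$, Carath\'eodory lets you take $x_1,\ldots,x_{n+1}\in E_L$ affinely independent with all barycentric coordinates positive, so that $x\in\mathrm{int}\,ch(x_1,\ldots,x_{n+1})$. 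Carath\'eodory gives only $x\in ch(x_1,\ldots,x_{n+1})$, possibly on a face, and the strengthened statement is false for general sets: for $S$ the four vertices of a square in $\R^2$, the center is interior to $ch(S)$ but lies on the boundary of every triangle with vertices in $S$ (Steinitz's bound $2n$ is sharp). For $E_L$, which consists of density points, the statement is true, but it requires an actual argument — for instance, take a finite Steinitz configuration in $E_L$ whose hull has $x$ in its interior, perturb each point within $E_L$ (possible because each is a density point, so nearby positive-measure pieces of $E_L$ exist) into general position with respect to $x$, and then Carath\'eodory plus general position forces $x$ off all faces of the selected simplex. Without something of this kind the perturbation step that follows ("one can arrange that $x$ lies in the interior of $ch(y_1,\ldots,y_{n+1})$ for every choice") has nothing to start from: if $x$ sits on a face of the initial simplex, small perturbations of the vertices can push the simplex entirely off $x$. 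Separately, note that your anticipated Fubini/coarea difficulty is not actually present in this lemma: the hypothesis concerns full-dimensional hulls, so a single good tuple whose simplex contains a neighborhood of $x$ already gives $|B(x,\delta)\setminus E|=0$; the slicing issue you describe belongs to Lemma \ref{ALChulls}, not here.

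The openness conclusion is also not justified as written. The convex hull of a set with nonempty interior need not be open (take an open ball together with one far-away point), and the standard fact $ch(E_L)\subset\overline{\mathrm{int}\,ch(E_L)}$ does not yield $ch(E_L)=\mathrm{int}\,ch(E_L)$. What you need is $E_L\subset\mathrm{int}\,ch(E_L)$, i.e., that no Lebesgue point lies on the boundary of $ch(E_L)$; this is true and easy (a supporting hyperplane at such a point would put $E$, up to a null set, in a half-space, forcing the density of $E$ there to be at most $1/2$), or it follows from the paper's observation that each Lebesgue point has a ball around it essentially inside $E$, but it must be stated. The measure identity, by contrast, survives without openness, since $ch(E_L)\setminus E_L$ would in any case lie in $\mathrm{int}\,ch(E_L)\setminus E_L$ together with the Lebesgue-null boundary of the convex set $ch(E_L)$.
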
  

\begin{proof} 
As $E_L\subset ch(E_L)$, $|E\setminus ch(E_L)|<|E\setminus E_L|=0$. Thus $|E\setminus ch(E_L)|=0$. \\
\indent It remains to show that $ ch(E_L)$ is open and $| ch(E_L)\setminus E|=0$. The main step is  to show that for each $(n+1)$-tuple of points $\{x_1,\ldots,x_{n+1}\}\in E_L^{n+1}$, there exists an open set $O_{\{x_1,\ldots,x_{n+1}\}}$, such that $ch(x_1,\ldots,x_{n+1})\subset O_{\{x_1,\ldots,x_{n+1}\}}\subset  ch(E_L)$ and $O_{\{x_1,\ldots,x_{n+1}\}}\setminus E$ is a null set.\\
\indent This claim implies the lemma as follows: By definition,
 $$\textstyle  ch(E_L)=\bigcup_{\{x_1,\ldots,x_{n+1}\} \in E_L^{n+1}}ch(x_1,\ldots, x_{n+1}).$$ 
As $ch(x_1,\ldots,x_{n+1})\subset O_{\{x_1,\ldots,x_{n+1}\}}$,
 $$\textstyle  ch(E_L)\subset \bigcup_{\{x_1,\ldots,x_{n+1}\}\in E_L^{n+1}}O_{\{x_1,\ldots, x_{n+1}\}}.$$
Similarly, because each $ O_{\{x_1,\ldots,x_{n+1}\}}\subset  ch(E_L)$,
 $$\textstyle  ch(E_L) \supset \bigcup_{\{x_1,\ldots,x_{n+1}\}\in E_L^{n+1}}O_{\{x_1,\ldots, x_{n+1}\}}.$$
Therefore, 
$$\textstyle  ch(E_L) = \bigcup_{\{x_1,\ldots,x_{n+1}\}\in E_L^{n+1}}O_{\{x_1,\ldots, x_{n+1}\}}.$$
As $ ch(E_L)$ is a union of open sets, it is open. Moreover, by the second countability of  $\R^n$, there exists $\{O_i\}$ a countable collection of $O_{\{x_1,\ldots,x_{n+1}\}}$,  such that $ ch(E_L)=\bigcup_{i=1}^\infty O_i$.  Thus $ ch(E_L)\setminus E\subset\bigcup_{i=1}^\infty (O_i \setminus E)$, which is a null set by countable additivity.  \\
\indent It remains to construct these $O_{\{x_1,\ldots,x_{n+1}\}}$. Begin by observing that given the conditions of the lemma, if $x\in E_L$ then there exists $\delta>0$ such that $B(x,\delta)\subset E$ up to an $n$-dimensional null set. Since $x\in E_L$, there exists $\delta'>0$ such that $|B(x,\delta')\cap E|\geq\frac{1}{n+1}|B(x,\delta')|$.  Applying the pigeonhole principle, there exists an $n$-tuple $\{x_i\}_{i=1}^{n+1}$  such that $x$ is in the interior of $ch(x_1,\ldots, x_{n+1})$ and $ch(x_1,\ldots, x_{n+1})\subset E$ up to an $n$-dimensional null set. Therefore, there exists $\delta>0$ such that $B(x,\delta)\subset ch(x_1,\ldots, x_{n+1})$, $B(x,\delta)\subset E$ up to an $n$-dimensional null set. \\
\indent  For any $(n+1)$-tuple of points $(x_1,\ldots, x_{n+1})\in E_L^{n+1}$, using the observation above, there exists a set of positive measure in $E_L^{n+1}$ of $y_1,\ldots, y_{n+1}$ such that $ch(x_1,\ldots, x_{n+1})\subset ch(y_1,\ldots, y_{n+1})$. By the hypothesis of the lemma,  for almost every such $(n+1)$-tuple, $ ch(y_1,\ldots, y_{n+1})\subset E$ up to a null set. Pick one of these $(n+1)$-tuples and take $O_{(x_1,\ldots, x_{n+1})}$ to be the interior of $ ch(y_1,\ldots, y_{n+1})$.  
\end{proof} 

\begin{lem}\label{ALChulls} If $E\subset\R^n$ is an almost Lebesgue convex set with positive measure and $m\in[2,n+1]$, then for almost every $m$-tuple $(x_1,\ldots, x_{m})\in E^{m}$, the convex hull $ch(x_1,\ldots, x_{m})\subset E$ up to an $(m-1)$-dimensional null set. 
\end{lem}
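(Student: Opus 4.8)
The plan is to induct on $m$, with the base case $m=2$ being essentially the definition of almost Lebesgue convexity together with a Fubini argument, and the inductive step reducing an $m$-tuple to a union of $(m-1)$-dimensional slices. For $m=2$: the hypothesis says that for a.e.\ pair $(x,y)\in E\times E$ the segment $\overline{xy}$ lies in $E$ up to a one-dimensional null set, which is precisely the claim since the convex hull of two points is the segment between them. For the inductive step, fix $m\in[3,n+1]$ and assume the statement for $m-1$.

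First I would set up coordinates adapted to an $m$-tuple. Given $(x_1,\ldots,x_m)\in E^m$, write the convex hull $ch(x_1,\ldots,x_m)$ as the union over $t\in[0,1]$ of the sets $ch\big((1-t)x_m + t x_1,\ldots\big)$ — more precisely, for a point $z = \sum_{i=1}^m \lambda_i x_i$ with $\lambda_m < 1$, we can write $z = (1-\lambda_m) w + \lambda_m x_m$ where $w = \sum_{i=1}^{m-1} \frac{\lambda_i}{1-\lambda_m} x_i \in ch(x_1,\ldots,x_{m-1})$. So $ch(x_1,\ldots,x_m)$ is foliated by the segments $\overline{w\,x_m}$ as $w$ ranges over $ch(x_1,\ldots,x_{m-1})$. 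The strategy is then: by the inductive hypothesis, for a.e.\ $(m-1)$-tuple $(x_1,\ldots,x_{m-1})$, $ch(x_1,\ldots,x_{m-1})\subset E$ up to an $(m-2)$-dimensional null set; and for a.e.\ pair consisting of a point $w$ in that hull and the point $x_m$, the segment $\overline{w\,x_m}$ lies in $E$ up to a one-dimensional null set. Integrating the "one-dimensional null set" estimate over $w$ in the $(m-1)$-hull (which has positive $(m-2)$-dimensional measure, hence the good $w$'s have full measure there) yields that $ch(x_1,\ldots,x_m)\subset E$ up to an $(m-1)$-dimensional null set, by a Fubini/Tonelli argument on the $(m-1)$-dimensional affine span.

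The one subtlety, and what I expect to be the main obstacle, is making the "a.e.\ pair $(w,x_m)$" quantifier work: the hypothesis of almost Lebesgue convexity gives control for a.e.\ pair in $E\times E$ with respect to $2n$-dimensional Lebesgue measure, but here $w$ is constrained to lie in an $(m-1)$-dimensional hull, a set of $\R^n$-measure zero, so the hypothesis does not directly apply to such $w$. The fix is to not fix the tuple first: instead, work in the product space $E^m$ with its $nm$-dimensional measure, apply the inductive hypothesis to the first $m-1$ coordinates and the $m=2$ hypothesis to (a generic point of the first hull, $x_m$) \emph{simultaneously}, and use Fubini on $E^m$ to conclude that for a.e.\ $(x_1,\ldots,x_m)$ the desired slicing structure holds. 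Concretely, one introduces an auxiliary variable: parametrize points of $ch(x_1,\ldots,x_{m-1})$ by barycentric coordinates $\mu\in\Delta^{m-2}$ (the standard simplex) and consider the map $(x_1,\ldots,x_m,\mu)\mapsto (w(\mu),x_m)$; one checks this pushes forward a suitable absolutely continuous measure on $E^m\times\Delta^{m-2}$ to an absolutely continuous measure on $E\times E$ (the Jacobian is nondegenerate off a null set), so the a.e.\ statement on $E\times E$ pulls back, and then Fubini in $\mu$ and in $x_m$ (for fixed $x_1,\ldots,x_m$) gives that $ch(x_1,\ldots,x_m) = \bigcup_\mu \overline{w(\mu)\,x_m}$ lies in $E$ up to an $(m-1)$-dimensional null set. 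The measurability bookkeeping (ensuring the exceptional sets are measurable in the relevant product spaces) is routine but needs care; everything else is Tonelli.
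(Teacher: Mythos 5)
Your proposal is correct, and it takes a genuinely different route from the paper. The paper runs the same induction and the same cone decomposition (an apex plus the hull of the remaining points), but it handles the difficulty you isolate — that the ``a.e.\ pair in $E\times E$'' hypothesis says nothing about pairs lying on a lower-dimensional hull — by a regularization device: it fixes a generic apex $x_0$, builds in polar coordinates the star-shaped set $S_{x_0}=\bigcup_{\theta}\overline{x_0(\theta,r_\theta)}$ with $|S_{x_0}\Delta E|=0$ (so that \emph{every} segment from $x_0$ inside $S_{x_0}$ is usable, not just a.e.\ ones), slices by translates of a generic $(m-1)$-plane $\pi\in\mathcal{G}_{m-1,n}$, invokes Lemma \ref{ntuples} inside each slice to replace $E\cap(\pi,y)$ by a genuinely convex set $\Cv_{(\pi,y)}$, and then cones from $x_0$ over the slice hulls using star-shapedness. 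You instead keep all variables random and pull the null exceptional set of bad pairs back through the surjective linear maps $(x_1,\ldots,x_m)\mapsto(\sum_{i<m}\mu_i x_i,\,x_m)$ for fixed $\mu\in\Delta^{m-2}$ (the preimage of a Lebesgue-null subset of $\R^{2n}$ is null in $\R^{nm}$), then apply Tonelli in $\mu$; this needs neither $S_{x_0}$ nor Lemma \ref{ntuples}, so your proof of this lemma is more elementary and self-contained, while the paper's route produces along the way concrete regularized representatives that mesh with the rest of Section \ref{sec:ALC}. Two details to make explicit when writing it up: since the base hypothesis only governs pairs in $E\times E$, you must first use the inductive hypothesis together with a.e.\ affine independence of $(x_1,\ldots,x_{m-1})$ to guarantee $w(\mu)\in E$ for a.e.\ $\mu$ before quoting it (your ``generic point of the first hull'' is doing exactly this, but it should be said); and the concluding Fubini is in the cone parameters $(\mu,t)\mapsto(1-t)w(\mu)+t x_m$ (not ``in $x_m$''), whose Jacobian on the $(m-1)$-dimensional affine span is nonvanishing off $\{t=1\}$ for affinely independent tuples, so a.e.\ statements in $(\mu,t)$ transfer to $(m-1)$-a.e.\ statements on the hull.
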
 
\begin{proof} 
The proof proceeds by induction on $m$. If $E\subset\R^n$ is almost Lebesgue convex, then by definition the base case $m=2$  holds. Assume $m\in[2,n]$ and the statement is true for $m$. We seek to prove that for almost every $x_0$, for almost every $x_1,\ldots,x_m$, $ch(x_0,\ldots, x_{m})\subset E$ up to an $m$-dimensional null set.  \\
\indent Fix $x_0\in E$ such that for almost every $y$, $|\overline{x_0y}\setminus E|=0$. By almost Lebesgue convexity, it is enough to prove the statement for every such $x_0$.  Working in  polar coordinates centered at  $x$,   define $r_\theta = \sup\{r: |\overline{x_0(\theta,r)}\setminus E|=0\}$. Set  
$$S_{x_0} =\bigcup_{\theta\in\Sp^{n-1}}\overline{x_0(\theta,r_\theta)}.$$  
By the definition of $r_\theta$, $|S_{x_0} \setminus E|=0$. Moreover, because $|\overline{x_0y}\setminus E|=0$ for almost every $y$, $|E\setminus S_{x_0} |=0$. Therefore, $|S_{x_0} \Delta E|=0$. \\
\indent  Parameterize $m$-tuples in $\R^n$, losing a null set, by $(\pi,y, v_1, \ldots, v_m)$ where $\pi \in \mathcal{G}_{m-1,n}$, $y\in\pi^\perp$, $v_i\pi$ for $i\in[1,m]$.  Let $(\pi,y)$ denote the $(m-1)$-plane $\pi$ translated by $y$.  By the induction hypothesis, for almost every $\pi$, for almost every $y$, for  almost every $m$-tuple, $v_1,\ldots, v_m\in\pi^{m}$ such that $v_1 +y,\ldots,v_m+y \in E^{m}$,  $ch(v_1+y,\ldots,v_m+y)\subset E$  up to an $(m-1)$-dimensional null set.  \\
\indent Fix $\pi\in \mathcal{G}_{m-1,n}$ such that this condition holds. For almost every $y\in\pi^\perp$, $(\pi,y)\cap E$ satisfies  the conditions of Lemma \ref{ntuples}, hence there is a convex set $\Cv_{(\pi,y)}$ such that $|((\pi,y)\cap E)\Delta\Cv_{(\pi,y)}|=0$. For the null set of $y\in\pi^\perp$ for which such a set does not exist, let $\Cv_{(\pi,y)}$ be the empty set.  Set $$\Cv_\pi=\bigcup_{y\in\pi^\perp}\Cv_{(\pi,y)}.$$
Then $|\Cv_\pi\Delta E|=0$, and moreover, $|\Cv_\pi\Delta S_{x_0} |=0$. Thus for almost every $y\in\pi^\perp$, $|\Cv_{(\pi,y)}\Delta(S_{x_0} \cap(\pi,y))|=0$. Using that  $|\Cv_{(\pi,y)}\setminus S_{x_0} |=0$ and $S_{x_0} $ is star-shapped about $x_0$, for almost every $y\in\pi^\perp$, for  almost every $m$-tuple $v_1,\ldots, v_m\in\pi^{m}$ such that $v_1 +y,\ldots,v_m+y \in E^{m}$, $|ch(x_0,v_1+y,\ldots,v_m+y)\setminus S_{x_0} |=0$.  As $|S_{x_0} \Delta E|=0$, it follows that for  almost every $m$-tuple, $v_1,\ldots, v_m\in\pi^{m}$ such that $v_1 +y,\ldots,v_m+y \in E^{m}$, $ |ch(x_0,v_1+y,\ldots,v_m+y)\setminus E|=0$. 
\end{proof}

\begin{proof}[Proof of Lemma \ref{alconvex} ]
First consider the case that $|E|=0$.  Any null set is almost Lebesgue convex. The set of Lebesgue points for any null set is the empty set which is an open convex set equal to $E$ up to a null set. Hence the theorem holds when $|E|=0$.

The ``only if'' direction when $|E|>0$ is addressed by Lemmas \ref{ntuples} and \ref{ALChulls}.

To see the ``if'' direction, assume there exists an open convex set $\Cv$ such that $|E\Delta\Cv|=0$. As $|E|>0$, $| E\cap\Cv|>0$.  Fix any $x\in E\cap\Cv$. Take polar coordinates centered at $x$. For every $\theta\in\Sp^n$, define $r_\theta=\inf\{r:(\theta,r)\notin\Cv\}$. $r_\theta>0$ as $\Cv$ is open.  Further as $|\Cv\setminus E|=0$,  for almost every $\theta$, for every $0<r<r_\theta$ such that $(\theta,r)\in E$ the line segment in the direction $\theta$ up to distance $r$ is contained in $E$ up to a one-dimensional null set. As almost every point of $\Cv$ will be some $(\theta,r)$ such that this condition holds, it will hold for almost every point of $E$ as well. As almost every $x\in E $ is in $E\cap\Cv$, this suffices. 
\end{proof}

\end{section}

\begin{section}{Nonnegative extremizers are almost everywhere positive}

In order to show that nonnegative extremizers of \eqref{main} are positive almost everywhere, we instead prove a slightly more general statement. \\
\indent Note that all extremizers of \eqref{main} satisfy the Euler-Lagrange equation
\begin{equation}	\label{EL}
		f(x)=\lambda(T_{k,n}^*[(T_{k,n}f)^{q_0}])^{p_0}(x)
\end{equation}
where  $q_0=q-1$, $p_0=\frac{1}{p-1}$, $\lambda$ depends on $p,q,n,k$ and $f$,  and $T_{k,n}^*$ is the dual of the $k$-plane transform. \\

\begin{prop} If $f(x)\in L^p(\R^n)$ is a nonnegative solution of \eqref{EL} with $q\geq2 $, then either $f(x)>0$ for almost every $x\in\R^n$ or $f(x)=0$  for almost every $x\in\R^n$. \end{prop}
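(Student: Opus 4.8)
The plan is to exploit the structure of the Euler–Lagrange equation \eqref{EL}, which says $f = \lambda\,(T_{k,n}^*[(T_{k,n}f)^{q_0}])^{p_0}$ with $q_0 = q-1 \geq 1$ and $p_0 = 1/(p-1) > 0$. Since the outer power $p_0$ is positive, $f(x) = 0$ if and only if $(T_{k,n}^*[(T_{k,n}f)^{q_0}])(x) = 0$. Writing out the dual operator explicitly, $T_{k,n}^* g(x) = \int_{\{(\theta,y)\,:\,x\in\theta+y\}} g(\theta,y)\,d\nu(\theta)$ for the appropriate measure $\nu$ on $k$-planes through $x$, this means $f(x) = 0$ precisely when $(T_{k,n}f)(\theta, y)^{q_0} = 0$ for $\nu$-almost every affine $k$-plane $\theta+y$ passing through $x$, i.e. when $T_{k,n}f$ vanishes on almost every $k$-plane through $x$. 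So set $Z = \{x : f(x) = 0\}$; the task is to show that if $|Z| > 0$ then $Z = \R^n$ up to a null set.

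First I would record the elementary observation that for $x \in Z$, $T_{k,n}f(\theta+y) = 0$ for almost every $k$-plane through $x$; since $T_{k,n}f(\theta+y) = \int_{\theta} f(w+y)\,d\lambda_\theta(w)$ is an integral of a nonnegative function, this forces $f = 0$ $\lambda_\theta$-a.e. on that $k$-plane. In other words, almost every $k$-plane meeting $Z$ (in the sense that it passes through a density point of $Z$, or more carefully: for a.e. $x\in Z$, a.e. $k$-plane through $x$) lies inside $Z$ up to a $k$-dimensional null set. The next step is a measure-theoretic/Fubini argument: fix a density point $x_0$ of $Z$; for almost every $k$-plane $P$ through $x_0$ we have $|P \cap Z^c|_k = 0$. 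Now slide: for a point $x_1 \notin Z$ (if one exists with positive-measure complement), one wants to produce a $k$-plane through $x_1$ that also passes "essentially through $Z$" and derive a contradiction. The cleanest route is to integrate: $\int_{Z^c} \big(\text{measure of }k\text{-planes through }x\text{ that are }\subset Z\text{ up to null set}\big)$ — using a double-fibration / incidence-measure computation (the same Drury-type coarea identity underlying Lemma 1) to show that if $|Z| > 0$ and $|Z^c| > 0$ then a positive-measure family of $k$-planes simultaneously meets a density point of $Z$ (hence lies in $Z$) and a density point of $Z^c$ (hence cannot lie in $Z$), which is absurd.

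Concretely, I would argue as follows. Suppose $|Z| > 0$ and $|Z^c| > 0$. For a.e. $x \in Z$, a.e. $k$-plane through $x$ satisfies $|P\cap Z^c|_k = 0$; call such planes "good." By the incidence measure on $\mathcal{M}_{k,n}$ (pairing Haar measure on directions with Lebesgue measure transverse), the set of good planes has positive measure in $\mathcal{M}_{k,n}$ — indeed its complement, intersected with planes through $x$, is null for a.e.\ $x\in Z$, and integrating over $Z$ (which has positive measure) via the coarea formula shows the good planes form a set of positive measure. But for a good plane $P$, $|P \cap Z^c|_k = 0$, so $P$ passes through no density point of $Z^c$. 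On the other hand, since $|Z^c| > 0$, almost every point of $Z^c$ is a density point, and by the same incidence/coarea computation the set of $k$-planes passing through a density point of $Z^c$ has full measure in $\mathcal{M}_{k,n}$ (any positive-measure set in $\R^n$ is met by a.e.\ $k$-plane). These two statements contradict each other: a positive-measure family of planes that avoid all density points of $Z^c$ cannot coexist with the full-measure family of planes meeting $Z^c$ in positive $k$-measure. Hence either $|Z| = 0$ or $|Z^c| = 0$, which is the claim.

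The main obstacle is making the incidence/Fubini bookkeeping fully rigorous — in particular, justifying that the set of $k$-planes which meet the positive-measure set $Z^c$ in positive $k$-dimensional measure has full measure in $\mathcal{M}_{k,n}$ (this is a standard consequence of the slicing identity $\int_{\mathcal{M}_{k,n}} |P\cap A|_k\, d\mu(P) = c_{k,n}|A|$ for measurable $A$, valid because the $k$-plane transform of $\one_A$ is a.e.\ finite), and dually that "good" planes form a positive-measure set. One must also be careful that the Euler–Lagrange equation is interpreted for the genuine (everywhere-defined) representative given by the right-hand side of \eqref{EL}, so that the pointwise statement "$f(x) = 0 \iff T_{k,n}f$ vanishes on a.e.\ plane through $x$" holds for every $x$, not merely a.e.\ $x$; this is where $q \geq 2$ (so $q_0 \geq 1$) and $p_0 > 0$ are used, guaranteeing the outer power is a genuine power that vanishes only where its base does. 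Once these two slicing facts are in hand, the contradiction is immediate and the proposition follows.
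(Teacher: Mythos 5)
There is a genuine gap in the incidence argument at the end. Your contradiction rests on the claim that ``the set of $k$-planes passing through a density point of $Z^c$ has full measure in $\mathcal{M}_{k,n}$ (any positive-measure set in $\R^n$ is met by a.e.\ $k$-plane).'' This is false: the measure on $\mathcal{M}_{k,n}$ is the product of Haar measure on directions with Lebesgue measure on the $(n-k)$-dimensional translation parameter, so it is infinite, and every affine $k$-plane at distance greater than $R$ from the origin misses a set contained in $B(0,R)$; if $Z^c$ happens to be bounded (which you cannot rule out), the planes meeting it form a set of positive but far from full measure. The slicing identity $\int_{\mathcal{M}_{k,n}}|P\cap Z^c|_k\,d\mu(P)=c|Z^c|$ only gives positive measure for that family, and likewise your ``good'' planes (those essentially contained in $Z$) only form a positive-measure family. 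Two positive-measure subsets of $\mathcal{M}_{k,n}$ need not intersect, so no contradiction follows; the argument as written does not close.

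The repair is to localize at a single point rather than argue globally. Since $|Z|>0$, pick one $x_0\in Z$ at which the Euler--Lagrange identity holds pointwise; your first step correctly shows that for a.e.\ direction $\theta$, $f$ vanishes $\lambda_\theta$-a.e.\ on the plane through $x_0$ with direction $\theta$. Now push forward the measure $d\gamma(\theta)\,d\lambda_\theta(w)$ under $(\theta,w)\mapsto x_0+w$: its image has density $c\,|y-x_0|^{k-n}$ with respect to Lebesgue measure (this is Fuglede's formula for $T_{k,n}^*T_{k,n}$), which is strictly positive for $y\neq x_0$, so the vanishing of $f$ on a.e.\ plane through $x_0$ forces $f=0$ a.e.\ on $\R^n$. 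This is essentially the route the paper takes: it uses H\"older's inequality to get the pointwise bound $f(x)\geq C\lambda\bigl(T_{k,n}^*T_{k,n}f(x)\bigr)^{p_0q_0}$ from \eqref{EL}, writes $T_{k,n}^*T_{k,n}f(x)=c\int f(y)|y-x|^{k-n}\,dy$ by Fuglede's formula, and concludes from the strict positivity of the kernel that $f(x_0)=0$ at a single point already forces $f\equiv 0$ a.e. Your opening reduction (vanishing of $f$ along a.e.\ plane through a zero of $f$) is sound and morally equivalent, but without the weighted Fubini/kernel step the global incidence bookkeeping cannot be made to work.
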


The proof relies on the following lemma. 

\begin{lem} For any nonnegative solution $f(x)\in L^p(\R^n)$ of \eqref{EL} with $q\geq2 $, \\
$f(x)\geq C(\lambda) (T_{k,n}^*T_{k,n}f(x))^{p_0q_0}$ almost everywhere. \end{lem}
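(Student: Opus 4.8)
The strategy is to exploit the fact that the Euler--Lagrange relation \eqref{EL} writes $f$ as $\lambda$ times a $p_0$-th power of $T_{k,n}^*[(T_{k,n}f)^{q_0}]$, and to bound the inner quantity below by something involving only $T_{k,n}^*T_{k,n}f$. First I would note that since $q\geq 2$ we have $q_0=q-1\geq 1$, so the map $t\mapsto t^{q_0}$ is convex on $[0,\infty)$ (or at least superadditive in the appropriate sense). The key point is that $T_{k,n}f$ is an average-type operator: $T_{k,n}f(\theta,y)=\int_\theta f(x+y)\,d\lambda_\theta(x)$ is an integral of $f$, and $T_{k,n}^*$ integrates a function on $\mathcal{M}_{k,n}$ against the natural measure. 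Writing $g=T_{k,n}f\geq 0$, we have
\[
T_{k,n}^*[g^{q_0}](x)=\int (\text{kernel})(x,\theta,y)\,g(\theta,y)^{q_0}\,d\mu(\theta,y),
\]
and I want a lower bound of the form $T_{k,n}^*[g^{q_0}](x)\gtrsim \big(T_{k,n}^*[g](x)\big)^{q_0}$ pointwise, up to a constant. This is exactly a reverse-Jensen-type statement: it holds \emph{provided} $T_{k,n}^*$ has an appropriate normalization, which it does not literally (the total mass of the adjoint kernel at a point is infinite), so one must be more careful.

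The correct route, I expect, is localization. Fix $x$ and restrict the $\theta$-integration in $T_{k,n}^*$ to a set of $k$-planes through a fixed ball, or more precisely insert a cutoff making the adjoint kernel integrable, and apply Jensen's inequality there; since $q_0\geq 1$, convexity of $t\mapsto t^{q_0}$ combined with the probability measure obtained after normalizing the cutoff kernel yields
\[
T_{k,n}^*[g^{q_0}](x)\;\geq\; c\,\big(T_{k,n}^*[g](x)\big)^{q_0}
\]
with $c$ a fixed positive constant depending only on $n,k$ (the cutoff can be chosen uniformly because of the translation/dilation structure, or absorbed into $C(\lambda)$). Raising both sides to the power $p_0>0$ and using \eqref{EL} gives
\[
f(x)=\lambda\big(T_{k,n}^*[g^{q_0}](x)\big)^{p_0}\;\geq\;\lambda c^{p_0}\big(T_{k,n}^*[g](x)\big)^{p_0 q_0}=C(\lambda)\big(T_{k,n}^*T_{k,n}f(x)\big)^{p_0 q_0},
\]
which is the claimed inequality, since $g=T_{k,n}f$ and $p_0=\frac{1}{p-1}$.

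The main obstacle is making the reverse-Jensen step rigorous despite the adjoint kernel not being a finite measure: one cannot simply apply Jensen to $T_{k,n}^*$ directly, so the argument must either (i) cut off to a compact set of planes, normalize, apply Jensen with the convex function $t^{q_0}$, and then show the cutoff constant can be taken uniform in $x$ (using affine invariance of \eqref{main} to reduce to $x=0$), or (ii) use the explicit form of $T_{k,n}^*T_{k,n}$ as a fractional-integral/Riesz-potential-type operator and a pointwise comparison. Either way, the heart of the matter is the convexity inequality $\int g^{q_0}\,d\nu \geq (\int g\,d\nu)^{q_0}$ for a probability measure $\nu$ when $q_0\geq 1$, together with verifying that the normalization constants do not degenerate; the remaining manipulations (raising to the power $p_0$, substituting into \eqref{EL}, collecting constants into $C(\lambda)$) are routine. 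Once this lemma is in hand, positivity of $f$ follows by a standard connectedness/propagation argument: $T_{k,n}^*T_{k,n}f(x)>0$ as soon as $f$ is positive on a set of positive measure meeting enough $k$-planes through $x$, and the lemma then forces $f>0$ there, bootstrapping to all of $\R^n$.
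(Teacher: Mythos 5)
Your overall strategy (convexity of $t\mapsto t^{q_0}$ for $q_0\geq1$, a reverse-Jensen bound $T_{k,n}^*[g^{q_0}]\gtrsim(T_{k,n}^*g)^{q_0}$, then raise to the power $p_0$ and insert \eqref{EL}) is exactly the right idea, but the step you single out as the ``main obstacle'' rests on a false premise, and your proposed fix would not deliver the stated bound. The adjoint of the $k$-plane transform is not integration against an infinite-mass kernel: in the natural parameterization it is
\begin{equation*}
T_{k,n}^*g(x)=\int_{\mathcal{G}_{k,n}} g\bigl(\theta,P(x,\theta^\perp)\bigr)\,d\gamma(\theta),
\end{equation*}
where $P(x,\theta^\perp)$ is the projection of $x$ onto $\theta^\perp$ and $d\gamma$ is the \emph{probability} Haar measure on $\mathcal{G}_{k,n}$. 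So at each fixed $x$ one is averaging over a probability measure, and H\"older (equivalently Jensen) with exponent $q_0\geq1$ applies directly, with constant depending only on $n,k$; no cutoff, normalization, or uniformity argument is needed. (The infinite-mass phenomenon you have in mind belongs to the composed operator $T_{k,n}^*T_{k,n}$, whose kernel is the Riesz-type potential $|x-y|^{k-n}$, not to $T_{k,n}^*$ itself.)

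Moreover, the localization you sketch in route (i) would not close the gap even if it were needed: restricting the $\theta$-integration to a compact subset $S\subset\mathcal{G}_{k,n}$ and applying Jensen there gives a lower bound by $\bigl(\int_S T_{k,n}f(\theta,P(x,\theta^\perp))\,d\gamma(\theta)\bigr)^{q_0}$, which is a \emph{restricted} adjoint applied to $T_{k,n}f$; since the restricted integral is smaller than the full one and you need a lower bound, you cannot replace it by $\bigl(T_{k,n}^*T_{k,n}f(x)\bigr)^{q_0}$ without an additional comparability argument that is not automatic. The correct (and much shorter) argument is the paper's: write $f(x)=\lambda\bigl(\int_{\mathcal{G}_{k,n}}[T_{k,n}f(\theta,P(x,\theta^\perp))]^{q_0}d\gamma(\theta)\bigr)^{p_0}$, apply H\"older on the probability space $(\mathcal{G}_{k,n},d\gamma)$ to get $\int[T_{k,n}f]^{q_0}d\gamma\geq\bigl(\int T_{k,n}f\,d\gamma\bigr)^{q_0}$, and raise to the power $p_0>0$, noting the identity holds almost everywhere since \eqref{EL} is an identity in $L^p$.
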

\begin{proof}
	Let $d\theta$ be the unique Haar probability measure on $\mathcal{G}_{k,n}$ and $P(x,\theta^\perp)$ be the projection of $x$ onto $\theta^\perp$, the orthogonal complement  of $\theta$. Then, writing out $T^*_{k,n}$ explicitly, 
	$$f(x)=\lambda\left(\int_{\mathcal{G}_{k,n}}[T_{k,n}f(\theta,P(x,\theta^\perp))]^{q_0}d\theta\right)^{p_0}.$$
	 As $q_0=q-1=n\geq1$,  H\"older's inequality applies. 
	$$\int_{\mathcal{G}_{k,n}}g(\theta)d\theta\leq \left( \int_{\mathcal{G}_{k,n}}g(\theta)^{q_0}d\theta\right)^{1/q_0} 			\left(\int_{\mathcal{G}_{k,n}}1d\theta\right)^{1/q_0'}=C\left( 										\int_{\mathcal{G}_{k,n}}g(\theta)^{q_0}d\theta\right)^{1/q_0}.$$ 
Thus, $\int_{\mathcal{G}_{k,n}}[T_{k,n}f(\theta,P(x,\theta^\perp))]^{q_0}d\theta\geq\left(\int_{\mathcal{G}_{k,n}}(T_{k,n}f(\theta,P(x,\theta^\perp))d\theta\right)^{q_0}$. 
	 As $p=\frac{n+1}{k+1}\geq1$, $p_0=\frac{1}{p-1}>0$ and therefore
	$$f(x)\geq C\lambda\left(\int_{\mathcal{G}_{k,n}}[(T_{k,n}f(\theta,P(x,\theta^\perp))]\;d\theta\right)^{p_0q_0}.$$
	Again applying the definition of  $T^*_{k,n}$ proves the statement, with the qualification that as our function satisfies \eqref{EL} with equality in $L^p$, the statement holds only almost everywhere. 
	\end{proof}

\begin{proof}[Proof of Proposition] 
Writing out  $T_{k,n}^*T_{k,n}$ using Fuglede's formula \cite{F58}
$$f(x)\geq  C\lambda\left(\int f(y)|y-x|^{k-n}dx\right)^{p_0q_0}.$$
If there is a set of positive measure on which $f(x)=0$ then for some $x_0$ , 
$$ C\lambda(\int f(y)|y-x_0|^{k-n}dx)^{p_0q_0} =0.$$ 
$$ \int f(y)|y-x_0|^{k-n}dx =0.$$ 
As $|y-x_0|^{k-n}$ is positive except at $y=x_0$, $f(y)=0$ almost everywhere. 
\end{proof}

\end{section}

\begin{section} {Acknowledgments}
The author would like to thank Michael Christ for suggesting the problem and his guidance, and Alexis Drouot for insightful discussion. 
\end{section}

\bibliographystyle{amsplain}	
\bibliography{KPbib}

\end{document}